\documentclass[11pt,letterpaper]{article}
\usepackage{nsd, xspace, thmtools}
\usepackage{refcount}
\crefname{fact}{Fact}{Facts}
\Crefname{fact}{Fact}{Facts}

\newcommand{\Vladut}{Vl\u{a}du\c{t}\xspace}
\DeclareMathOperator{\CVP}{CVP}

\title{Lattices with Large Kissing Number from Local Density}

\author{Huck Bennett\thanks{University of Colorado Boulder. \email{huck.bennett@colorado.edu}. Supported in part by NSF Award No. 2432132.} \and 
Minki Hhan\thanks{Korea Advanced Institute of Science and Technology. \email{minkihhan@kaist.ac.kr}.}
\and
Zihan Li\thanks{National University of Singapore and Cornell University. \email{zihan\_li\_05@u.nus.edu}.}
\and Noah Stephens-Davidowitz\thanks{Cornell University. \email{noahsd@gmail.com}. Supported in part by NSF Grant No.~CCF-2312296, a Packard Foundation Fellowship, and a generous gift from Google.}}

\date{October 9, 2026}

\begin{document}

\maketitle

\begin{abstract}
Let $\kappa_p(n)$ denote the maximum kissing number of a lattice $\lat \subset \R^n$.  
We prove that $\kappa_p(n) \geq 2^{\Omega(\sqrt{n})}$ for all $p\ge 1$ and that $\kappa_p(n) \geq \Omega(\alpha_p^n)$ for all $p > 2$, where $\alpha_p > 1$ is a constant depending only on $p$. In particular, our former result improves on the previously best known lower bound of $\kappa_2(n) \geq 2^{\Theta(\log^2 n)}$ for the $\ell_2$ kissing number, which has stood for over $60$ years.
(In concurrent and independent work, Laarhoven and Kominers [arXiv, 2026] showed a similar lower bound on $\kappa_2(n)$ using similar techniques.)
In fact, our lower bounds follow from \emph{efficient} (randomized) constructions of lattices with large kissing number.

Our results use a general construction of lattices with large kissing number.
Specifically, we show a simple equivalence between such lattices and a new object that we call an \emph{exact locally dense lattice}.
Closely related objects appear throughout the literature on computational lattice problems.

Our constructions of lattices with large kissing number then follow as corollaries by showing that constructions due to Bennett and Peikert [RANDOM, 2023] (using Craig lattices) and Aggarwal and Stephens-Davidowitz [STOC, 2018] (using sparsification of $\Z^n$) in fact give exact locally dense lattices, as well as a new construction of exact locally dense lattices built from rotations of the $m$-dimensional checkerboard lattice $D_m$.
\end{abstract}

\mytableofcontents

\section{Introduction}

A lattice $\lat \subset \R^m$ is the set of all integer linear combinations of $n$ linearly independent vectors $\vec{b}_1, \ldots, \vec{b}_n$. That is, $\lat = \lat(\basis) := \{\sum_{i=1}^n a_i \vec{b}_i : a_1, \ldots, a_n \in \Z\}$, where $\basis := (\vec{b}_1, \ldots, \vec{b}_n)$ is a basis of $\lat$. The \emph{rank} $\rank(\lat)$ of $\lat$ is the number $n$ of vectors in a basis of $\lat$.

The minimum distance of a lattice in the $\ell_p$ norm is $\lambda_1^{(p)}(\lat) := \min_{\vec{x} \in \lat \setminus \{\vec{0}\}} \|\vec{x}\|_p$, where $\|\cdot\|_p$ is the $\ell_p$ norm.
A fundamental geometric quantity associated with a lattice $\lat$ is its \emph{kissing number} in the $\ell_p$ norm, which is defined as
\[
\kappa_p(\lat) := |{\{\vec{x} \in \lat : \|\vec{x}\|_p = \lambda_1^{(p)}(\lat)}\}| \ \text{.}
\]
That is, $\kappa_p(\lat)$ is the number of shortest non-zero vectors in $\lat$ in the $\ell_p$ norm.
We extend this notation and define the \emph{lattice kissing number}
\[
\kappa_p(n) := \max_{\lat \, : \, \rank(\lat) = n} \kappa_p(\lat)
\]
to be the maximum $\ell_p$ kissing number of a lattice $\lat \subset \R^m$ of rank $n$ (in any ambient dimension $m$).
The focus of this work is on giving an asymptotic lower bound on $\kappa_p(n)$ for different values of $p$.%
\footnote{The case when $p=\infty$ is already resolved. Specifically, $\kappa_\infty(n)=3^n-1$, and this is achieved by $\Z^n$.
To show that $\kappa_\infty(n) \leq 3^n-1$, we note that the shortest non-zero vectors of $\lat$ must lie in distinct, non-zero cosets modulo $3 \lat$. Indeed, suppose that vectors $\vec{x}, \vec{y} \in \lat$ are such that $\norm{\vec{x}}_{\infty} = \norm{\vec{y}}_{\infty} = \lambda_1^{(\infty)}(\lat)$ and that $\vec{x} = \vec{y} + 3 \vec{v}$ for $\vec{v} \in \lat \setminus \set{\vec{0}}$.
Then by triangle inequality, $\lambda_1^{(\infty)}(\lat) = \|\vec{x} \|_\infty \ge 3\|\vec v\|_\infty-\|\vec y\|_\infty\ge 2 \lambda_1^{(\infty)}(\lat)$, which is a contradiction.}

There is substantial interest both in the value of $\kappa_p(n)$ for fixed, small values of $n$ and in the asymptotic behavior of $\kappa_p(n)$, especially in the important Euclidean case of $p = 2$. See, e.g.,~\cite[Chapter 1, Section 2]{CS99} and~\cite{alonPackingsLargeMinimum1997}.
Prior to this work, the best known lower bound on $\kappa_2(n)$ was the quasipolynomial bound $\kappa_2(n) \geq 2^{(1/2 + o(1)) \cdot \log_2(n)^2} = 2^{\Theta(\log^2 n)}$ shown by Leech~\cite{Leech1964SpherePackings} for Barnes-Wall lattices~\cite{barnesExtremeFormsDefined1959}; see also~\cite[Chapter 1, Equation 50]{CS99}.
(A series of recent works~\cite{Vladut-l2kissnum-2019,Vladut-lpkissnum-2021,vladut24} claimed to show a family of lattices with exponentially large kissing number. 
Specifically, \cite{Vladut-l2kissnum-2019,vladut24} claimed to prove that $\kappa_2(n) \geq 2^{\Omega(n)}$ and \cite{Vladut-lpkissnum-2021} claimed to show that $\kappa_p(n) \geq 2^{\Omega(n)}$ for all $p$. Unfortunately, these works were found to be invalid in~\cite{journals/tit/BennettGS25} and were subsequently withdrawn.)

The analogous question of whether there exists a family of binary linear codes with exponential kissing number was asked in~\cite{kalaiDistanceDistributionCodes1995} and answered in the affirmative by Ashikhmin, Barg, and \Vladut in~\cite{Ashikhmin-Barg-Vladut-Exp-Kiss-Codes-01}.
Alon~\cite{alonPackingsLargeMinimum1997} gave a non-lattice packing of unit spheres in $\R^n$, where each sphere touches at least $2^{\sqrt{n}}$ others. 
We note also that Aggarwal and Stephens-Davidowitz~\cite{AS18} gave complexity-theoretic applications of lattices with exponential kissing number. They showed that, assuming a generic complexity-theoretic assumption (namely, non-uniform Gap-ETH), the existence of a family of lattices with exponential Euclidean kissing number implies there is no $2^{o(n)}$-time algorithm for the computational lattice problem $\SVP_p$ for any $1 \le p \le 2$.\footnote{$\SVP_p$ is the Shortest Vector Problem in the $\ell_p$ norm, i.e., the problem of computing a vector $\vec{v} \in \lat$ in a given lattice $\lat$ with $\norm{\vec{v}}_p = \lambda_1^{(p)}(\lat)$.}
Finally, Bennett, Peikert, and Tang~\cite{conf/innovations/BennettPT22} showed how exponential kissing number lattices can be transformed into locally dense lattice gadgets. In this paper, we are working in the reverse direction, i.e., converting suitable locally dense lattices into lattices with large kissing number. 
 (We make the close relationship between locally dense lattices with many close vectors and lattices with large kissing number precise in \cref{thm:LDL-KN-equivalence}.)

\subsection{Our contribution}
\label{sec:our_constribution}

In terms of $\kappa_p(n)$ itself, our main result is a new asymptotic lower bound on $\kappa_p(n)$ for all $\ell_p$. That is, for every $p \geq 1$ and every sufficiently large $n$, we construct a lattice $\lat$ of rank $n$ (and ambient dimension $n$) and prove that $\kappa_p(\lat)$ is (much) larger than what prior work was able to show.

\begin{theorem}[Informal; see \cref{cor:craig kn,thm:last-one}] \label{thm:kn-lower-bounds-main} \ 
    \begin{enumerate}
        \item \label{item:main_theorem_subexp}
    For all $p \geq 1$, there exists a lattice $\lat \subset \R^n$ with rank $n$ such that 
    \[
        \kappa_p(\lat) \geq e^{\sqrt{n} - o(\sqrt{n})}
        \; .
    \]
    \item \label{item:main_theorem_exp} For all $p > 2$ there exists a lattice $\lat \subset \R^n$ with rank $n$ such that
    \[
        \kappa_p(\lat) \geq \Omega(\alpha_p^n)
        \; ,
    \]
    for some (efficiently computable) constant $\alpha_p > 1$ depending only on $p$. Furthermore,  $\lim_{p \to \infty} \alpha_p = 2$.
    \end{enumerate}
\end{theorem}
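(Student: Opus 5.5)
We will reduce both parts to constructing \emph{exact locally dense lattices}: a lattice $\lat' \subset \R^{m}$ with $\lambda_1^{(p)}(\lat') \ge r$ and a target $\vec{t}$ such that exponentially (or subexponentially) many lattice vectors satisfy $\|\vec{y}-\vec{t}\|_p = r'$ exactly, where $r'$ is the distance from $\vec{t}$ to $\lat'$ and $r'$ is slightly smaller than $r$ in a suitable sense. The plan is to first prove the equivalence announced as \cref{thm:LDL-KN-equivalence}. From $(\lat',\vec{t})$ we form $\lat := \lat(\basis', \vec{t})$ together with an extra coordinate. Concretely, the generators are $(\vec{b}_i, 0)$ and $(\vec{t}, s)$, with $s$ chosen so that every vector $(\vec{y}-\vec{t}, -s)$ with $\vec{y}$ among the exact closest vectors has $\ell_p$ norm exactly $(r'^p+s^p)^{1/p} = r$.

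We then check that all other nonzero lattice vectors have norm at least $r$. Vectors of the form $(\vec{y},0)$ are handled by $\lambda_1^{(p)}(\lat')\ge r$. Vectors with coefficient $k$ on $\vec{t}$, for $|k|\ge 2$, have last coordinate of size $|k|s$, which exceeds $r$ once $s$ is large enough relative to $r'$. This forces the tradeoff $r' \lesssim r \cdot (1-2^{-p})^{1/p}$ roughly, so the locally dense object needs only a mild gap. The kissing number is then at least twice the number of exact closest vectors. If $\lat$ is not full rank we restrict to its span, which preserves rank $n$ up to an additive constant. The dimension is padded or absorbed into $o(\sqrt n)$ or $\Omega(\alpha_p^n)$.

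For \cref{item:main_theorem_subexp}, we instantiate with the Bennett--Peikert construction from Craig lattices. Craig's lattice $(1-\zeta)^{k}\Z[\zeta]$ in the cyclotomic field of prime degree has large minimum distance in every $\ell_p$. One takes $\vec{t}$ to be a suitable scaled all-ones-type target, for which the nearest vectors correspond to $0/1$ combinations with prescribed weight. All such vectors have identical $\ell_p$ distance, since their coordinates lie in $\{0,1\}$ after the shift and the distance depends only on the Hamming weight. This is exactly what makes ``exact'' local density hold simultaneously for every $p$. Counting weight-$w$ patterns gives $\binom{N}{w}$ close vectors. Optimizing $w\approx\sqrt{N}$ against the dimension blowup should give $e^{\sqrt n - o(\sqrt n)}$. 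The main obstacle is verifying that \emph{no} lattice vector is strictly closer to $\vec{t}$ than $r'$, and that the minimum distance bound of Craig lattices holds in $\ell_p$ for all $p\ge1$. We would first try bounding $\lambda_1^{(p)}$ via the $\ell_1$ norm and the ideal norm $|N((1-\zeta)^k)|$ using AM--GM, and then interpolating.

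For \cref{item:main_theorem_exp}, we use sparsification of $\Z^n$ in the style of Aggarwal--Stephens-Davidowitz. Take $\vec{t}=\tfrac12\vec{1}$; all $2^n$ points of $\{0,1\}^n$ are exactly at distance $\tfrac12 n^{1/p}$. For $p>2$, short vectors of $\Z^n$ of $\ell_p$ norm below roughly $(n/2^{p-1})^{1/p}$ are exponentially fewer than $2^n$. We pass to a random sublattice $\{\vec{x}\in\Z^n : \langle \vec{z},\vec{x}\rangle \equiv 0 \bmod q\}$, possibly shifted, which kills all short nonzero vectors while keeping roughly a $1/q$ fraction of the $\{0,1\}^n$ points. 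A shifted target can be absorbed into $\vec{t}$. This yields $\alpha_p$ as the ratio $2$ over the growth rate of the count of short integer vectors, which tends to $2$ as $p\to\infty$. An alternative or improvement is to use rotated copies of $D_m$ in blocks. The main work here is a counting estimate, via a saddle-point or theta-function bound, on the number of $\vec{x}\in\Z^n$ with $\|\vec{x}\|_p^p\le n/2^{p-1}$.
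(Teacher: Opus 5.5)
\textbf{There is a genuine gap, and it is in your reduction.} Appending a coordinate $s$ with $2s\ge r$ forces $\lambda_1^{(p)}(\lat')\ge r=(r'^p+s^p)^{1/p}$. That is a constant-factor gap, $\lambda_1^{(p)}(\lat')\ge(1-2^{-p})^{-1/p}\dist_p(\vec t,\lat')$, which is a factor $2$ when $p=1$. The constructions that actually reach the claimed bounds have no gap at all.

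\emph{Part 1.} For the Craig lattice, the only available bound is $\lambda_1^{(p)}(A_{q-1}^{(m)})\ge(2m)^{1/p}$. This is exactly the distance from a weight-$2m$ binary target to its binary close vectors. So your vectors $(\vec y-\vec t,-s)$, of norm $(2m+s^p)^{1/p}$, need not be shortest. Suppose instead you lower the weight to $w\le 2(1-2^{-p})m$ to manufacture the gap. Then the pigeonhole count $\binom{q}{w}/q^{m-1}$ is at most $e^{O(q^{1-1/c}+\log q)}$ with $c=2(1-2^{-p})<2$, which is $e^{o(\sqrt q)}$.

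The missing idea is that no extra coordinate is needed. The paper's definition requires each $k\vec t$ either to lie in $\lat$ or to be no closer to $\lat$ than $\vec t$. This makes $\lambda_1^{(p)}(\lat+\Z\vec t)=\dist_p(\vec t,\lat)$ exactly, with no gap. For the Craig lattice the condition is a one-line estimate, using $\sum_i v_i=0$:
$\|\vec v-k\vec t\|_p^p\ge\|\vec v-k\vec t\|_1\ge\bigl|\sum_i(v_i-kt_i)\bigr|=2mk$.

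Other parts of your Part 1 sketch are also off:
\begin{itemize}
\item An all-ones-type target is orthogonal to the span of $A_{q-1}^{(m)}$, so it is useless.
\item The count is not $\binom{N}{w}$. The target $\vec t$ must be a weight-$2m$ binary vector chosen by pigeonhole over the $q^{m-1}$ values of $\vec s\mapsto(\sum_i s_i i^j \bmod q)_{1\le j<m}$. This gives $\binom{q}{2m}/q^{m-1}=\Omega(e^{\sqrt q}q^{3/4})$ for $m=\lfloor\sqrt q/2\rfloor$.
\item AM--GM against the ideal norm $q^m$ only yields $\|\vec x\|_2^2\ge(1-1/q)\,q^{2m/(q-1)}=1+o(1)$. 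So the minimum-distance bound must be cited. Its $\ell_p$ form follows from the $\ell_1$ one, since $\|\vec x\|_p^p\ge\|\vec x\|_1$ on $\Z^q$.
\end{itemize}

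\emph{Part 2.} The same gap requirement sinks this part. You claim $\Z^n$ has exponentially fewer than $2^n$ points of $\ell_p$ norm below $(n/2^{p-1})^{1/p}$. This is false for $2<p\lesssim 3.1$. The count is $\bigl(\min_{\tau>0}e^{\tau/2^{p-1}}\Theta_p(\Z,\tau)\bigr)^{n-o(n)}$, and at $p=3$ this base is at least $\tfrac43\cdot 6^{1/4}\approx 2.09$. Even at radius exactly $n^{1/p}/2$, i.e.\ with no gap, $\Z^n$ works only for $p>p_0\approx 2.14$.

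Covering all $p>2$ is the substantive part of the paper's argument, and you only name it. The steps are:
\begin{itemize}
\item Take blocks $\mathcal{M}_m=H_mD_m$ with target $H_m\vec e_1=\vec 1$.
\item Use $\|H_m\vec x\|_p\ge m^{1/p}\|\vec x\|_2$ to show that the $2m$ vectors $H_m(\vec e_1\pm\vec e_i)$ remain the closest, at distance exactly $m^{1/p}$.
\item Bound $\Theta_p(\mathcal{M}_m,\tau)\le\Theta_2(D_m,\tau m/\mu)$ with $\mu=2^{1-p/2}<1$.
\item Take $\tau=\mu\ln m/m$ to get $\alpha_{p,m}^m\ge 2m^{1-\mu}/9>1$ for a large enough power of two $m$.
\end{itemize}
Sparsification is then done at radius equal to the distance. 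The target is shifted by a specific fixed lattice vector, not an arbitrary one, so that $2\vec t'$ stays in the sparsified lattice; this is what makes the condition on multiples of the target hold. With your gap factor $2^{1/p}$, the same theta estimate gives only $2m^{1-2\mu}/9$, which is useless unless $p>4$.
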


In particular, the result in \cref{item:main_theorem_exp} that holds for $p > 2$ is tight up to the constant $\alpha_p$, since it is easy to show (e.g., via a packing bound) that $\kappa_p(n) \leq 2^{O(n)}$. So, we prove that the kissing number $\kappa_p(n)$ is $2^{\Theta(n)}$ for all $p > 2$.  In fact, $\kappa_p(n) \leq O(2^n)$ for all finite $p \geq 1$, so that as $p \to \infty$, \cref{item:main_theorem_exp} is quite tight.\footnote{See~\cite{millerKissingNumbersTransference2019,millerKissingNumberCrosspolytope2025} for the best known upper bounds on the constant in the exponent for $p \neq 2$. In particular, \cite{millerKissingNumberCrosspolytope2025} proves that $\kappa_1(n) \leq 12(2^n-1)$ and it is trivial to see that $\kappa_p(n) \leq 2 (2^n-1)$ for any finite $p > 1$. See \cite{kabatjanskiiBoundsPackingsSphere1978} for the best known upper bound for $p = 2$.} 

For $p=2$ (and more generally $1 \leq p \leq 2$), we do not know if our result is tight, but it is far stronger than the prior best lower bound of $\kappa_2(n) \geq 2^{\Omega(\log(n)^2)}$~\cite{Leech1964SpherePackings}. In independent work that appeared as a preprint before the present work was public, Laarhoven and Kominers proved essentially the same result for $p = 2$ (with a slightly better constant in the exponent) using essentially the same construction~\cite{laarhoven2026latticefamilykissingconstant}. (See \cref{sec:concurrent}.) We do not know of any prior lower bounds for $p \neq 2$ besides the rather simple lower bound of $2n(n-1)$ achieved by the lattice $D_n$.

We note that our constructions of lattices with large kissing number actually show a \emph{randomized} $\poly(n)$-time procedure that outputs a basis of a lattice $\lat \subset \R^n$ that has the desired kissing number with high probability. (In fact, a single lattice $\lat \subset \R^n$ achieves $\kappa_p(\lat) \geq e^{\sqrt{n} - o(\sqrt{n})}$ simultaneously for all $p \geq 1$ with high probability.) One might wish for an efficient deterministic procedure that outputs a basis for the lattice. As an additional contribution, we show (in \cref{cor:explicit-construction}) a lattice $\lat \subset \R^n$ with a very simple description (a Craig lattice with one explicit vector added to its basis) that achieves $\kappa_p(\lat) \geq e^{\Omega(n^{1/4}\log n)}$ for all $p \geq 1$. (Here, again, a single lattice works simultaneously for all $p$.)

Our proof of \cref{thm:kn-lower-bounds-main} uses a new careful definition of objects that we call \emph{exact locally dense lattices}. This definition is our second contribution. As the name suggests, exact locally dense lattices are slight variants of well-studied objects called \emph{locally dense lattices}, which are ubiquitous in the study of the complexity of computational problems on lattices (as we discuss below).

\begin{definition}[Exact locally dense lattice] \label{def:locally-dense-lat}
For $p \geq 1$ and $n, N \in \Z^+$, a \emph{$(p, n, N)$-exact locally dense lattice} consists of a lattice $\lat \subset \R^m$ of rank $n$ and a target vector $\vec{t} \in \R^m \setminus \lat$ satisfying the following properties.
\begin{enumerate}    \item (Many closest vectors.) $|\{\vec{x} \in \lat : \|\vec{x} - \vec{t}\|_p = \dist_p(\vec{t}, \lat)\}| \geq N$. 
    \item \label{item:distance_less_than_lambda1} (Distance to $\vec{t}$ at most $\lambda_1$.) $\dist_p(\vec{t}, \lat) \le \lambda_1^{(p)}(\lat)$.
    \item \label{local-dense-item-three} (Distance of integer multiples $k\vec{t}$.) For every integer $k$, either $k\vec{t} \in \lat$ or $\dist_p(\vec{t}, \lat)\le \dist_p(k\vec{t},\lat)$. 
\end{enumerate}
\end{definition}
That is, there are at least $N$ closest vectors in $\lat$ to $\vec{t}$, and these vectors are at distance at most $\lambda_1^{(p)}(\lat)$ to $\vec{t}$. Moreover, we require integer multiples $k\vec{t}$ of the target vector $\vec{t}$ to be no closer than $\vec{t}$ to the lattice $\lat$ (excluding the degenerate case when $k \vec{t} \in \lat$, which includes when $k = 0$).

One can easily check that \cref{item:distance_less_than_lambda1,local-dense-item-three} of \cref{def:locally-dense-lat} are precisely equivalent to the statement that $\lambda_1^{(p)}(\lat + \Z\vec{t}) = \dist_p(\vec{t},\lat)$. (Here, we are using the fact that $\lat + \Z \vec{t}$ is in fact a lattice. One can easily show that \cref{local-dense-item-three} implies this, as we do in \cref{lem:well-definedness of lattice}.) So, together the conditions in \cref{def:locally-dense-lat} are precisely equivalent to the statement that there are at least $N$ distinct vectors of the form $\vec{y} - \vec{t} \in \lat + \Z \vec{t}$ such that $\|\vec{y} - \vec{t}\|_p = \lambda_1^{(p)}(\lat + \Z \vec{t})$. (Of course, we have chosen the definition to satisfy this.) From this, we immediately conclude that $\kappa_p(\lat + \Z \vec{t}) \geq N$.\footnote{The mapping $(\lat, \vec{t}) \mapsto \lat + \Z \vec{t}$ for a vector $\vec{t}$ is very closely related to Kannan's embedding.
More precisely, ``Kannan's embedding'' usually refers to the mapping
\[
\basis, \vec{t} \mapsto \begin{pmatrix} \basis & \vec{t} \\ \vec{0} & u \end{pmatrix}  \ \text{,}
\]
where $\basis$ is a lattice basis, $\vec{t}$ is a (target) vector, and $u > 0$ is an orthogonal component. With our requirements for $\vec{t}$, appending the orthogonal component $u$ is unnecessary.} (See \cref{thm:kannans-embedding-formal} for a formal statement of this.)

So, with the right definition in hand, it is immediate that in order to construct lattices with large kissing number, it suffices to construct an exact locally dense lattice. In fact, we show that exact locally dense lattices can themselves be constructed from lattices with high kissing number, so that the two objects are equivalent up to a small loss in parameters. Specifically, we have the following result, which we prove in \cref{app:going_backwards}.

\begin{theorem} \label{thm:LDL-KN-equivalence}
    For any $p$ and $n$, let $N_p(n)$ be the maximum value such that there exists a $(p,n,N_p(n))$-exact locally dense lattice. Then,
    \[
        \kappa_p(n)/2 < N_p(n) \leq \kappa_p(n+1)
        \; .
    \]
\end{theorem}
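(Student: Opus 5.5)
The two inequalities are proved separately.

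\emph{Upper bound $N_p(n) \le \kappa_p(n+1)$.} Take a $(p,n,N)$-exact locally dense lattice $(\lat,\vec t)$. By \cref{lem:well-definedness of lattice}, $\lat' := \lat + \Z\vec t$ is a lattice. Its rank is $n+1$ or $n$: it is $n$ exactly when $\vec t$ lies in the span of $\lat$, which happens because some multiple $k\vec t$ with $k\neq0$ lies in $\lat$. By the remark after \cref{def:locally-dense-lat} (formalized in \cref{thm:kannans-embedding-formal}), $\lambda_1^{(p)}(\lat') = \dist_p(\vec t,\lat)$. Indeed, a nonzero vector of $\lat'$ has the form $\vec x - k\vec t$. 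If $k\vec t\in\lat$, then this vector is a nonzero lattice vector and so has norm at least $\lambda_1 \ge \dist_p(\vec t,\lat)$. Otherwise its norm is at least $\dist_p(k\vec t,\lat)\ge \dist_p(\vec t,\lat)$. So the $N$ vectors $\vec y-\vec t$, with $\vec y$ ranging over the closest vectors to $\vec t$, are distinct shortest vectors of $\lat'$, and $\kappa_p(\lat')\ge N$.

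If $\rank(\lat')=n+1$ we are done. If $\rank(\lat') = n$, we still need $\kappa_p(n)\le\kappa_p(n+1)$. For this, take a rank-$n$ lattice $M$ achieving $\kappa_p(n)$ and form $M\oplus c\Z$ with $c$ larger than $\lambda_1^{(p)}(M)$. Its shortest vectors are those of $M$, so its kissing number equals $\kappa_p(M)$. This gives monotonicity, and the upper bound follows.

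\emph{Lower bound $\kappa_p(n)/2 < N_p(n)$.} Start from a rank-$n$ lattice $\lat'$ with $\kappa_p(\lat')=\kappa_p(n)=:K$ and let $S$ be its set of shortest vectors. Partition $S$ according to cosets of $2\lat'$. A standard argument shows that, for $p$ not forcing otherwise, shortest vectors in the same coset of $2\lat'$ are $\pm$ each other. Rather than rely on that, we use a different index-2 sublattice.

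Choose a primitive dual vector $\vec w\in(\lat')^*$ and set $\lat := \{\vec x\in\lat' : \langle \vec w,\vec x\rangle \equiv 0 \bmod 2\}$. Then $\lat$ has index $2$ in $\lat'$, and $\lat'=\lat\cup(\lat+\vec t)$ for any $\vec t$ in the odd coset. This $\lat$ has rank $n$, as required.

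Now we check the three conditions of \cref{def:locally-dense-lat}. Take $\vec t$ to be a shortest vector in the odd coset. Every $\vec s\in S$ in the odd coset satisfies $\vec s = \vec t - \vec x$ for some $\vec x \in \lat$, so the number of closest vectors to $\vec t$ is at least the number of odd-coset shortest vectors. The distance $\dist_p(\vec t,\lat)$ equals $\lambda_1(\lat')\le\lambda_1(\lat)$, giving the second condition. For the third, each $k\vec t$ is either in $\lat$ (for $k$ even) or in the odd coset of $\lat$, where every element has norm at least $\lambda_1(\lat')=\dist_p(\vec t,\lat)$.

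It remains to choose $\vec w$ so that more than $K/2$ shortest vectors are odd. Average over uniformly random $\vec w\in(\lat')^*/2(\lat')^*$, taken nonzero. Each fixed nonzero $\vec s\in S$ is odd for exactly half of all $\vec w$ mod 2, provided $\vec s\notin 2\lat'$. This holds because shortest vectors are never in $2\lat'$: if $\vec s = 2\vec v$, then $\vec v$ would be shorter. Restricting to nonzero $\vec w$ raises this fraction to $2^{n-1}/(2^n-1)>1/2$. Hence some $\vec w$ makes more than $K/2$ shortest vectors odd, which gives the strict inequality.

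The main obstacle is getting the strict inequality and the rank bookkeeping in the first part right; the averaging over nonzero $\vec w$ handles the strictness.
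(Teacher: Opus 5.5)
Your proposal is correct and follows essentially the paper's route. The upper bound is the embedding $\lat+\Z\vec t$ of \cref{thm:kannans-embedding-formal}, and the strict lower bound is the same averaging over a uniformly random nonzero index-two sublattice as in \cref{lem:new-KN-to-LDL}: your nonzero $\vec w \in (\lat')^*/2(\lat')^*$ is the paper's $\vec a\in\{0,1\}^n\setminus\{\vec 0\}$ written in basis coordinates. You are also slightly more careful than the paper's one-line proof, since you handle the case $\vec t\in\spn(\lat)$ explicitly. There $\lat+\Z\vec t$ has rank $n$, so one needs the padding/monotonicity fact $\kappa_p(n)\le\kappa_p(n+1)$, which the paper leaves implicit.
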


\paragraph{The usefulness of exact locally dense lattices.}
The reader might wonder whether this connection between exact locally dense lattices and the lattice kissing number is useful.
In fact, it is. Locally dense lattices (though not necessarily \emph{exact} locally dense lattices) have been used extensively in the literature on computational lattice problems. More importantly from our perspective, there are by now many different known constructions of locally dense lattices~\cite{ajtaiGeneratingHardInstances1996,caiApproximatingSVPFactor1999,ajtaiGeneratingHardInstances1999,micciancioShortestVectorLattice2000,khotHardnessApproximatingShortest2005,liuBoundedDistanceDecoding2006a,journals/toc/Micciancio12,conf/coco/Micciancio14,AS18,bennettHardnessBoundedDistance2020a,bhattacharyyaParameterizedIntractabilityEven2021,conf/innovations/BennettPT22,conf/random/BennettP23,bennettParameterizedInapproximabilityMinimum2024,aggarwalMindGapNot2026,hiraharaComplexityLocallyDense2026,wan2026nphardnesssvpeuclideanspace,wanEuclideanSVPDeterministically2026}. 

Intuitively, a lattice-target pair $(\lat, \vec{t})$ is ``locally dense'' if there are many lattice vectors that are remarkably close to $\vec{t}$, and there are many different ways that this is made precise in the literature (often implicitly). The specific term ``locally dense lattice'' was coined by Micciancio~\cite{conf/coco/Micciancio14}, and we briefly explain how our definition of \emph{exact} locally dense lattices differs from Micciancio's definition in \cite{conf/coco/Micciancio14}. (A similar comparison applies to all variants of the definition in the literature.)
 The most significant difference between our definition and that of Micciancio is that we require $\lat$ to contain at least $N$ vectors whose distance to $\vec{t}$ is \emph{exactly} $\dist_p(\vec{t},\lat)$,
whereas~\cite{conf/coco/Micciancio14} just requires there to be $N$ vectors in $\lat$ that are within distance $r$ of $\vec{t}$ for some $\dist_p(\vec{t},\lat) \leq r < \lambda_1^{(p)}(\lat)$. (Equivalently, we require $r = \dist_p(\vec{t},\lat)$, whereas~\cite{conf/coco/Micciancio14} allows for larger $r$.) And, Micciancio did not just require that $r \leq \lambda_1^{(p)}(\lat)$ but instead was interested more generally in the ratio $\gamma := \lambda_1^{(p)}(\lat)/r \geq 1$ and particularly the case when $\gamma \geq 1 + \Omega(1)$ is bounded away from one. These two distinctions are why we use the qualifier ``exact'' in our definition of locally dense lattices.\footnote{Micciancio did not have any requirement analogous to \cref{local-dense-item-three} in \cref{def:locally-dense-lat}, but in applications of locally dense lattices in the literature, $\vec{t}$ is padded with an extra coordinate to effectively enforce \cref{local-dense-item-three}. This padding construction is less natural in our setting in which there is not necessarily any gap between $\lambda_1^{(p)}(\lat)$ and $\dist_p(\vec{t},\lat)$, which is why we simply included \cref{local-dense-item-three} in our definition.
The definition in~\cite{conf/coco/Micciancio14} also includes a matrix $\vec{T}$, which is irrelevant in our context, and the existence of many close vectors in his definition is in some sense witnessed by $\vec{T}$. (The image under $\vec{T}$ of the closest vectors is required to contain a large set.)}

\paragraph{Constructing exact locally dense lattices.}
So, our definition is not identical to the one used in prior work, but it is similar enough that we can easily show that some prior constructions of locally dense lattices in the literature in fact are \emph{exact} locally dense lattices. Specifically, our construction of lattices with $\kappa_p(\lat) \gtrsim e^{\sqrt{n}}$ for all $p$ then follows immediately from noting that the  locally dense lattices constructed by Bennett and Peikert~\cite{conf/random/BennettP23} are in fact already \emph{exact} locally dense lattices. (See \cref{sec:Craig}.) Our explicit deterministic construction of lattices with $\kappa_p(\lat) \gtrsim e^{n^{1/4} \log n}$ for all $p$ follows immediately from observing that Wan's recent breakthrough deterministic construction of locally dense lattices \cite{wan2026nphardnesssvpeuclideanspace} again actually gives exact locally dense lattices. (See \cref{sec:Wan}. In a bit more detail, to prove $\kappa_p(\lat) \gtrsim e^{n^{1/4} \log n}$, we prove a minor endpoint refinement of Wan's result. See \cref{app:Wan_stuff}.)

To obtain lattices $\lat \subset \R^n$ with exponentially large kissing number for $p >2$, we start with the construction of Aggarwal and Stephens-Davidowitz~\cite{AS18}, who essentially showed that a suitably chosen random sublattice of $\Z^n$ together with the target vector $\vec{t} = (1/2,\ldots, 1/2)$ forms a locally dense lattice for any $p > p_0 \approx 2.1397$ (using ideas from \cite{mazoLatticePointsHighdimensional1990,elkiesPackingDensitiesSuperballs1991} to study the geometry of $\Z^n$ and ideas from \cite{khotHardnessApproximatingShortest2005,dadushLatticeSparsificationApproximate2013} to study the geometry of the random sublattice). We show (in \cref{sec:Z^n-local-density}) how to use the same basic idea to obtain exact locally dense lattices for any $p > p_0$. More generally, we show in \cref{thm:sparsification} that any pair $\lat, \vec{t}$ satisfying a certain technical condition can be converted into an exact locally dense lattice. By plugging in $\Z^n$ and $\vec{t} = (1/2,\ldots, 1/2)^\top$, we show lattices with exponential kissing number exist for all $p > p_0$.\footnote{Admittedly, we are oversimplifying here. \cite{AS18} did not \emph{quite} build a locally dense lattice from a random sublattice of $\Z^n$. And, in our setting, we must carefully shift the target $\vec{t}$ to ensure that $2\vec{t}$ lies in the random sublattice. This shift causes some additional complications in the technical details of our proof. Our specific analysis of the closest vectors in this random sublattice to this shifted target vector follows \cite{lattices_parsification,conf/innovations/BennettPT22}.}

However, this construction provably fails for $p \leq p_0$. (See \cite{AS18}.) To obtain lattices with exponential kissing number for all $p > 2$, we therefore generalize this construction by replacing $\Z^n$ with the direct sum $\lat^\ell := \lat \oplus \cdots \oplus \lat \subset \R^{\ell m}$ of some lattice $\lat \subset \R^m$ in fixed dimension $m$. (Notice that $\Z^n$ is the special case when $m = 1$.) And, we take as our target $\vec{t}^\ell := (\vec{t},\ldots, \vec{t}) \in \R^{\ell m}$. The specific lattice that we choose is a Hadamard rotation of the lattice $D_m$ with target vector the same Hadamard rotation of $\vec{e}_1$. (This is a relatively natural choice in this context. In fact, for $m = 1$ and $m=2$, this yields an isomorphic construction to the $\Z^n$-based construction.) We then show how to generalize to this setting the techniques of \cite{AS18} (which are based on \cite{mazoLatticePointsHighdimensional1990,elkiesPackingDensitiesSuperballs1991,khotHardnessApproximatingShortest2005,dadushLatticeSparsificationApproximate2013}). Indeed, these techniques show that a random sublattice of $\lat^\ell$ yields an exact locally dense lattice for any $p > 2$ with $N \geq \Omega(\alpha_p^n)$ for some $\alpha_p > 1$ (see~\cref{sec:finishing} for the definition of $\alpha_p$). As an immediate corollary, we obtain exponentially large kissing number for all $p > 2$.

We also note that this gives the only construction of a locally dense lattice of any kind with exponentially large $N$ for $2 < p \leq p_0 \approx 2.1397$ and a slightly better exponential for $p_0 < p < p_1 \approx 2.1433$. ($p_1$ is the point where the $\Z^n$-based construction achieves the same value as the best non-trivial $D_m$-based construction. See \cref{fig:high_p_plot}.) 

One cannot hope to use the same techniques to prove that lattices with exponential kissing number exist for $p = 2$ or more generally for $1 \leq p \leq 2$. At a technical level, this is because the Theta function $\Theta_p$ that we work with in \cref{sec:sparsification} is positive definite for $p \leq 2$. This implies that for any fixed lattice $\lat \subset \R^m$ and any sufficiently large $\ell$, the number of vectors in a ball around $\vec0$ is never much smaller than the number of vectors in a ball of the same radius around $\vec{t}$. See \cite{mazoLatticePointsHighdimensional1990} for a proof when $\lat = \Z$ and $p = 2$, which generalizes to arbitrary lattices and $p \leq 2$. See also \cite{elkiesPackingDensitiesSuperballs1991,AS18}.

\begin{remark}
We note that our new construction of exact locally dense lattices implies new hardness results for $\SVP_p$, using the techniques in~\cite{AS18}. (We do not attempt to prove these hardness results formally, as this is not the focus of the present work.) Specifically our construction of exact locally dense lattices with exponential $N$ for $p > 2$ (in \cref{cor: alpha to kissing number}, which is the key step in proving \cref{item:main_theorem_exp} of \cref{thm:kn-lower-bounds-main}) implies (randomized) SETH-hardness of $\SVP_p$ for all $p > 2$, rather than just for $p > p_0 \approx 2.1397$. It also implies slightly improved SETH-hardness for $p_0 < p < p_1 \approx 2.1433$, since our construction achieves larger $N$ than the construction in \cite{AS18} for such $p$. (Here $p_0$ and $p_1$ are the values of $p$ described above; see \cref{fig:high_p_plot}.)

Additionally, we note that the (randomized) construction in \cref{thm:craig} of $(p, n, N)$-\emph{exact} locally dense lattices for any $p$ (including $p = 2$) and $N \geq 2^{\Omega(\sqrt{n})}$ implies $2^{\Omega(\sqrt{n})}$-hardness of $\SVP_p$ under \emph{non-gap} randomized ETH. This construction was essentially already present in \cite{conf/random/BennettP23} and even earlier work may yield constructions of (potentially non-exact) locally dense lattices with similar parameters (which would lead to hardness under a gap version of ETH). To the best of our knowledge, no prior work has shown such hardness for $\SVP_2$.
\end{remark}

\subsection{Concurrent Work}
\label{sec:concurrent}

Independent and concurrent work of Laarhoven and Kominers~\cite{laarhoven2026latticefamilykissingconstant} proved a very similar result to \cref{item:main_theorem_subexp} of \cref{thm:kn-lower-bounds-main}. Specifically, they proved that $\kappa_2(n) \geq e^{2\sqrt{n} - o(\sqrt{n})}$. Formally, this result is incomparable to ours, as we obtain the slightly weaker lower bound of $e^{\sqrt{n} - o(\sqrt{n})}$ but include the (simple) observation that this lower bound applies for $\kappa_p(n)$ for all $p \geq 1$, rather than just $p = 2$. 

The two constructions are very similar. In fact, an earlier version of \cite{laarhoven2026latticefamilykissingconstant} proved that $\kappa_2(n) \geq e^{\sqrt{n} - o(\sqrt{n})}$ using an essentially identical construction to our own. The extra factor of two in the exponent is obtained by an elegant modification of the Craig lattices that we use (and that the earlier version of \cite{laarhoven2026latticefamilykissingconstant} used), which has no analogue in our paper.
The authors similarly found this result by prompting AI tools. (See \cref{sec:AI}.)

Our presentation of this result in terms of exact locally dense lattices is novel, and it is this perspective that led to our exponential lower bound on $\kappa_p(n)$ for all $p > 2$.

\subsection{AI Statement}
\label{sec:AI}

The idea behind the use of Craig lattices in the construction of lattices $\lat \subset \R^n$ with $\kappa_2(\lat) \geq 2^{\Omega(\sqrt n)}$ in \cref{thm:craig} came from the second-named and third-named authors prompting ChatGPT.
Subsequently, the authors together obtained most of the remaining results without the use of AI. Specifically, the following results were obtained without the use of AI:
\begin{enumerate}
    \item The abstraction of this construction in terms of exact locally dense lattices (\cref{thm:kannans-embedding-formal}) and the generalization of this theorem given in \cref{thm:sparsification}.
    \item The explicit construction of lattices $\lat \subset \R^n$ with $\kappa_2(\lat) \geq 2^{\Omega(n^{1/4} \cdot \log n)}$ using this abstraction and the work of Wan~\cite{wan2026nphardnesssvpeuclideanspace} in \cref{cor:explicit-construction}.
    \item The invocation of \cref{thm:sparsification} using the integer lattice $\Z^n$ and point-counting techniques from~\cite{mazoLatticePointsHighdimensional1990,elkiesPackingDensitiesSuperballs1991,AS18} to obtain lattices $\lat \subset \R^n$ with $\kappa_p(\lat) \geq 2^{\Omega(n)}$ for $p > p_0 \approx 2.1397$.
\end{enumerate}
The idea to use a rotation of $D_m$ as the base lattice to achieve $\kappa_p(n) \geq 2^{\Omega(n)}$ for $2 < p \leq p_0$ in~\cref{thm: general G and A} also came from the authors.
One of the authors prompted ChatGPT for suggestions about which rotation to use, and it suggested several candidates including the Hadamard matrix $H_m$, which we eventually used.

The paper itself was written with limited use of AI tools. ChatGPT and Claude were used only to help find typos and for help with the Mathematica code used to generate \cref{fig:high_p_plot}.

\section{Preliminaries}
We denote vectors in $\R^m$ by bold lower-case letters (e.g., $\vec{x}$) and matrices in $\R^{m\times n}$ by bold upper-case letters (e.g., $\basis$). 

For a real number $1\le p<\infty$, we define the $\ell_p$ norm of a vector $\vec{x}\in\R^m$ as
\[
\|\vec{x}\|_p := \biggl(\sum_{i=1}^m|x_i|^p\biggr)^{1/p}.
\]
The $\ell_\infty$ norm of $\vec{x}$ is $\|\vec{x}\|_\infty := \max_{1\le i\le m}|x_i|$.
We will use the following standard inequality about norms.
\begin{fact}\label{fact: norm conversion}
For $2\le p<\infty$ and $\vec{x}\in\R^m$, we have
\[
\|\vec{x}\|_p\ge m^{1/p-1/2}\|\vec{x}\|_2.
\]
\end{fact}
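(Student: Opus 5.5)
The plan is to reduce the inequality to a single power-mean inequality applied to the squared coordinates. Since $p \ge 2$, the function $\phi(s) = s^{p/2}$ is convex on $[0,\infty)$. Set $a_i := x_i^2 \ge 0$ for $i = 1,\ldots,m$; then $\|\vec{x}\|_2^2 = \sum_i a_i$ and $\|\vec{x}\|_p^p = \sum_i a_i^{p/2}$. Applying Jensen's inequality to the uniform average of the $a_i$ gives
\[
\frac{1}{m}\sum_{i=1}^m a_i^{p/2} \;\ge\; \Bigl(\frac{1}{m}\sum_{i=1}^m a_i\Bigr)^{p/2}.
\]
In terms of norms, this reads $\|\vec{x}\|_p^p \ge m^{1-p/2}\,\|\vec{x}\|_2^{p}$.

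Next I take $p$-th roots, which is legitimate because both sides are nonnegative and $t \mapsto t^{1/p}$ is increasing. This yields
\[
\|\vec{x}\|_p \ge m^{1/p - 1/2}\|\vec{x}\|_2,
\]
which is exactly the claimed bound.

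An alternative route is H\"older's inequality with exponents $p/2$ and $p/(p-2)$ applied to $\sum_i a_i \cdot 1$. This gives $\sum_i a_i \le \bigl(\sum_i a_i^{p/2}\bigr)^{2/p} m^{1-2/p}$, i.e. $\|\vec{x}\|_2^2 \le m^{1-2/p}\|\vec{x}\|_p^2$, and taking square roots gives the same bound.

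There is no real obstacle here. The only points needing care are the sign of the exponent $1/p - 1/2 \le 0$ and the boundary case $p = 2$, where the inequality is an equality; both Jensen and H\"older handle these uniformly.
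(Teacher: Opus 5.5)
Your proof is correct. Applying Jensen to $s\mapsto s^{p/2}$ with $a_i=x_i^2$ gives exactly $\|\vec{x}\|_p^p \ge m^{1-p/2}\|\vec{x}\|_2^p$, and taking $p$-th roots gives the bound. The paper gives no proof of this fact and simply cites it as a standard norm inequality, so your power-mean (or H\"older) argument is the standard justification it relies on.

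One small nit: in the H\"older variant the conjugate exponent $p/(p-2)$ is undefined at $p=2$. That case should be read as H\"older with exponents $(1,\infty)$, or handled directly since it is trivial equality. The Jensen route has no such issue.
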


\subsection{Lattices}
A lattice $\lat\subseteq\R^m$ of rank $n$ is represented by a matrix $\basis\in \R^{m\times n}$ with full column rank. Specifically,
\[
\lat :=\lat(\basis)= \basis\Z^n.
\]
We define the minimum distance of a lattice $\lat$ in $\ell_p$ norm as 
\[
\lambda_1^{(p)}(\lat) = \min_{\vec{v}\in\lat\setminus\{\bm0\}}\|\vec{v}\|_p.
\]
The $\ell_p$-distance of a target vector $\vec{t}\in\R^m$ to a lattice $\lat\subseteq\R^m$ is
\[
\dist_p(\vec{t},\lat)=\min_{\vec{v}\in\lat}\|\vec{v}-\vec{t}\|_p.
\]
We write
\[
\CVP_p(\vec{t},\lat)=\argmin_{\vec{v}\in\lat}\|\vec{v}-\vec{t}\|_p
\]
for the set of closest vectors in $\lat$ to $\vec{t}$ in $\ell_p$ distance.

The next lemma tells us the lattice in~\cref{thm:kannans-embedding-formal} is well-defined.
\begin{lemma}\label{lem:well-definedness of lattice}
    If $(\lat,\vec{t})$ is a $(p,n,N)$-exact locally dense lattice then $\lat+\Z\vec t$ is a lattice.
\end{lemma}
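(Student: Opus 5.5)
The plan is to show that $\lat' := \lat + \Z\vec{t}$ is a discrete additive subgroup of $\R^m$. A discrete subgroup of $\R^m$ is a lattice, meaning it has a basis of linearly independent vectors. It is clearly a subgroup, being the sum of two subgroups. So everything reduces to discreteness, and it suffices to show that $\lat'$ has no nonzero vectors of arbitrarily small norm (all norms on $\R^m$ are equivalent, so we may work in $\ell_p$).

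Take a nonzero element $\vec{v} = \vec{x} + k\vec{t}$ with $\vec{x} \in \lat$ and $k \in \Z$, and split into two cases.
\begin{enumerate}
\item If $k\vec{t} \in \lat$, then $\vec{v} \in \lat \setminus \{\vec{0}\}$, so $\|\vec{v}\|_p \ge \lambda_1^{(p)}(\lat) > 0$.
\item If $k\vec{t} \notin \lat$, then \cref{local-dense-item-three} of \cref{def:locally-dense-lat} gives $\|\vec{v}\|_p = \|k\vec{t} - (-\vec{x})\|_p \ge \dist_p(k\vec{t},\lat) \ge \dist_p(\vec{t},\lat)$. This is strictly positive because $\vec{t} \notin \lat$ and $\lat$ is closed: it is discrete, so the minimum defining $\dist_p$ is attained.
\end{enumerate}
Hence every nonzero $\vec{v} \in \lat'$ satisfies $\|\vec{v}\|_p \ge \min\{\lambda_1^{(p)}(\lat), \dist_p(\vec{t},\lat)\} = \dist_p(\vec{t},\lat) > 0$, using \cref{item:distance_less_than_lambda1}. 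Therefore $\lat'$ is discrete.

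It remains to conclude that $\lat'$ is a lattice in the paper's sense, namely $\basis'\Z^{n'}$ for some full-column-rank $\basis'$. I would cite the standard fact that every discrete subgroup of $\R^m$ is generated by linearly independent vectors. Alternatively, one can argue directly. The group $\lat'$ is finitely generated (by a basis of $\lat$ together with $\vec{t}$) and torsion-free, so it is a free abelian group. Discreteness forces its rank to equal the dimension of its real span, since otherwise the image in a quotient would be a dense subgroup, contradicting discreteness. The rank is either $n$ (if $\vec{t} \in \mathrm{span}(\lat)$) or $n+1$.

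I expect no real obstacle. The only points needing care are the strict positivity of $\dist_p(\vec{t},\lat)$, which uses that $\lat$ is closed, and the appeal to the classical characterization of lattices as discrete subgroups.
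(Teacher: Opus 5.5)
Your proof is correct and follows essentially the same route as the paper. The paper also splits on whether $k\vec{t}\in\lat$, bounds the norm below by $\lambda_1^{(p)}(\lat)$ in one case and by $\dist_p(k\vec{t},\lat)\ge\dist_p(\vec{t},\lat)>0$ via \cref{local-dense-item-three} in the other, and concludes that a discrete additive subgroup is a lattice; your explicit justification of $\dist_p(\vec{t},\lat)>0$ and of the discrete-subgroup characterization just spells out what the paper leaves implicit.
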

\begin{proof}
     If $\vec{t}\in\lat$, the proof is immediate. When $\vec{t}\notin\lat$, \cref{def:locally-dense-lat} implies $0<\dist_p(\vec{t},\lat)\le \dist_p(k\vec{t},\lat)$ for all $k\ge 2$ with $k\vec{t}\notin\lat$. For all $\vec{v}\in\lat,z\in\Z$, without loss of generality we assume $z\ge 0$. If $z\vec{t}\in\lat$ then $\|\vec{v}-z\vec{t}\|_p\ge\lambda_1^{(p)}(\lat)\ge \dist_p(\vec{t},\lat)$, unless $\vec v-z\vec t=\bm{0}$. Otherwise,
     \[
     \|\vec{v}-z\vec{t}\|_p\ge \dist_p(z\vec{t},\lat)\ge \dist_p(\vec{t},\lat).
     \]
     This proves $\lat+\Z\vec{t}$ is discrete and additive, and hence it is a lattice.
\end{proof}

The $\ell_p$-kissing number, $\kappa_p$, of a lattice tells us how many lattice vectors simultaneously achieve this minimum in $\ell_p$ norm:
\[
\kappa_p(\lat) = |\{\vec{v}\in\lat:\|\vec{v}\|_p = \lambda_1^{(p)}(\lat)\}|.
\]
We also define $\kappa_p(n) := \max_{\lat \, : \, \rank(\lat) = n} \kappa_p(\lat)$.

\subsection{A bound on binomial coefficients} 
We record a standard estimate of binomial coefficients and a particular choice of parameters that we will use in the sequel.
\begin{fact}\label{fact:binom}
    For $n\ge k\ge 1$ and $k=o(n)$,
    \[
    \binom{n}{k}=\Theta\left(\frac1{\sqrt k}\left(\frac{ne}{k}\right)^ke^{-\frac{k^2}{2n}(1+o(1))}\right)\;.
    \]
    In particular, if $k=\sqrt n\pm O(1)$, then 
    \[
    \binom{n}{k}=\Theta(n^{k/2-1/4}e^k)
    \; .
    \]
\end{fact}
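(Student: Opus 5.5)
The plan is to derive both claims from Stirling's formula. The second claim then follows from the first by substituting $k$.

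For the first claim, I would write
\[
\binom{n}{k}=\frac{n!}{k!\,(n-k)!}=\frac{1}{k!}\prod_{i=0}^{k-1}(n-i)=\frac{n^k}{k!}\prod_{i=0}^{k-1}\Bigl(1-\frac{i}{n}\Bigr).
\]
Stirling gives $k! = \Theta(\sqrt{k}\,(k/e)^k)$. Hence $n^k/k! = \Theta\bigl(\tfrac{1}{\sqrt k}(ne/k)^k\bigr)$, which accounts for the prefactor and the main power.

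For the product I would take logarithms. Since $i/n \le k/n = o(1)$, the expansion $\log(1-x) = -x - O(x^2)$ applies, so
\[
\sum_{i<k}\log\Bigl(1-\frac in\Bigr) = -\frac{k(k-1)}{2n} - O\Bigl(\frac{k^3}{n^2}\Bigr).
\]
The main term is $-\tfrac{k^2}{2n}$. The correction $\tfrac{k}{2n}$ is $o(1)$, so it only changes the constant. The error $O(k^3/n^2)$ equals $\tfrac{k^2}{2n}\cdot O(k/n) = \tfrac{k^2}{2n}\cdot o(1)$. Both are therefore absorbed into $e^{-\frac{k^2}{2n}(1+o(1))}$, giving the first display.

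One point needs care: the $o(1)$ in the exponent must be understood jointly with the $\Theta$. The $-\tfrac{k}{2n}$ term is additive rather than multiplicative relative to $\tfrac{k^2}{2n}$, but it is bounded, so it contributes only a constant factor. This is a fine point of interpretation, not a real obstacle.

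For the second claim I would redo the computation directly rather than rely on the $o(1)$ in the exponent, because when $k^2/n = \Theta(1)$ the factor $e^{\Theta(1)\cdot o(1)}$ must be checked to be $\Theta(1)$.

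With $k = \sqrt n + c$ and $c = O(1)$, the exponential factor is $e^{-\frac{k^2}{2n}} = e^{-1/2 + O(1/\sqrt n)} = \Theta(1)$. The error term $O(k^3/n^2) = O(n^{-1/2})$ is also harmless. Next,
\[
(n/k)^k = n^{k/2}\cdot\bigl(\sqrt n/k\bigr)^k = n^{k/2}\bigl(1+c/\sqrt n\bigr)^{-k},
\]
and $(1+c/\sqrt n)^{-k} = e^{-c\,k/\sqrt n + O(k/n)} = e^{-c + o(1)} = \Theta(1)$. Finally, $1/\sqrt k = \Theta(n^{-1/4})$.

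Combining these gives $\Theta(n^{-1/4}\cdot n^{k/2} e^k) = \Theta(n^{k/2-1/4}e^k)$, as claimed. The only delicate step is tracking the constant-order contributions $e^{-c}$ and $e^{-1/2}$, and both are $\Theta(1)$.
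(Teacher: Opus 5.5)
Your proof is correct. The paper states this as a standard estimate and gives no proof of its own, and your derivation (Stirling for $k!$ plus the expansion of $\sum_{i<k}\log(1-i/n)$) is the standard argument it implicitly relies on. You also correctly redo the $k=\sqrt n\pm O(1)$ case directly, checking that $k^2/(2n)=1/2+O(n^{-1/2})$ and $(\sqrt n/k)^k=e^{-c+o(1)}$ each contribute only $\Theta(1)$. One trivial slip: the leftover term in the exponent is $+k/(2n)$, not $-k/(2n)$, which does not matter since it is $o(1)$.
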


\subsection{Hadamard matrices}
We use the following matrix, which is called a \emph{Hadamard matrix}. 
\begin{definition}[The Hadamard matrix]\label{def:Hadamard}
    Let $H_1=[1]$. For $m\ge 2$ a power of 2, the Hadamard matrix $H_m$ is recursively defined as
   \[H_{m}=
\begin{pmatrix}
H_{m/2} & H_{m/2} \\
H_{m/2} & -H_{m/2}
\end{pmatrix}.
    \]
\end{definition}

We will use the following fact about Hadamard matrices.

\begin{fact}\label{fact: Hadamard}
    $H_m\in\{\pm 1\}^{m\times m}$ satisfies $H_m^\top H_m=m\bm{I}_m$, and 
    \[
    \|H_m\vec e_i\|_p=m^{1/p}
    \]for any $p \geq 1$ and $1\le i\le m$, where $\vec e_i$ is the $i$th standard basis vector.
\end{fact}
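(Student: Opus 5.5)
We argue by induction on $m$, where $m$ ranges over powers of $2$, using the recursive block structure in \cref{def:Hadamard}. For $m=1$ we have $H_1=[1]$, so both claims are trivial: $H_1^\top H_1 = 1 = 1\cdot \bm{I}_1$ and $\|H_1\vec e_1\|_p = 1 = 1^{1/p}$. For the inductive step, assume $H_{m/2}\in\{\pm1\}^{(m/2)\times(m/2)}$ and $H_{m/2}^\top H_{m/2}=(m/2)\bm{I}_{m/2}$.

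First, the entries of $H_m$ are the entries of $H_{m/2}$ and their negations, so $H_m\in\{\pm1\}^{m\times m}$. Next, $H_{m/2}$ is symmetric by induction, since $H_1$ is symmetric and the block construction preserves symmetry. Hence $H_m^\top$ has the same block form as $H_m$. The block product then gives
\[
H_m^\top H_m=\begin{pmatrix} 2H_{m/2}^\top H_{m/2} & H_{m/2}^\top H_{m/2}-H_{m/2}^\top H_{m/2}\\ H_{m/2}^\top H_{m/2}-H_{m/2}^\top H_{m/2} & 2H_{m/2}^\top H_{m/2}\end{pmatrix}=m\bm{I}_m .
\]
One can also skip symmetry and compute with the transposed blocks directly; the off-diagonal blocks still cancel.

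For the norm claim, no induction is needed. The vector $H_m\vec e_i$ is the $i$th column of $H_m$, which has $m$ entries, each equal to $\pm1$. Therefore, for finite $p\ge1$,
\[
\|H_m\vec e_i\|_p=\Bigl(\sum_{j=1}^m |{\pm1}|^p\Bigr)^{1/p}=m^{1/p}.
\]

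None of these steps is a real obstacle. The only care needed is tracking the transposes in the block multiplication, and noting that symmetry of $H_{m/2}$ (or a direct computation) removes that issue.
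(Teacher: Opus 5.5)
Your proof is correct. The block computation gives $H_m^\top H_m = 2H_{m/2}^\top H_{m/2} \oplus 2H_{m/2}^\top H_{m/2} = m\bm{I}_m$, and as you note, symmetry is not even needed, since transposing the block form directly produces the same cancellation; the norm claim is immediate because each column of $H_m$ has $m$ entries of absolute value $1$. The paper records this as a standard fact without proof, and your induction on the recursive definition is exactly the standard argument one would supply.
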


Additionally, we will use the following lemma.

\begin{lemma}\label{lem:Hamadard norm}
    For any $p\ge 2$, $\vec x\in \R^m$ where $m$ is a power of 2, we have
    \[
    \|H_m\vec x\|_p\ge m^{1/p}\|\vec x\|_2\ .
    \]
\end{lemma}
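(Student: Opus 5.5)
We want $\|H_m\vec x\|_p \ge m^{1/p}\|\vec x\|_2$ for $p\ge 2$. The plan is to pass from $\ell_p$ to $\ell_2$ using \cref{fact: norm conversion}, and then to use the orthogonality relation from \cref{fact: Hadamard}.

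First, apply \cref{fact: norm conversion} to the vector $\vec y := H_m\vec x\in\R^m$. Since $p\ge 2$, this gives
\[
\|H_m\vec x\|_p \ge m^{1/p-1/2}\,\|H_m\vec x\|_2 .
\]

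Second, compute the $\ell_2$ norm exactly. By \cref{fact: Hadamard}, $H_m^\top H_m = m\bm I_m$, so
\[
\|H_m\vec x\|_2^2 = \vec x^\top H_m^\top H_m\vec x = m\|\vec x\|_2^2 ,
\]
that is, $\|H_m\vec x\|_2 = m^{1/2}\|\vec x\|_2$.

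Combining the two displays gives $\|H_m\vec x\|_p \ge m^{1/p-1/2}\cdot m^{1/2}\|\vec x\|_2 = m^{1/p}\|\vec x\|_2$, as claimed.

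There is no real obstacle. The only point to check is that \cref{fact: norm conversion} holds for $p\ge 2$; it follows from H\"older's (or the power-mean) inequality, $\|\vec y\|_2 \le m^{1/2-1/p}\|\vec y\|_p$. For $p=\infty$ (if needed), the same argument works with $\|\vec y\|_\infty \ge m^{-1/2}\|\vec y\|_2$.
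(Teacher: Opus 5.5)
Your proposal is correct and is essentially the paper's own proof: apply \cref{fact: norm conversion} to $H_m\vec x$, then use $H_m^\top H_m = m\bm I_m$ from \cref{fact: Hadamard} to get $\|H_m\vec x\|_2=\sqrt m\,\|\vec x\|_2$. Your extra remark covering $p=\infty$ (where \cref{fact: norm conversion} as stated does not apply) is also correct.
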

\begin{proof}
We have $\|H_m\vec x\|_p\ge m^{1/p-1/2}\|H_m\vec x\|_2=m^{1/p-1/2}\sqrt m\|\vec x\|_2=m^{1/p}\|\vec x\|_2$, where the first inequality follows from~\cref{fact: norm conversion}, and the first equality follows from~\cref{fact: Hadamard}. 
\end{proof}
\subsection{Craig lattices and Reed-Solomon lattices}

One important family of lattices that we will study is the family of \emph{Craig lattices}~\cite{Craig_1978} (see also~\cite[Ch. 8, Sec. 6]{CS99}).
\begin{definition}\label{def:craig}
    For an odd prime $q$ and a positive integer $m$, the \emph{Craig lattice} $A_{q-1}^{(m)}$ is
    \[
A_{q-1}^{(m)}
:=
\set*{
\vec{x}\in\Z^q :
\sum_{i=1}^q x_i = 0 \quad\text{and}\quad 
\sum_{i=1}^q i^j \cdot x_i \equiv 0 \Mod{q} \text{ for $j=1,\ldots,m-1$}} \ \text{.}
\]
\end{definition}
Equivalently, $A_{q-1}^{(m)}$ is (the coefficient embedding of) the principal ideal $((x-1)^m)$ of the quotient ring $\Z[x]/(x^q - 1)$.
The next theorem lower bounds the minimum distance of the Craig lattice. The result follows from~\cite[Chapter 8, Theorem 7]{CS99} and~\cite[Theorem 3.1]{conf/random/BennettP23}.
\begin{theorem}\label{thm:minimum bound}
    If $p\ge 1$ and $m\le q/2$, then $\lambda_1^{(p)}(A_{q-1}^{(m)})\ge (2m)^{1/p}$.
\end{theorem}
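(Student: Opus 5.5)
We will prove that every nonzero $\vec{x} \in A_{q-1}^{(m)}$ satisfies $\|\vec{x}\|_p \ge (2m)^{1/p}$. The plan has two parts: first bound the number of nonzero coordinates of $\vec{x}$ from below by $2m$, then convert that support bound into an $\ell_p$ bound. The conversion is immediate. Every coordinate of $\vec{x}$ is an integer, so each nonzero coordinate has absolute value at least $1$. Hence $\|\vec{x}\|_p^p \ge |\mathrm{supp}(\vec{x})|$ for every $p \ge 1$, and it suffices to show $|\mathrm{supp}(\vec{x})| \ge 2m$, or more generally $\|\vec{x}\|_1 \ge 2m$ together with an appropriate support statement.

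For the support bound we use the polynomial description. Identify $\vec{x}$ with $f(X) = \sum_i x_i X^i \in \Z[X]/(X^q-1)$; the defining congruences (together with $\sum_i x_i = 0$) say that $f$ lies in the ideal generated by $(X-1)^m$. Reducing modulo $q$, we have $X^q - 1 \equiv (X-1)^q$ over $\mathbb{F}_q$. So the reduction $\bar f$ is divisible by $(X-1)^m$ in $\mathbb{F}_q[X]/((X-1)^q)$, i.e., $\bar f$ vanishes to order at least $m$ at $X=1$. We split into two cases.

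If $\bar f \neq 0$, then $\bar f$ is a nonzero polynomial of degree less than $q$ with a root of multiplicity $m$ at $1$, and we want to show it has at least $m+1$ terms. The standard sparse-polynomial argument does this. Suppose $\bar f = \sum_{k=1}^{s} c_k X^{a_k}$ with distinct exponents $a_k \in [0,q)$ and $s \le m$. Vanishing of the Hasse derivatives of orders $0,\ldots,m-1$ at $1$ gives $\sum_k c_k \binom{a_k}{j} = 0$ for $j < s$. This is a Vandermonde-type system in the $a_k$, which are distinct modulo $q$, so it forces all $c_k = 0$, a contradiction. This case only yields support at least $m+1$, which is weaker than $2m$.

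To get $2m$ we exploit the integer structure. Write $\vec{x} = \vec{x}^+ - \vec{x}^-$ and count with multiplicity; then $\|\vec{x}\|_1 = 2S$, where $S = \sum_i x_i^+ = \sum_i x_i^-$ is the common mass of the two parts, since the coordinates sum to $0$. The congruences say that the multisets $P$ and $N$ of positions (with multiplicity) have equal power sums $\sum i^j \bmod q$ for $j < m$. By Newton's identities over $\mathbb{F}_q$, equal power sums give equal elementary symmetric functions up to degree $m-1$, provided $m-1 < q$ so that the relevant integers are invertible. If $S \le m-1$, both polynomials $\prod_{i \in P} (X-i)$ and $\prod_{i \in N}(X-i)$ have degree $S$ and agree in all coefficients. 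They are therefore equal in $\mathbb{F}_q[X]$, so $P = N$ as multisets of residues mod $q$, which forces $\vec{x} = 0$. Thus $S \ge m$ and $\|\vec{x}\|_1 \ge 2m$, which is the Conway--Sloane theorem cited above.

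The main obstacle is passing from this $\ell_1$ bound to the $\ell_p$ bound $(2m)^{1/p}$ when $p>1$, because large coordinates help $\ell_1$ more than $\ell_p$. This is where the hypothesis $m \le q/2$ and the Bennett--Peikert refinement come in. The plan is to show that the support itself has size at least $2m$. The approach I would try first is to apply the Newton argument to the sets $\mathrm{supp}(\vec{x}^\pm)$ weighted by multiplicities. Alternatively, I would argue that replacing coordinates by signs preserves enough congruences, giving $\sum_i |x_i|^p \ge \sum_i |x_i| \ge 2m$ unless that fails. I would then handle the case of a large coordinate separately using $|x_i| \le q$ and $m \le q/2$. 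Since this refinement is exactly the cited theorem, the cleanest route is to invoke \cite[Theorem 3.1]{conf/random/BennettP23} for the statement that nonzero vectors have $\|\vec{x}\|_p^p \ge 2m$.
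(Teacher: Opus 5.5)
Your Newton's-identities paragraph is correct, and it already finishes the proof. The ``main obstacle'' in your last paragraph does not exist. Because $\vec{x}\in\Z^q$, each nonzero coordinate has $|x_i|\ge 1$, so $|x_i|^p=|x_i|\cdot|x_i|^{p-1}\ge|x_i|$ for every $p\ge 1$. Therefore $\|\vec{x}\|_p^p\ge\|\vec{x}\|_1\ge 2m$, which is exactly $\|\vec{x}\|_p\ge(2m)^{1/p}$. (The paper uses the same inequality $\|\vec{w}\|_p^p\ge\|\vec{w}\|_1$ in the proof of \cref{thm:craig}.) Your worry that large coordinates help $\ell_1$ more than $\ell_p$ conflates $\|\vec{x}\|_p$ with $\|\vec{x}\|_p^p$. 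The target is a bound on the $p$-th power, and for integer vectors large coordinates help $\|\vec{x}\|_p^p$ at least as much as $\|\vec{x}\|_1$. Your first paragraph was one observation away from this, since there you only extracted $\|\vec{x}\|_p^p\ge|\mathrm{supp}(\vec{x})|$.

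The alternative you propose, proving $|\mathrm{supp}(\vec{x})|\ge 2m$, would fail, because that statement is false for every $m\ge 2$. Take $x_{k+1}=(-1)^{m-k}\binom{m}{k}$ for $0\le k\le m$ and $x_i=0$ otherwise; this is the coefficient vector of $X(X-1)^m$. Then $\sum_i i^j x_i$ is the $m$-th finite difference at $0$ of the polynomial $(y+1)^j$, which has degree $j<m$. So it equals $0$ exactly for $0\le j\le m-1$, and $\vec{x}\in A_{q-1}^{(m)}$. Yet $|\mathrm{supp}(\vec{x})|=m+1<2m$; for $m=2$ this vector is $(1,-2,1,0,\dots,0)$. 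So your Hasse-derivative bound of $m+1$ is tight. That paragraph can be dropped along with the last one; it also never treats the case $\vec{x}\in q\Z^q$.

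As for the comparison: the paper gives no argument for this theorem, only citing \cite[Chapter 8, Theorem 7]{CS99} and \cite[Theorem 3.1]{conf/random/BennettP23}. Replace your last paragraph with the one-line integrality inequality. Your write-up then becomes a short, self-contained proof that handles all $p\ge 1$ at once through the $\ell_1$ bound. It also shows the hypothesis is stronger than needed. Your Newton step only requires $1,\dots,m-1$ to be invertible modulo $q$, so the conclusion holds for all $m\le q$, not just $m\le q/2$.
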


Reed-Solomon lattices are another family of lattices whose definition is very similar to that of Craig lattices.
\begin{definition}[Reed-Solomon lattices]
\label{def:RS}
For an odd prime $q$ and an integer $1 \leq m \leq q$, the \emph{Reed-Solomon lattice} $\lat_{q,m}$ is
\[
\lat_{q,m} :=
\set*{
\vec{x}\in\Z^q :
\sum_{i=1}^q i^j \cdot x_i \equiv 0 \Mod{q} \text{ for $j=0,\ldots,m-1$}} \ \text{.}
\]
\end{definition}

Equivalently, one can obtain a Reed-Solomon lattice $\lat_{q,m}$ by applying Construction A to a Reed-Solomon code over $\F_q$ with block length $q$ and codimension $m$.

Note that the only difference between the definition of a Craig lattice and a Reed-Solomon lattice is that the Craig lattice definition requires $\sum_{i=1}^q x_i=0$ while the latter only imposes such a condition modulo $q$. 
In particular, the Craig lattice $A_{q-1}^{(m)}$ is the codimension-one sublattice of $\lat_{q,m}$ obtained by intersecting $\lat_{q,m}$ with the subspace orthogonal to $(1,\ldots, 1)^\top$.

\subsection{Counting short lattice vectors using the Theta function}

For a discrete set $S\subset \R^n$ and radius $r>0$, we write 
\[
N_p(S,r) := |\{\vec{y}\in S:\|\vec{y}\|_p\le r\}|,\quad
N_p^o(S,r) := |\{\vec{y}\in S:\|\vec{y}\|_p< r\}|.
\]
We define the Theta function as
\[
\Theta_p(\lat,\tau):=\sum_{\vec{z}\in\lat}e^{-\tau \|\vec{z}\|_p^p}\ ,
\]
where $\lat$ is a lattice in $\R^n$, $\tau>0$ and $p\ge 1$. We have the following simple result (following~\cite[Theorem~4.3]{AS18}, which itself follows \cite{mazoLatticePointsHighdimensional1990,elkiesPackingDensitiesSuperballs1991}).
\begin{theorem}\label{thm:ball_point_count}
For any lattice $\lat\subseteq \R^n$, any radius $r>0$, and any $\tau > 0$,
\[
    N_p(\lat,r) < \min_{\tau >0} e^{\tau r^p}\Theta_p(\lat,\tau)
    \; .
\]
In particular, $N_p(\Z^n,n^{1/p}/2) < c_p^n$ where
\[
c_p:=\min_{\tau>0}e^{\tau/2^{p}}\sum_{z\in\Z}e^{-\tau|z|^{p}}
\; .
\]
Furthermore, when $p > p_0 $, then $0<c_p < 2$, where
 $p_0 \approx 2.1397$ is defined as the unique value such that $c_{p_0} = 2$.
\end{theorem}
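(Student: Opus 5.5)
The first inequality is a Chernoff-type (Markov) bound, and the second is its specialization to $\Z^n$. For any fixed $\tau > 0$, every $\vec{y} \in \lat$ with $\|\vec{y}\|_p \le r$ satisfies $e^{\tau(r^p - \|\vec{y}\|_p^p)} \ge 1$. Summing over such $\vec{y}$ gives
\[
N_p(\lat,r) \le \sum_{\vec{y}\in\lat,\ \|\vec{y}\|_p\le r} e^{\tau r^p} e^{-\tau\|\vec{y}\|_p^p} \le e^{\tau r^p}\Theta_p(\lat,\tau).
\]
The second inequality is strict because $\Theta_p(\lat,\tau)$ also includes the strictly positive terms from the (infinitely many) lattice vectors outside the ball. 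The sum converges, since the number of lattice points of norm at most $R$ grows polynomially in $R$, so the right-hand side is finite. Taking the infimum over $\tau$ keeps the inequality. One small point is whether the minimum over $\tau$ is attained. As $\tau \to 0$ the function blows up because $\Theta_p(\lat,\tau) \to \infty$, and as $\tau \to \infty$ it blows up because of the factor $e^{\tau r^p}$ (the $\vec{0}$ term contributes $e^{\tau r^p} \to \infty$). So by continuity the minimum is attained and the strict inequality survives.

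For $\Z^n$ with $r = n^{1/p}/2$, we have $r^p = n/2^p$. The theta function factorizes over coordinates, because $\|\vec{z}\|_p^p = \sum_i |z_i|^p$:
\[
\Theta_p(\Z^n,\tau) = \Bigl(\sum_{z\in\Z} e^{-\tau|z|^p}\Bigr)^n.
\]
Hence $e^{\tau r^p}\Theta_p(\Z^n,\tau) = \bigl(e^{\tau/2^p}\sum_{z} e^{-\tau|z|^p}\bigr)^n$. Choosing $\tau$ to minimize the one-dimensional quantity, which is attained by the same coercivity argument, gives $N_p(\Z^n, n^{1/p}/2) < c_p^n$.

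The final claim is that $0 < c_p < 2$ for $p > p_0$, where $p_0$ is the unique point with $c_{p_0} = 2$. Positivity is trivial. For the rest I would write $f_p(\tau) = e^{\tau/2^p}(1 + 2\sum_{z\ge1} e^{-\tau z^p})$ and argue that $c_p = \min_\tau f_p(\tau)$ is continuous and strictly decreasing in $p$.

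For strict monotonicity, fix $\tau$ and substitute $\tau = 2^p s$. This gives
\[
f_p = e^{s}\Bigl(1 + 2\sum_{z\ge1} e^{-s(2z)^p}\Bigr),
\]
and every term $e^{-s(2z)^p}$ is strictly decreasing in $p$ because $2z \ge 2 > 1$. Hence $\min_s$ is strictly decreasing in $p$. Continuity follows from uniform convergence on compact $s$-ranges, together with the fact that the minimizer stays in a compact range locally in $p$.

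It remains to check the limits. As $p \to \infty$ we get $c_p \to \min_s e^s = 1 < 2$, by taking $s$ small but fixed. At $p = 2$ the value is $c_2 > 2$, which can be checked numerically, or via the Mazo–Odlyzko/Elkies positive-definiteness remark that the construction must fail for $p \le 2$. Together these give a unique crossing $p_0$ with $c_p < 2$ for $p > p_0$. The numerical value $p_0 \approx 2.1397$ is just a computation.

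The main obstacle is this last monotonicity and uniqueness claim. It is routine once the $\tau = 2^p s$ reparametrization is found, but the numerics at $p = 2$ (or some $p$ below $p_0$) have to be justified.
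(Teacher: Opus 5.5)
Your proof is correct and follows the same route as the paper: the Chernoff-type bound $N_p(\lat,r) < e^{\tau r^p}\Theta_p(\lat,\tau)$, the coordinatewise factorization of $\Theta_p(\Z^n,\tau)$, and monotonicity of $c_p$ in $p$ for uniqueness of $p_0$. Your continuity/intermediate-value argument with $c_2>2$ and $c_p\to 1$ makes the existence of $p_0$ more explicit than the paper's purely numerical assertion. The substitution $\tau=2^p s$ is harmless but not needed: for fixed $\tau$ the factor $e^{\tau/2^p}$ is already strictly decreasing in $p$ and each $e^{-\tau|z|^p}$ with $|z|\ge 1$ is non-increasing, which is exactly how the paper argues.
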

\begin{proof}
 We use the following observation in~\cite{AS18}. For $r>0$, $\Theta_p(\lat,\tau)> \sum_{\vec{y}\in \lat,\|\vec{y}\|_p\le r}e^{-\tau\|\vec{y}\|_p^p}\ge N_p(\lat,r) e^{-\tau r^p}$, and hence $N_p(\lat,r)<e^{\tau r^p}\Theta_p(\lat,\tau)$. Taking the minimum over $\tau>0$ (it is easy to see that the minimum is achieved), we get
\[
    N_p(\lat,r)< \min_{\tau>0} e^{\tau r^p}\Theta_p(\lat,\tau)\ .
\]
For $\Z^n$, we also have that
\begin{align}
\Theta_p(\Z^n,\tau)=\sum_{\vec{z}\in\Z^n}e^{-\tau\|\vec{z}\|_p^p}&=\sum_{z_1,\dots,z_n\in\Z}e^{-\tau\sum_{i=1}^n|z_i|^p}\notag\\
&= \left(\sum_{z\in \Z}e^{-\tau |z|^p}\right)^n\notag\\
&=\Theta_p(\Z,\tau)^n. \label{eq:orthogonal-to-power-theta}
\end{align}
This implies $N_p(\Z^n,r)< \left(\min_{\tau>0} e^{\tau r^p/n}\Theta_p(\Z,\tau) \right)^n$.

Fix $r=n^{1/p}/2$. Let $g(p,\tau)=e^{\tau/2^{p}}\sum_{z\in\Z}e^{-\tau|z|^{p}}$, where $p\ge 1,\tau>0$. Let $c_p:=\min_{\tau>0}g(p,\tau)$, we have
\[
N_p(\Z^n,\frac12 n^{1/p})< \left(\min_{\tau>0} e^{\tau /2^p}\Theta_p(\Z,\tau) \right)^n=\left(\min_{\tau>0}g(p,\tau)\right)^n=c_p^n.
\]
Since $|z|=0$ or $|z|\ge 1$, for a fixed $\tau$, $g(p,\tau)$ is strictly decreasing with $p$. $\min_{\tau>0}g(p,\tau)=2$ is achieved at $p_0\approx 2.1397$ (which can be easily computed to high accuracy, as is discussed in \cite{AS18}), and monotonicity shows that $p_0$ is unique. This proves that for any $p>p_0$, $c_p=\min_{\tau>0}g(p,\tau)<\min_{\tau>0}g(p_0,\tau)=2$.
\end{proof}

\subsection{Lattice sparsification}

We next present a result about lattice sparsification, which shows that if a lattice $\lat$ and target $\vec{t}$ in some sense ``almost'' form a locally dense lattice, then a suitably chosen random sublattice $\lat'$ and shifted target $\vec{t}'$ will form an exact locally dense lattice with high probability. This result is adapted from~\cite[Proposition~2.5]{conf/innovations/BennettPT22}. It differs from \cite[Proposition~2.5]{conf/innovations/BennettPT22} primarily in (1) the need to consider the order $s$ of $\vec{t}$ modulo $\lat$ and (2) the related need to choose $\vec{t}'$ carefully. This causes some additional complications in the proof. As the proof is a bit long and relatively similar to prior work, we defer it to~\cref{app:A}.

\begin{restatable}[Sparsification with a careful shift]{proposition}{prop}\label{lem:sparsification-25}
    Let $p\ge 1$, $n\ge 2$, $\lat$ be a lattice of rank $n$ with basis $\basis$, $\vec{t}\in\spn(\lat)\setminus \lat$. For $r>0$, let $$A=|\{\vec{y}-k\vec{t}:\vec{y}\in\lat,k\in\Z,\|\vec{y}-k\vec{t}\|_p<r\}|<\infty,\quad G=|\{\vec v\in \lat:\|\vec v-\vec t\|_p=r\}|.$$ Let $q>A$ be an odd prime, $s=\min\{z\in\Z_{\ge 1}:z\vec{t}\in\lat\}$, and $\vec d\in\Z^n$ be such that $\basis\vec d = s\vec t$.

    Assume $q\nmid s$, sample $\vec{a}$ uniformly over $\Z^n_q$ and fix $\vec z = s^*\vec d$ where $s^*$ is any representative of $s^{-1}\pmod q$ in $\Z$. Define 
    \[
    \lat' = \{\basis \vec{x}:\vec{x}\in\Z^n,\langle \vec{x},\vec{a}\rangle\equiv 0\pmod q\},\quad \vec{t}':=\vec{t}-\basis\vec{z}.
    \]
    Then $\lat'$ has rank $n$, $\lat'+\Z\vec t'$ is a lattice, and the following hold:
    \begin{enumerate}
        \item \label{item-one-BPT25}(Minimum distance.) $
\Pr[\lambda_1^{(p)}(\lat'+\Z\vec t')<r]\le \frac{A-1}{q}$.
        \item \label{item-two-BPT25} (Close vector count.) For any $\delta>0$, $\Pr[N_p(\lat'-\vec t',r)\le (1-\delta)\cdot\frac{G}{q}]\le\frac{Cq}{\delta^2 G}$, where $C=1$ if $s> 2$, and $C=2$ if $s=2$.
    \end{enumerate}
\end{restatable}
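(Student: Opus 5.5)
Throughout, write every vector of $\lat+\Z\vec t$ as $\basis\vec x - k\vec t$ and work in coordinates. First the structural claims. $\lat'$ contains $q\lat$, so it has rank $n$. Next, $s\vec t' = s\vec t - s\basis\vec z = \basis(\vec d - ss^*\vec d) = \basis\bigl((1-ss^*)\vec d\bigr)$, and $q \mid 1-ss^*$. Hence $s\vec t'\in q\lat\subseteq\lat'$. Also $\vec t'-\vec t\in\lat$, so $\lat'+\Z\vec t'\subseteq \lat+\Z\vec t$, and the latter is discrete because $A<\infty$ (we may also note that $\lat+\Z\vec t$ is $\frac1s\basis\vec d\,\Z+\lat$, which is discrete since $s\vec t\in\lat$). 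A subgroup of a discrete group is discrete, so $\lat'+\Z\vec t'$ is a lattice.

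\textbf{Minimum distance.} Take a nonzero $\vec w\in\lat'+\Z\vec t'$ with $\|\vec w\|_p<r$. Then $\vec w=\basis\vec x-k\vec t'=\basis(\vec x+k\vec z)-k\vec t$ with $\langle\vec x,\vec a\rangle\equiv0$. Write $\vec w=\basis\vec y-k\vec t$ with $\vec y=\vec x+k\vec z$. The representation $(\vec y,k)$ is not unique, since $(\vec y,k)\sim(\vec y+j\vec d,k+js)$. So fix a canonical representative with $0\le k<s$. The membership condition becomes $\langle \vec y - k\vec z,\vec a\rangle\equiv 0\pmod q$. Modulo $q$ this is a condition on the vector $\vec u:=\vec y-k s^*\vec d\bmod q$. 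Under the shift $(\vec y,k)\mapsto(\vec y+\vec d,k+s)$ we get $\vec u\mapsto \vec u+\vec d-ss^*\vec d\equiv \vec u$, so $\vec u\bmod q$ is well defined from $\vec w$. The event is $\langle\vec u,\vec a\rangle\equiv0$. If $\vec u\not\equiv\vec 0\pmod q$, this has probability exactly $1/q$. We union bound over the at most $A-1$ nonzero vectors of norm $<r$. The remaining issue is vectors with $\vec u\equiv\vec0$, i.e.\ $\vec w\in q(\lat+\Z\vec t)$ roughly. For these, $\vec w/q$ is also in $\lat+\Z\vec t$ (one checks $\vec w = q\cdot(\text{element})$ using $q\nmid s$ and $\vec u \equiv 0$). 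Then $\vec w,\vec w/q,2\vec w/q,\dots$ are distinct short vectors, so $q<A$ would be violated unless $\vec w=\vec 0$. This gives item~\ref{item-one-BPT25}.

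\textbf{Close vector count.} Elements of $\lat'-\vec t'$ are $\basis\vec x-\vec t'=\basis(\vec x+\vec z)-\vec t$. So $N_p(\lat'-\vec t',r)\ge$ the number of $\vec v=\basis\vec y\in\lat$ with $\|\vec v-\vec t\|_p=r$ and $\langle\vec y-\vec z,\vec a\rangle\equiv0$. Let $X=\sum_{\vec v}X_{\vec v}$ over the $G$ vectors $\vec v$, with $X_{\vec v}$ the indicator of that event. Each $\vec y-\vec z$ is nonzero mod $q$: otherwise $\vec v-\vec t\in q\lat+(\basis\vec z-\vec t)$, and hence lies in $\lat$, which would give $\vec t\in \lat+\ldots$; we use $\vec t\notin\lat$ and $q\nmid s$ to rule this out. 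So $\E[X]=G/q$. For the variance, pairs $\vec y_1-\vec z,\vec y_2-\vec z$ that are linearly independent mod $q$ give independent indicators. Dependent pairs satisfy $\vec y_2-\vec z\equiv c(\vec y_1-\vec z)$. For each $\vec y_1$, we bound the number of such $\vec y_2$ by the number of $c$ for which $\basis\vec y_2-\vec t\equiv c(\basis\vec y_1-\vec t)$ modulo $q$-multiples and $\vec t$-shifts. The short-vector count gives $c\in\{1\}$ only, or additionally $c\equiv-1$ when $s=2$. The $c=-1$ case arises because $\vec v_2-\vec t=-(\vec v_1-\vec t)$ is realizable exactly when $2\vec t\in\lat$; this uses that the differences $\vec v_1-\vec v_2$ and related combinations have norm $<2r$ and that $q>A$. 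Hence $\mathrm{Var}(X)\le C\,\E[X]$, and Chebyshev gives $\Pr[X\le(1-\delta)G/q]\le Cq/(\delta^2G)$.

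The main obstacle is this dependent-pair counting: showing that $c\not\equiv\pm1$ cannot occur, using $q>A$ and the bound on short vectors of $\lat+\Z\vec t$. The first thing to try is to take $\vec y_2 - \vec z - c(\vec y_1-\vec z)\in q\Z^n$ and form the short combinations $(\vec v_2-\vec t)-j(\vec v_1-\vec t)$, arguing via pigeonhole over $j$.
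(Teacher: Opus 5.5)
Your structural claims and item~1 are correct and essentially follow the paper. Your check that $\vec u \bmod q$ does not depend on the representative $(\vec y,k)$ is a tidy substitute for the paper's $0\le e<s$ argument.

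The gap is in item~2. First, your reason for $\vec y-\vec z\not\equiv\vec 0\pmod q$ is wrong. The set $q\lat+(\basis\vec z-\vec t)=q\lat-\vec t'$ is disjoint from $\lat$ (as $\vec t'\notin\lat$). Moreover, $\vec t\notin\lat$ and $q\nmid s$ alone cannot exclude this case: take $\lat=\Z^2$, $\vec t=(q/2,0)^\top$, $\vec v=\vec 0$; then $s=2$, $\vec d=(q,0)^\top$, and $\vec y-\vec z\equiv\vec 0$. What excludes it is $q>A$, exactly as in your item~1. From $\vec y\equiv s^*\vec d$ you get $s(\vec v-\vec t)=\basis(s\vec y-\vec d)\in q\lat$. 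With $M:=\lat+\Z\vec t$ and $\gcd(s,q)=1$, this gives $\vec u:=\vec v-\vec t\in qM$. Then $j\vec u/q$ for $|j|\le q-1$ are $2q-1$ elements of $M$ of norm $<r$ (\cref{lem: short vec not in qL}), a contradiction.

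More seriously, you leave the heart of item~2 unproved. This is the claim that for $\vec v_i\neq\vec v_j$ the vectors $\vec y_i-\vec z$ and $\vec y_j-\vec z$ are linearly independent mod $q$ unless $\vec u_i=-\vec u_j$. You call it ``the main obstacle,'' so $\Var[X]\le C\,\E[X]$ is unsupported.

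Your suggested attack would not close it. Multiplying $\vec y_i-\vec z\equiv c(\vec y_j-\vec z)$ by $s$ and applying $\basis$ gives $s(\vec u_i-c\,\vec u_j)\in q\lat$, hence $\vec u_i\equiv c\,\vec u_j\pmod{qM}$. Dividing this single relation by $q$, or extracting one pigeonhole collision, yields only \emph{one} nonzero vector of norm $<r$. That does not contradict $q>A$. The combinations $\vec u_j-j'\vec u_i$ themselves need not have norm $<r$ at all.

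You need $q$ distinct short vectors, which is the content of \cref{lem: equal norm not independent}. For each $a\in\{-\frac{q-1}{2},\dots,\frac{q-1}{2}\}$, choose $b_a$ in the same range with $b_a\equiv -ac\pmod q$. Then $\vec s_a:=(a\vec u_i+b_a\vec u_j)/q=\bigl(a(\vec u_i-c\,\vec u_j)+(ac+b_a)\vec u_j\bigr)/q\in M$, with $\|\vec s_a\|_p\le\frac{|a|+|b_a|}{q}\,r<r$. These $q$ vectors are distinct because equal-norm vectors with $\vec u_i\neq\pm\vec u_j$ are linearly independent. So $A\ge q$, a contradiction.

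Hence the only surviving dependence is $\vec u_i=-\vec u_j$, i.e., $\vec v_i+\vec v_j=2\vec t\in\lat$. This forces $s=2$ and $\vec y_j-\vec z\equiv-(\vec y_i-\vec z)$. With this in hand, your covariance count works: each index has at most $C$ dependent indices (itself included), and each covariance is at most $1/q$. This gives $\Var[X]\le CG/q$ and the stated Chebyshev bound, equivalent to the paper's grouping into antipodal pairs.
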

\section{Our main generic constructions}

In this section, we will prove the two main generic results that are behind our constructions of lattices with large kissing number. First, we prove that an exact locally dense lattice does in fact yield a lattice with large kissing number. (As we mentioned in \cref{sec:our_constribution} and as is evident from the short proof, this result is immediate once one has the right definition of an exact locally dense lattice.)

\begin{theorem}[Kissing number from exact locally dense lattice]
\label{thm:kannans-embedding-formal}
Fix any $p\ge 1$, let $(\lat, \vec{t})$ be a $(p, n, N)$-exact locally dense lattice. Then $\lat+\Z\vec t$ is a lattice with $\kappa_p(\lat+\Z\vec t) \ge N$.
\end{theorem}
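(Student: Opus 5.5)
The plan is to show that $\lambda_1^{(p)}(\lat+\Z\vec t)=\dist_p(\vec t,\lat)$ and then exhibit $N$ vectors of $\lat+\Z\vec t$ attaining this norm. Write $d:=\dist_p(\vec t,\lat)$. Since $\vec t\notin\lat$, we have $d>0$. By \cref{lem:well-definedness of lattice}, $\lat':=\lat+\Z\vec t$ is a lattice, so $\lambda_1^{(p)}(\lat')$ and $\kappa_p(\lat')$ make sense. Note also that $\rank(\lat')=n$, because the existence of closest vectors at finite distance and discreteness force $\vec t\in\spn(\lat)$; in any case only the kissing count is needed.

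First we prove the lower bound $\lambda_1^{(p)}(\lat')\ge d$. Take any nonzero $\vec w=\vec v-k\vec t\in\lat'$ with $\vec v\in\lat$ and $k\in\Z$, and split into two cases.
\begin{itemize}
\item If $k\vec t\in\lat$, then $\vec w\in\lat\setminus\{\vec 0\}$. Hence $\|\vec w\|_p\ge\lambda_1^{(p)}(\lat)\ge d$ by \cref{item:distance_less_than_lambda1} of \cref{def:locally-dense-lat}.
\item If $k\vec t\notin\lat$, then $\|\vec w\|_p\ge\dist_p(k\vec t,\lat)\ge d$ by \cref{local-dense-item-three}. This covers negative $k$ directly, since the condition is stated for every integer $k$.
\end{itemize}
This is essentially the computation already carried out in the proof of \cref{lem:well-definedness of lattice}, so I would reuse it.

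Next we prove the matching upper bound and count. By the first condition of \cref{def:locally-dense-lat}, there are at least $N$ vectors $\vec x\in\lat$ with $\|\vec x-\vec t\|_p=d$. The map $\vec x\mapsto\vec x-\vec t$ is injective, and each image lies in $\lat'$ with norm exactly $d$. Each image is nonzero, because $\vec t\notin\lat$. Combined with the lower bound, this gives $\lambda_1^{(p)}(\lat')=d$, and these $N$ distinct vectors are shortest nonzero vectors of $\lat'$. Therefore $\kappa_p(\lat')\ge N$.

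There is no real obstacle here. The only point requiring care is handling the degenerate case $k\vec t\in\lat$ separately, which is exactly where \cref{item:distance_less_than_lambda1} is used; \cref{local-dense-item-three} alone does not cover it.
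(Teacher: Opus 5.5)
Your proof is correct and follows essentially the same route as the paper. You use the same case split on whether $k\vec t\in\lat$ (invoking \cref{item:distance_less_than_lambda1} in one case and \cref{local-dense-item-three} in the other) to get $\lambda_1^{(p)}(\lat+\Z\vec t)\ge\dist_p(\vec t,\lat)$, and then exhibit the $N$ vectors $\vec x-\vec t$ for $\vec x\in\CVP_p(\vec t,\lat)$.

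You rightly flag the aside $\rank(\lat+\Z\vec t)=n$ as unnecessary, but it is also false and should be deleted. For example, in \cref{thm:craig} the target $\vec t\in\{0,1\}^q$ has nonzero coordinate sum, so $\vec t\notin\spn(A_{q-1}^{(m)})$ and the rank rises to $q$. This is why \cref{thm:LDL-KN-equivalence} involves $\kappa_p(n+1)$.
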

\begin{proof}
\cref{lem:well-definedness of lattice} shows that $\lat+\Z\vec t$ is a lattice; in particular, it is discrete. For every $\vec w =\vec v+z\vec t$ for $\vec v\in\lat,z\in \Z$, if $z\vec t\in\lat$ then $\vec w\in\lat$ and $\|\vec w\|_p\ge\lambda_1^{(p)}(\lat)\ge\dist_p(\vec t,\lat)$ (unless $\vec w=\bm{0}$). Otherwise, $\|\vec w\|_p\ge\dist_p(-z\vec t,\lat)=\dist_p(|z|\vec t,\lat)\ge\dist_p(\vec t,\lat)$. Since $\dist_p(\vec t,\lat)$ is achieved by vectors in $\CVP_p(\vec t,\lat)-\vec t$, which has size at least $N$, this proves $\lambda_1^{(p)}(\lat+\Z\vec t) = \dist_p(\vec{t},\lat)$ and $\kappa_p(\lat+\Z\vec t) \geq N$.
\end{proof}

Next, we show a way to construct exact locally dense lattices from seemingly much less rigid objects. In particular, suppose that the number of vectors in a lattice $\lat$ at a certain distance $r$ to $\vec{t}$ is much larger than the number of vectors of length less than $r$ in $\lat + k\vec{t}$ for all $k$. (Notice that exact locally dense lattices are precisely the special case for which there are $N$ vectors at distance $r$ and no non-zero vectors in $\lat + k \vec{t}$ of length less than $r$.) Then, we can apply the sparsification procedure of~\cite{khotHardnessApproximatingShortest2005,dadushLatticeSparsificationApproximate2013,lattices_parsification,conf/innovations/BennettPT22}. With decent probability, this removes the short lattice vectors, while preserving a large number of close vectors (to a modified target), and hence produces an exact locally dense lattice.

\begin{theorem}[Exact locally dense lattice from sparsification]\label{thm:sparsification} 
Fix any $p\ge 1$. For all sufficiently large $n$, let $\lat\subseteq \R^m$ be a rank $n$ lattice, and $\vec{t}\in\mathrm{span}(\lat)\setminus\lat$ be a target vector.  For $r>0$, define $A = A(\lat,\vec t,r)$ and $G = G(\lat,\vec t,r)$ as
$$A :=|\{ \vec{y}-k\vec{t} :\vec{y}\in\lat, k \in \Z, \|\vec{y} - k \vec{t}\|_p < r\}|<\infty,\quad G :=|\{\vec{x}\in\lat:\|\vec x-\vec t\|_p=r\}| \ \text{.}$$
If $G/A\ge 64$, then there exists a rank $n$ sublattice $\lat'\subseteq\lat$ and target $\vec{t}'$ that form a $(p,n,G/(8A))$-exact locally dense lattice. In particular, $\kappa_p(\lat'+\Z\vec t') \geq G/(8A)$ by \cref{thm:kannans-embedding-formal}.
\end{theorem}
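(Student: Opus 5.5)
We apply \cref{lem:sparsification-25} with a suitably chosen prime $q$ and then check that, with positive probability, the resulting pair $(\lat',\vec t')$ satisfies all three conditions of \cref{def:locally-dense-lat} with $N = G/(8A)$. Let $s$ be the order of $\vec t$ modulo $\lat$ and $\vec d$ be as in the proposition. First note $\vec t\in\spn(\lat)$, so some multiple of $\vec t$ lies in $\lat$ only if $\vec t$ is rational with respect to $\basis$. If $s$ were infinite, $\lat+\Z\vec t$ would be non-discrete and $A$ would be infinite, which the statement excludes. So $s$ is finite and $s\ge 2$. Also $A\ge 1$ since $\vec 0$ is counted.

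\textbf{Choice of $q$.} By Bertrand's postulate, pick an odd prime $q$ with $4A< q\le 8A$ and $q\nmid s$. To guarantee $q\nmid s$, note that $s\le A$: the vectors $\vec y_k - k\vec t$ for $k=0,\dots,s-1$, with $\vec y_k$ a closest lattice vector to $k\vec t$, are distinct elements of $\lat+\Z\vec t$. Provided their lengths are $<r$, this gives $s\le A<q$. If that length bound fails, I would instead use that $s$ has few prime factors and pick among the primes in $(4A,8A]$, or simply a prime in $(4A, 8A]$ exceeding $s$ when $s$ is small. This detail I expect to need some care, perhaps using that $n$ is sufficiently large so that there are many primes in the interval. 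Then $q>A$, as the proposition requires.

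\textbf{Probability bounds.} By \cref{item-one-BPT25}, $\Pr[\lambda_1^{(p)}(\lat'+\Z\vec t')<r]\le (A-1)/q<1/4$. By \cref{item-two-BPT25} with $\delta=1/2$, $\Pr[N_p(\lat'-\vec t',r)\le G/(2q)]\le 8q/G\le 64A/G\le 1$. This is too weak, so I would take $\delta$ closer to $1$ and accept the smaller constant. The failure probability is then $Cq/(\delta^2G)\le 16A/(\delta^2 G)\le 1/(4\delta^2)$ when $G\ge 64A$. Taking $\delta=3/4$ gives failure probability at most $4/9$, so the total failure probability is below $1/4+4/9<1$. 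On success there are more than $G/(4q)\ge G/(32A)$ vectors of $\lat'$ within distance $r$ of $\vec t'$. To reach the claimed constant $G/(8A)$, I would instead take $q$ as close to $A$ as allowed, i.e.\ the smallest prime $q>2A$, so $q\le 4A$ by Bertrand. Then $(A-1)/q<1/2$, and with $\delta=1/2$ the second failure probability is at most $8q/G\le 32A/G\le 1/2$. This is borderline, so the constants need tuning; this bookkeeping is the main obstacle.

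\textbf{Exactness.} On the success event, $\lambda_1^{(p)}(\lat'+\Z\vec t')\ge r$. Since $\lat'\subseteq\lat$, any $\vec x\in\lat'$ with $\|\vec x-\vec t'\|_p\le r$ must have norm exactly $r$, because $\vec x-\vec t'\in\lat'+\Z\vec t'$. Hence $\dist_p(\vec t',\lat')=r$ and these are all closest vectors, of which there are at least $N$; this gives condition 1. Moreover, every nonzero $\vec v-k\vec t'$ with $\vec v\in\lat'$ has norm $\ge r$. This gives $\lambda_1^{(p)}(\lat')\ge r$, which is condition 2, and $\dist_p(k\vec t',\lat')\ge r$ whenever $k\vec t'\notin\lat'$, which is condition 3. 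Also $\vec t'\notin\lat'$, since otherwise $\vec t\in\lat$. The rank of $\lat'$ is $n$ by the proposition. Finally, \cref{thm:kannans-embedding-formal} gives the kissing number bound.
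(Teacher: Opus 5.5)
Your route is the paper's: apply the sparsification proposition with a Bertrand prime, union-bound the two failure events, and read off the three conditions of an exact locally dense lattice. Your verification of those conditions is correct and more explicit than the paper's.

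The constant bookkeeping you call the main obstacle is not one. Your last attempt is exactly the paper's computation, and it already closes. Take $q$ a prime strictly between $2A$ and $4A$ (Bertrand) and $\delta=1/2$. Then $(A-1)/q<1/2$ holds \emph{strictly}, and $8q/G<32A/G\le 1/2$. So the total failure probability is strictly below $1$, and on success there are more than $G/(2q)>G/(8A)$ closest vectors. The detour through $q\in(4A,8A]$ with $\delta=3/4$ is unnecessary and only yields $G/(32A)$.

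The step that genuinely fails is your justification of $q\nmid s$. Nothing forces $\dist_p(k\vec t,\lat)<r$; condition~3 asks for the opposite. So the argument for $s\le A$ breaks, and in fact $s$ is not controlled by $A$ at all. For instance, take $p=2$, $\lat=\{\vec x\in\Z^n:\sum_i x_i\equiv 0\pmod s\}$, $\vec t=\vec e_1$ and $r=\sqrt3$. Then $\lat+\Z\vec t=\Z^n$, $A=2n^2+1$, and $G=3\binom n3$ for every $s\ge 5$, so $G/A\ge 64$ for large $n$. Meanwhile $s$ may be the product of all primes in $(2A,8A]$. In that case no prime in either of your windows avoids $s$, and the claim that ``$s$ has few prime factors'' is false.

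The paper's proof does not check this hypothesis either; it applies the proposition directly. That is harmless in its applications, where $s=2$. For general $s$, one needs the modified sampling described in the remark after the proposition: $\vec a$ uniform subject to $\langle\vec a,\vec d\rangle\equiv 0\pmod q$, and $\vec z$ uniform. The paper states that this works but does not prove it.
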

\begin{proof}
Let $q$ be a prime in the range $(2A,4A)$.
Such a prime is guaranteed to exist by Bertrand's postulate.

We apply the sparsification result in~\cref{lem:sparsification-25} to obtain the sublattice $\lat'\subseteq\lat$ and target $\vec t'$. In particular, we have that $\lat'+\Z\vec t'$ is a lattice. More interestingly,
\[
\Pr[\lambda_1^{(p)}(\lat'+\Z\vec t')<r]\le \frac{A-1}{q}
\]
and
\[\Pr\Big[N_p(\lat'-\vec t',r)\le \frac{G}{2q} \Big]<\frac{8q}{G}
\; ,
\]
where we take $\delta=\frac12$ in~\cref{item-two-BPT25} of \cref{lem:sparsification-25}. This then implies that with probability at least $1-\frac{A-1}{q}-\frac{8q}{G}>\frac12-\frac{32A}{G}\ge 0$, all of the following hold:
\begin{enumerate}
    \item $\dist_p(\vec t',\lat')=r\le \lambda_1^{(p)}(\lat')$.
    \item $|\CVP_p(\vec t',\lat')|\ge \frac{G}{2q}>\frac{G}{8A}$.
    \item For integer $k$, either $k\vec t'\in\lat'$ or $\dist_p(\vec t',\lat')=r\le \dist_p(k\vec t',\lat')$.
\end{enumerate}
In other words, with non-zero probability, $\lat'$ and $\vec{t}'$ form a $(p,n,G/(8A))$-exact locally dense lattice.
\end{proof}
\section{Constructions from Craig lattices}

\subsection{Lattices with \texorpdfstring{$\kappa_p(n) \geq 2^{\Omega(\sqrt n)}$ for all $p$}{subexponentially large kissing number for all p}}

\label{sec:Craig}
We study the local density of Craig lattices with a randomly chosen boolean vector $\vec{t} \in \{0,1\}^q$ as the target. Note that for such vectors, the number of nonzero entries is equal to its $\ell_1$ norm.

To that end, suppose that $\vec{s},\vec{t}\in\{0,1\}^q$ satisfy
\begin{equation}
\|\vec{s}\|_1=\|\vec{t}\|_1,\quad
\sum_{i=1}^q s_i\cdot i^j  \equiv \sum_{i=1}^q t_i\cdot i^j\pmod q\quad j=1,\dots,m-1 \; . \label{eq:craig}
\end{equation}
Then, by the definition of the Craig lattice $A_{q-1}^{(m)}$ (\cref{def:craig}), notice that $\vec{t} - \vec s\in A_{q-1}^{(m)}$. 
Indeed, the vectors $\vec{s} \in \{0,1\}^q$ satisfying \cref{eq:craig} are exactly the boolean vectors in the coset $A_{q-1}^{(m)} + \vec{t}$.

Furthermore, $\|(\vec t-\vec s)-\vec t\|_p=\|\vec{s}\|_p = \|\vec{t}\|_p$. So, suppose $\vec{t} \in \{0,1\}^q$ is such that $\dist_p(\vec{t}, A_{q-1}^{(m)}) = \|\vec{t}\|_p$, and that there exist a large number of distinct boolean vectors $\vec{s}_1,\dots,\vec{s}_N$ satisfying \cref{eq:craig} with $\vec{t}$. Then, we obtain many closest vectors $\vec{t} - \vec{s}_i \in A_{q-1}^{(m)}$ to $\vec t$. 

Indeed, a simple pigeonhole argument shows that such a $\vec{t}$ must exist with $N \gtrsim e^{\sqrt{q}}$ when $m \approx \sqrt{q}$ is chosen appropriately. The following theorem makes this precise.

\begin{theorem}\label{thm:craig}
For any prime $q \geq 5$, there exists a target $\vec{t}\in\{0,1\}^q$ such that $(A_{q-1}^{(m)},\vec{t})$ is $(p,q-1,\Omega(e^{\sqrt q}q^{3/4}))$-exact locally dense for all $p \geq 1$ with $m :=  \lfloor\sqrt{q}/2\rfloor$.
\end{theorem}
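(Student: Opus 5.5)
We restrict attention to Boolean vectors of Hamming weight exactly $k := m$ (or $k = \lfloor \sqrt q\rfloor$-ish; we will pick $k$ with $k \le m$). The plan is to choose $\vec t$ by pigeonhole, then check the three conditions of \cref{def:locally-dense-lat}.

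\textbf{Pigeonhole.} Consider all $\binom{q}{k}$ vectors $\vec s\in\{0,1\}^q$ with $\|\vec s\|_1=k$. Map each to its syndrome $\bigl(\sum_i s_i i^j \bmod q\bigr)_{j=1}^{m-1}\in\Z_q^{m-1}$. By \cref{eq:craig}, two such vectors with the same syndrome differ by an element of $A_{q-1}^{(m)}$. There are at most $q^{m-1}$ syndromes, so some class has size at least $N:=\binom{q}{k}/q^{m-1}$. We take $\vec t$ to be any member of that class. With $m=\lfloor\sqrt q/2\rfloor$ we choose $k = 2m$, so that $k=\sqrt q\pm O(1)$; the role of $k=2m$ is explained below. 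Then \cref{fact:binom} gives $\binom qk=\Theta(q^{k/2-1/4}e^k)$. Dividing by $q^{m-1}=q^{k/2-1}$ yields $N=\Omega(q^{3/4}e^{k})=\Omega(e^{\sqrt q}q^{3/4})$, since $e^{k-\sqrt q}=\Theta(1)$. The vectors $\vec t-\vec s$ for $\vec s$ in the class are distinct lattice vectors, each at $\ell_p$ distance $\|\vec s\|_p=k^{1/p}$ from $\vec t$.

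\textbf{Distances.} We need $\dist_p(\vec t,A_{q-1}^{(m)})=k^{1/p}$ together with \cref{item:distance_less_than_lambda1} and \cref{local-dense-item-three}. The key observation is that every $\vec x\in\Z^q$ with $\sum x_i = c$ has $\|\vec x\|_p\ge \|\vec x\|_1^{1/p}\ge |c|^{1/p}$, because the entries are integers and so $|x_i|^p\ge|x_i|$. For any $\vec v\in A_{q-1}^{(m)}$ and integer $z$, the vector $\vec v - z\vec t$ is integral with coordinate sum $-zk$. Hence $\|\vec v-z\vec t\|_p\ge (|z|k)^{1/p}\ge k^{1/p}$ whenever $z\ne 0$. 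This gives $\dist_p(z\vec t,A_{q-1}^{(m)})\ge k^{1/p}=\dist_p(\vec t,A_{q-1}^{(m)})$ for all $z\neq0$, and equality for $z=1$, which is attained by $\vec s=\vec t$. The case $z=0$ is excluded since $0\cdot\vec t\in A_{q-1}^{(m)}$. Also $\vec t\notin A_{q-1}^{(m)}$, because its coordinate sum is $k\neq0$. For \cref{item:distance_less_than_lambda1}, \cref{thm:minimum bound} (valid since $m\le q/2$) gives $\lambda_1^{(p)}\ge(2m)^{1/p}$. This is exactly why we take $k\le 2m$; with $k=2m$ we have $k^{1/p}\le\lambda_1^{(p)}$. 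All of this is uniform in $p$, because the class and $\vec t$ do not depend on $p$, so one $\vec t$ works for all $p\ge1$. The rank is $q-1$.

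\textbf{Main obstacle.} The step most likely to fail is the parameter bookkeeping in the pigeonhole count. The exponent of $e$ must come out exactly $\sqrt q-O(1)$, so $k=2\lfloor\sqrt q/2\rfloor$ must be within $O(1)$ of $\sqrt q$, which it is. The power of $q$ must come out as $q^{3/4}$: we get $q^{k/2-1/4}/q^{m-1}=q^{3/4}$ exactly when $k=2m$. We also need \cref{fact:binom} to apply in the regime $k=\sqrt q\pm O(1)$, and $q\ge5$ ensures $m\ge1$. If the constants needed adjusting, we would instead take $k=2m$ or $2m-1$, whichever is closer to $\sqrt q$; either choice still satisfies $k\le 2m$.
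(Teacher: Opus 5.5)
Your proposal is correct and follows essentially the same route as the paper: pigeonhole over weight-$2m$ Boolean vectors via the syndrome map into $\Z_q^{m-1}$, then \cref{fact:binom} to get $\Omega(e^{\sqrt q}q^{3/4})$ closest vectors, \cref{thm:minimum bound} for $\lambda_1^{(p)}$, and the integer coordinate-sum bound $\|\vec v - z\vec t\|_p^p \ge |z|\cdot 2m$ for the multiples of $\vec t$. The only blemish is your closing hedge: $k=2m-1$ would yield only $q^{1/4}$ instead of $q^{3/4}$ (as your own bookkeeping shows), so $k=2m$ is the choice that actually proves the stated bound.
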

\begin{proof}
Consider the lattice $A_{q-1}^{(m)}$, note $1\le m=\lfloor\frac{\sqrt q}{2}\rfloor\le\frac{q}2$. For any vector $\vec{s}\in\{0,1\}^q$ define $f:\{0,1\}^q\to \Z_q^{m-1}$ as %

\[
f(\vec{s}) = \left(\sum_{i=1}^q s_i\cdot i^j \bmod q\right)_{1\le j<m}.
\] 

Recalling the definition of the Craig lattice $A_{q-1}^{(m)}$ from \cref{def:craig}, we see that for any $\vec{s}, \vec{t} \in \{0,1\}^q$, we have that $\vec{t} - \vec{s} \in A_{q-1}^{(m)}$ if (and only if) $\|\vec{s}\|_1 = \|\vec{t}\|_1$ and $f(\vec{s}) = f(\vec{t})$.
Since all coordinates of $f(\vec{s})$ are in $\Z_q$, we see that there are at most $q^{m-1}$ values for $f(\vec{s})$. It follows from the pigeonhole principle that there must be some fixed value of $f(\vec{s})$ that is achieved by at least
\[\frac{\binom{q}{2m}}{q^{m-1}}
\]
distinct choices of $\vec{s} \in \{0,1\}^q$ with $\|\vec{s}\|_1 = 2m$. Using~\cref{fact:binom} and that $\sqrt q-2<2m\le \sqrt q$, we have 
\[
\binom{q}{2m}\geq \Omega\left(q^{m-\frac14}e^{2m}\right).
\]
Hence 
\[
\frac{\binom{q}{2m}}{q^{m-1}}\geq \Omega\left(\frac{q^{m-\frac14}e^{2m}}{q^{m-1}}\right)\geq\Omega(e^{\sqrt q}q^{3/4}).
\]
Let $\vec{t} \in \bit^q$ be such that $f(\vec{s}) = f(\vec{t})$ and $\|\vec{s}\|_1 = \|\vec{t}\|_1$ for at least $\binom{q}{2m}/q^{m-1}$ elements $\vec{s} \in \bit^q$.
Then for such $\vec{t}$ and $\vec{s}$, we have $\|(\vec{t}- \vec{s}) - \vec{t}\|_p = \|\vec{s}\|_p= (2m)^{1/p}$, so that there are at least $N$ lattice vectors with distance exactly $(2m)^{1/p}$ from $\vec{t}$.

To show that this is an exact locally dense lattice, it therefore suffices to show that $\lambda_1^{(p)}(A_{q-1}^{(m)}) \geq (2m)^{1/p}$ and that $\dist_p( k\vec{t},A_{q-1}^{(m)}) \geq (2m)^{1/p}$ for all $k \geq 1$. In fact, the lower bound on $\lambda_1^{(p)}(A_{q-1}^{(m)})$ is exactly \cref{thm:minimum bound}. To show that $\dist_p( k\vec{t},A_{q-1}^{(m)}) \geq (2m)^{1/p}$, let $\vec{v} \in A_{q-1}^{(m)}$ and consider $\vec{w} := \vec{v} - k\vec{t} \in \Z^q$. Note that $\sum_{i=1}^q v_i = 0$ by the definition of $A_{q-1}^{(m)}$, so that 
\[
\|\vec{w}\|_p^p\ge\|\vec{w}\|_1\ge\bigg|\sum_{i=1}^qw_i\bigg| = \bigg|\sum_{i=1}^qv_i-k(2m)\bigg|=k(2m)\ge 2m
\; ,
\]
as needed.
\end{proof}

We then get the following corollary.

\begin{corollary}\label{cor:craig kn}
   For any prime $q \geq 5$, there exists $\vec{t}\in\{0,1\}^q$ such that $\kappa_p(A_{q-1}^{(m)}+\Z\vec{t})\ge \Omega(e^{\sqrt q}q^{3/4})=2^{\Omega(\sqrt q)}$ for all $p \geq 1$, where $m:=\lfloor\sqrt{q}/2\rfloor$.
\end{corollary}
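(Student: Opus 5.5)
The corollary should follow by composing \cref{thm:craig} with \cref{thm:kannans-embedding-formal}; essentially no new argument is needed. Fix a prime $q \ge 5$ and set $m := \lfloor \sqrt{q}/2 \rfloor$. By \cref{thm:craig}, there is a single target $\vec{t} \in \{0,1\}^q$ such that $(A_{q-1}^{(m)}, \vec{t})$ is a $(p, q-1, N)$-exact locally dense lattice for every $p \ge 1$ simultaneously, where $N = \Omega(e^{\sqrt{q}} q^{3/4})$.

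Uniformity in $p$ matters here. The choice of $\vec{t}$ in the proof of \cref{thm:craig} is made by pigeonholing on $f$ and the Hamming weight. These quantities do not depend on $p$, so the same $\vec{t}$ works for all $p$. Hence the lattice $A_{q-1}^{(m)} + \Z\vec{t}$ is also independent of $p$.

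For each fixed $p \ge 1$, I then apply \cref{thm:kannans-embedding-formal} to this exact locally dense lattice. \Cref{lem:well-definedness of lattice} guarantees that $A_{q-1}^{(m)} + \Z\vec{t}$ is a lattice. The theorem then gives $\kappa_p(A_{q-1}^{(m)} + \Z\vec{t}) \ge N = \Omega(e^{\sqrt{q}} q^{3/4})$.

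It remains to note that $e^{\sqrt{q}} q^{3/4} \ge e^{\sqrt{q}} = 2^{\sqrt{q}/\ln 2}$, which is $2^{\Omega(\sqrt{q})}$. No step is really an obstacle. The only point to be careful about is that the target and the lattice are chosen once, before quantifying over $p$, and this is exactly what the statement of \cref{thm:craig} provides.
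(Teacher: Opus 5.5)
Your proposal is correct and is the same as the paper's proof: the corollary comes from applying \cref{thm:kannans-embedding-formal} to the exact locally dense lattice given by \cref{thm:craig}. Your explicit check that $\vec{t}$ is chosen independently of $p$ (via pigeonholing on $f$ and Hamming weight) is a good way to confirm the ``for all $p$'' uniformity, which the paper leaves built into the statement of \cref{thm:craig}.
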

\begin{proof}
    The result follows by applying~\cref{thm:kannans-embedding-formal} to the exact locally dense lattice of~\cref{thm:craig}.
\end{proof}
 
\begin{remark}[Extension to all $n$]\label{rmk:padding}
We can remove the restriction of~\cref{cor:craig kn} to prime $q$ by a standard padding argument. Suppose that, for every sufficiently large prime $q$, we have constructed a rank $q$ lattice $\lat$ satisfying $\kappa_p(\lat)\ge \Omega(e^{\sqrt q}q^{3/4})$ (note $A_{q-1}^{(m)}+\Z\vec t$ has rank $q$). For any $n\ge q$, we can pad $\lat$ into a rank $n$ lattice $\lat'$ in a way that preserves relevant shortest vectors (e.g., adding long vectors orthogonal to $\lat$). Hence $\kappa_p(n)\ge \kappa_p(\lat') = \kappa_p(\lat)$.
By the prime number theorem, there always exists a prime $q$ with $n-o(n) < q \leq n$, and applying this construction yields 
\[
\kappa_p(n) \geq \Omega(e^{\sqrt{q}}) > e^{\sqrt{n} - o(\sqrt{n})} \ \text{.}
\]
\end{remark}

\subsection{An explicit construction with \texorpdfstring{$\kappa_p(n)\ge 2^{\Omega(n^{1/4}\log n)}$}{kappa(n) >= 2**Omega(n**1/4 log n)} for all \texorpdfstring{$p$}{p}}
\label{sec:Wan}

In his breakthrough proof of deterministic $\NP$-hardness of $\SVP$ in the $\ell_2$ norm~\cite{wan2026nphardnesssvpeuclideanspace}, Wan proved that an explicitly chosen vector $\vec{y}$ is close to a large number of vectors in the Reed-Solomon lattice $\lat_{q,k}$ (as defined in \cref{def:RS}). %

\begin{theorem}[{\cite[Theorem~6.3]{wan2026nphardnesssvpeuclideanspace}}]\label{thm:locally dense construction}
    Choose $0<\varepsilon<1$ and $\varepsilon_1$ such that $0<\varepsilon_1<\frac{\varepsilon}{2(1+\varepsilon)}< \frac14$. Define positive integers $k,h$ as 
    \[
k = \left\lfloor \frac{q^{\varepsilon_1}}{2}\right\rfloor,\quad h = \lfloor(1+\varepsilon)k\rfloor\le q/2.
    \]
    For $p\ge 1$, the Reed-Solomon lattice $\lat_{q,k}$ in $\R^q$ satisfies the following two properties:
    \begin{enumerate}
        \item $\lambda_1^{(p)}(\lat_{q,k})\ge (2k)^{1/p}$, and
        \item Let $\vec{y} = (1,\dots,1,0,\dots,0)^\top$ be the binary vector in $\Z^q$ with Hamming weight $h$. Define
        \[
        S_{2}(\vec y,h):=\{\vec{x}\in(\lat_{q,k}+\vec{y})\cap \{0,1\}^q:\|\vec{x}\|_1=h\}.
        \]
        Then, for all sufficiently large primes $q$, we have $|S_2(\vec y,h)|\ge q^{\varepsilon_1(1+\varepsilon)k}.$
    \end{enumerate}
\end{theorem}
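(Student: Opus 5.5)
The theorem has two parts. Part~1 follows by adapting \cref{thm:minimum bound}; part~2, the count of $|S_2(\vec y,h)|$, is the main obstacle.

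\textbf{Minimum distance.} Let $\vec{x}\in\lat_{q,k}$ be non-zero. The conditions $\sum_i i^j x_i\equiv 0 \pmod q$ for $j=0,\dots,k-1$ are parity checks of a Reed--Solomon code with minimum distance $k+1$. Hence, if $\vec x\not\equiv \vec 0 \pmod q$, its reduction mod $q$ has at least $k+1$ non-zero coordinates.

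Split into two cases according to $\sum_i x_i$.
\begin{itemize}
\item If $\sum_i x_i=0$, then $\vec x$ lies in the Craig lattice $A_{q-1}^{(k)}$. Since $k\le q/2$, \cref{thm:minimum bound} gives $\|\vec x\|_p\ge (2k)^{1/p}$.
\item If $\sum_i x_i\neq 0$, then $\sum_i x_i$ is a non-zero multiple of $q$, so $\|\vec x\|_p^p\ge\|\vec x\|_1\ge q\ge 2k$.
\end{itemize}
This proves part~1. It is also the reason $\vec y$ has integer entries and the norm bound passes from $\ell_1$ to $\ell_p$.

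\textbf{Counting $S_2(\vec y,h)$.} The set $S_2(\vec y,h)$ consists of the weight-$h$ binary vectors $\vec x$ whose first $k$ power sums $\sum_i x_i i^j$ ($j=0,\dots,k-1$) match those of $\vec y$ mod $q$. Equivalently, it consists of $h$-subsets $T\subseteq\F_q$ whose first $k$ power sums equal those of $\{1,\dots,h\}$.

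By Newton's identities (valid since $k<q$), matching the power sums $p_1,\dots,p_{k-1}$ is the same as matching the elementary symmetric functions $e_1,\dots,e_{k-1}$. The $p_0$ condition is automatic from $|T|=h$ (if $h<q$). So we must count $h$-subsets $T$ whose polynomial $\prod_{a\in T}(X-a)$ agrees with $P(X):=\prod_{a=1}^h(X-a)$ in its top $k$ coefficients.

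Such polynomials have the form $P(X)+R(X)$ with $\deg R\le h-k$. I would exhibit many of them that split into distinct linear factors over $\F_q$. Concretely, I would try $R=c\cdot Q$ for a structured family. For example, choose subsets $T$ of a set $U$ that splits completely and is closed under translation or multiplication by a subgroup. Then lower-order coefficients can vary freely, with $h-k\approx\varepsilon k$ degrees of freedom. Each free coefficient ranges over roughly $q$ values, which suggests a count of about $q^{h-k+1}$ or more.

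The target bound $q^{\varepsilon_1(1+\varepsilon)k}=(2k)^{h}$-ish is far weaker. It is about the number of ways to choose $h$ elements from a set of size about $q^{\varepsilon_1}$, reflecting that $k\approx q^{\varepsilon_1}/2$. This suggests the intended argument is a pigeonhole or averaging count rather than an explicit parametrization.

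\textbf{Main obstacle and what I would try first.} The hard step is showing that the specific explicit $\vec y$ (an initial segment), not just some $\vec y$, attains many solutions. I would attempt Wan's approach: realize the solution set as the fiber of the map $T\mapsto (e_1,\dots,e_{k-1})(T)$ over the point $e(\{1,\dots,h\})$. I would then use Weil-type character sum bounds to show the map is nearly equidistributed on $h$-subsets of $\F_q$.

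Equidistribution would give
\[
|S_2(\vec y,h)|\approx \binom{q}{h}\big/q^{k-1}\ \ge\ q^{h-k}\cdot (\text{const}/h)^h .
\]
Since $h\approx(1+\varepsilon)k$ and $h\log h\approx \varepsilon_1 h\log q$, this is at least $q^{\varepsilon_1 h}$ once $\varepsilon_1<\varepsilon/(2(1+\varepsilon))$ makes the loss affordable.

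The error term in the Weil bound is controlled by the degree $h\le q^{1/4}\ll\sqrt q$, and this is exactly why $\varepsilon_1<1/4$ is needed. Verifying that this error is smaller than the main term for every fiber, including the explicit one, is the crux.
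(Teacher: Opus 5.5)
\textbf{Part~1 is fine; part~2 has a genuine gap.}

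Your part~1 is correct. A nonzero $\vec x\in\lat_{q,k}$ either has $\sum_i x_i=0$, so it lies in $A_{q-1}^{(k)}$ and \cref{thm:minimum bound} applies, or has $|\sum_i x_i|\ge q$, so $\|\vec x\|_p^p\ge\|\vec x\|_1\ge q\ge 2k$. Your reformulation of part~2 is also right: weight-$h$ subsets of $\F_q$ whose power sums $p_1,\dots,p_{k-1}$ (equivalently $e_1,\dots,e_{k-1}$) agree with those of $\{1,\dots,h\}$.

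But part~2 is the substance of the theorem, you explicitly leave its crux unverified, and the tool you propose cannot supply it. Suppose you count ordered $h$-tuples with the prescribed power sums via additive characters, and bound each one-variable sum $\sum_{x\in\F_q}\psi\bigl(\sum_{1\le j<k}u_jx^j\bigr)$ by Weil's bound $(k-2)\sqrt q$. Then the nonzero frequencies contribute up to $(k-2)^h q^{h/2}$, while the main term is only $q^{h-k+1}$. Since $h\le(1+\varepsilon)k<2k-2$ for large $k$, we have $q^{h/2}>q^{h-k+1}$. So the error swamps the main term no matter how small $h$ is compared with $\sqrt q$. The constraint $h\ll\sqrt q$ that you cite is not the operative one; it would allow any $\varepsilon_1<1/2$. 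Similarly, your heuristic that the $q^{h-k+1}$ polynomials $P+R$ with $\deg R\le h-k$ yield about that many split ones misses a factor of roughly $1/h!$.

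\textbf{What is missing.} You need square-root cancellation in essentially all $h-k$ dimensions of the solution set, not one variable at a time. Concretely, this means a Deligne-type point count for
$V=\{\vec x\in\F_q^h:\sum_i x_i^j=\sum_{i\le h}i^j,\ 1\le j<k\}$,
with main term of order $q^{h-k+O(1)}$ and error of order $h^h q^{(h-k)/2+O(1)}$. Two further steps are required:
\begin{itemize}
\item a check that the singular locus is small (the Vandermonde-type Jacobian of $(p_1,\dots,p_{k-1})$ has full rank at any point with at least $k-1$ distinct coordinates);
\item removal of tuples with repeated coordinates before dividing by $h!$.
\end{itemize}

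The paper imports this theorem from Wan, and the one piece of his argument it quotes, inequality (6.3.2) in \cref{app:Wan_stuff}, has exactly this shape. For $h=2k$ its main term is $\binom{2k}{2}q^k$ and its error is $\binom{2k}{2}(2k)^{2k}q^{(k+2)/2}$. With such an estimate, the hypothesis $\varepsilon_1<\varepsilon/(2(1+\varepsilon))$ is precisely the condition $h^h\approx q^{\varepsilon_1 h}<q^{(h-k)/2}$. As your own arithmetic shows, the same inequality gives $q^{h-k}/h!\ge q^{\varepsilon_1 h}$ for large $q$.

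Because this kind of bound is uniform in the right-hand side, the explicitness of $\vec y$ is not the real obstacle. The pigeonhole route you mention (as in \cref{thm:craig}) would only produce some non-explicit target. It would not give the stated lower bound on $|S_2(\vec y,h)|$ for the given $\vec y$.
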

Setting $\varepsilon_1=\frac14-\delta$ for any $0<\delta<\frac14$ and taking $\varepsilon$ close to one such that $\varepsilon_1<\frac{\varepsilon}{2(1+\varepsilon)}$ is satisfied, we obtain $q^{\Omega(q^{1/4-\delta})}$ close vectors to $-\vec y$ in $\lat_{q,k}$. We now 
give an endpoint refinement of this theorem, the proof of which is deferred to~\cref{app}.

\begin{restatable}[Refinement of~{\cite[Theorem~6.3]{wan2026nphardnesssvpeuclideanspace}}]{theorem}{corRS}\label{cor:refinement of wan}
    Fix $p\ge 1$, and take $k=\lfloor\alpha q^{1/4}/2\rfloor$ for some $0<\alpha<1$. Let $\vec{y} = (1,\dots,1,0,\dots,0)^\top$ be the binary vector in $\Z^q$ with Hamming weight $2k$. For all sufficiently large primes $q$, we have
  $|S_2(\vec y,2k)|\ge 2^{\Omega(q^{1/4}\log q)}$, where $S_2$ is as defined in \cref{thm:locally dense construction}.
\end{restatable}

\begin{theorem}\label{thm: Wan locally dense to Craig}
    Using the same parameters as in~\cref{cor:refinement of wan}, $A_{q-1}^{(k)}$ and $\vec y$ form a $(p,q-1,2^{\Omega(q^{1/4}\log q)})$-exact locally dense lattice.
\end{theorem}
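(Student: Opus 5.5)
We follow the template of the proof of \cref{thm:craig}, now with target $\vec y$ of Hamming weight $2k$. There are three things to check for $(A_{q-1}^{(k)},\vec y)$: many closest vectors, the bound $\dist_p(\vec y, A_{q-1}^{(k)})\le\lambda_1^{(p)}(A_{q-1}^{(k)})$, and the condition on multiples $j\vec y$. The rank is $q-1$, since $A_{q-1}^{(k)}$ lies in the hyperplane orthogonal to $(1,\dots,1)^\top$ and is a full-rank sublattice there.

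\textbf{Many closest vectors.} By \cref{cor:refinement of wan}, $|S_2(\vec y,2k)|\ge 2^{\Omega(q^{1/4}\log q)}$. Take any $\vec x\in S_2(\vec y,2k)$. Then $\vec x\in\{0,1\}^q$, $\|\vec x\|_1=2k=\|\vec y\|_1$, and $\vec x-\vec y\in\lat_{q,k}$. Since $\vec x$ and $\vec y$ have equal coordinate sums, $\sum_i(x_i-y_i)=0$ exactly over $\Z$, not just mod $q$. By the remark after \cref{def:RS}, $A_{q-1}^{(k)}$ is exactly $\lat_{q,k}$ intersected with the hyperplane $\sum_i x_i=0$, so $\vec y-\vec x\in A_{q-1}^{(k)}$. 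Each such lattice vector satisfies $\|(\vec y-\vec x)-\vec y\|_p=\|\vec x\|_p=(2k)^{1/p}$, and distinct $\vec x$ give distinct lattice vectors.

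\textbf{Minimum distance and multiples.} Since $k=\lfloor\alpha q^{1/4}/2\rfloor\le q/2$, \cref{thm:minimum bound} gives $\lambda_1^{(p)}(A_{q-1}^{(k)})\ge(2k)^{1/p}$. For an integer $j\ne0$ and $\vec v\in A_{q-1}^{(k)}$, set $\vec w=\vec v-j\vec y\in\Z^q$. Because $\vec w$ is an integer vector,
\[
\|\vec w\|_p^p\ge\|\vec w\|_1\ge\Bigl|\sum_i w_i\Bigr|=|j|\cdot 2k\ge 2k .
\]
Hence $\dist_p(j\vec y,A_{q-1}^{(k)})\ge(2k)^{1/p}$. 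Combined with the vectors found above, this gives $\dist_p(\vec y,A_{q-1}^{(k)})=(2k)^{1/p}$, which is at most $\lambda_1^{(p)}$. The same computation shows $\dist_p(j\vec y,A_{q-1}^{(k)})\ge \dist_p(\vec y,A_{q-1}^{(k)})$ for every $j\ne0$, and in particular $\vec y\notin A_{q-1}^{(k)}$. All three properties of \cref{def:locally-dense-lat} therefore hold.

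The only step needing care is passing from the Reed--Solomon lattice, where \cref{cor:refinement of wan} supplies the close vectors, to the Craig lattice. This works because equal Hamming weight forces the coordinate sum of $\vec x-\vec y$ to vanish exactly. Everything else repeats the argument for \cref{thm:craig}, and the counting itself is supplied by \cref{cor:refinement of wan}.
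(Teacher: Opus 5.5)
Your proof is correct and follows essentially the same route as the paper. Both arguments move the Reed--Solomon close vectors into $A_{q-1}^{(k)}$ using the exactly vanishing coordinate sum, and then use the integer coordinate-sum bound $\|\vec v - j\vec y\|_p^p \ge |j|\cdot 2k$ together with \cref{thm:minimum bound} to get the distance and multiples conditions; you also spell out negative $j$ and the rank, which the paper leaves implicit.
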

\begin{proof}
    Recall $A_{q-1}^{(k)}=\lat_{q,k}\cap\{\vec x\in\R^q:\sum_{i=1}^q x_i=0\}$. For any vector $\vec{u}\in S_2(\vec y,2k)$, we see that $\vec{u}-\vec{y}\in\lat_{q,k}$. Notice $\sum_{i=1}^q(\vec{u}-\vec{y})_i=0$ as $\vec{u},\vec{y}$ are boolean vectors with equal Hamming weight. This implies $\pm(S_2(\vec y,2k)-\vec{y})\subseteq A_{q-1}^{(k)}$. 

    Let $\vec{v}:=\vec{w}-z\vec{y}$ for some $\vec{w}\in A_{q-1}^{(k)}$ and integer $z\ge 1$. Since $\sum_{i=1}^qw_i =  0 $ by the definition, we see that
    \[
    \sum_{i=1}^qv_i= -z\sum_{i=1}^qy_i= -2zk,
    \]
    since $\vec{y}$ has Hamming weight $2k$. Therefore $\|\vec{v}\|_p^p\ge\left|\sum_{i=1}^qv_i\right|= 2zk$, where we use the fact that $\vec{v}\in\Z^q$. For $z=1$ this shows $\dist_p(\vec{y},A_{q-1}^{(k)})\ge(2k)^{1/p}$, with equality obtained by vectors in $\vec{y}-S_2(\vec y,2k)$. For $z\ge 2$, this proves $\dist_p(z\vec{y},A_{q-1}^{(k)})\ge\dist_p(\vec{y},A_{q-1}^{(k)})$. Since~\cref{thm:minimum bound} shows $\lambda_1^{(p)}(A_{q-1}^{(k)})\ge (2k)^{1/p}$, \cref{cor:refinement of wan} implies $(A_{q-1}^{(k)},\vec{y})$ is a $(p,q-1,2^{\Omega(q^{1/4}\log q)})$-exact locally dense lattice, as needed.
\end{proof}
\begin{corollary}[Explicit construction for $p\ge 1$] \label{cor:explicit-construction}
     Using the same parameters as in~\cref{cor:refinement of wan}, $\kappa_p(A_{q-1}^{(k)}+\Z\vec{y})\ge2^{\Omega(q^{1/4}\log q)}$, and such a lattice is deterministically constructed in $\poly(q)$-time.
\end{corollary}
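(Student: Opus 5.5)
The corollary has two parts: the kissing-number bound and the deterministic construction. For the bound, I will apply \cref{thm:kannans-embedding-formal} directly to the exact locally dense lattice $(A_{q-1}^{(k)},\vec{y})$ from \cref{thm: Wan locally dense to Craig}. That theorem gives that $(A_{q-1}^{(k)},\vec{y})$ is a $(p,q-1,2^{\Omega(q^{1/4}\log q)})$-exact locally dense lattice for every $p\ge 1$, with $k=\lfloor \alpha q^{1/4}/2\rfloor$ and $q$ a sufficiently large prime. \cref{lem:well-definedness of lattice} then shows that $A_{q-1}^{(k)}+\Z\vec{y}$ is a lattice, and \cref{thm:kannans-embedding-formal} gives $\kappa_p(A_{q-1}^{(k)}+\Z\vec{y})\ge 2^{\Omega(q^{1/4}\log q)}$. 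Since neither $\vec{y}$ nor $k$ depends on $p$, the same lattice works simultaneously for all $p$.

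The remaining task is explicitness: I need to output a basis of $A_{q-1}^{(k)}+\Z\vec{y}$ deterministically in $\poly(q)$ time. The steps are as follows.

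\begin{enumerate}
\item Fix $\alpha$ (say $\alpha=1/2$) and compute $k=\lfloor \alpha q^{1/4}/2\rfloor$ and $\vec{y}=(1,\dots,1,0,\dots,0)^\top$ with Hamming weight $2k$. This is trivial.
\item Obtain a generating set for $A_{q-1}^{(k)}$. By \cref{def:craig}, $A_{q-1}^{(k)}$ is the integer kernel of the linear map $\vec{x}\mapsto \bigl(\sum_i x_i,\ (\sum_i i^j x_i \bmod q)_{1\le j<k}\bigr)$.
\item Two ways to get such a set:
\begin{itemize}
\item Take generators $q\vec{e}_i$ together with an $\F_q$-basis of the kernel of the $(k-1)\times q$ Vandermonde-type matrix $(i^j \bmod q)$, lifted to $\Z^q$. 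This gives a basis of the Reed--Solomon lattice $\lat_{q,k}$ in Hermite normal form, of polynomial bit size. Then intersect with the hyperplane $\sum_i x_i=0$, which is again a kernel computation.
\item Alternatively, use the ideal description $((x-1)^k)\subseteq \Z[x]/(x^q-1)$. The shifts $x^i(x-1)^k$ for $0\le i<q$ generate the ideal.
\end{itemize}
\item Append $\vec{y}$ to the generating set.
\item Convert the resulting generating set of $A_{q-1}^{(k)}+\Z\vec{y}$ into a basis by HNF or LLL-based basis reduction. This runs in polynomial time for integer (here rational) generating sets with entries of size $\poly(q)$.
\end{enumerate}

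I expect no real obstacle. The only step that needs care is confirming the polynomial-time basis computation from a generating set, which is standard. The deep content sits in \cref{cor:refinement of wan} and \cref{thm: Wan locally dense to Craig}, which are assumed. If one wants a rank-$n$ lattice for arbitrary $n$, I would add the padding from \cref{rmk:padding}, choosing the prime $q\le n$ deterministically by trial division in $\poly(n)$ time.
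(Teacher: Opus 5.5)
Your proposal is correct and matches the paper's proof, which applies \cref{thm:kannans-embedding-formal} to the exact locally dense lattice of \cref{thm: Wan locally dense to Craig} and says the polynomial running time is immediate from the construction; your explicit basis computation just fills in that remark. One cosmetic slip: the kernel lattice cut out by the $(k-1)\times q$ matrix with rows $j=1,\dots,k-1$ is not quite $\lat_{q,k}$, which also imposes the $j=0$ congruence, but this is harmless because intersecting with $\sum_i x_i=0$ gives $A_{q-1}^{(k)}$ either way.
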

\begin{proof}
    The result follows by applying~\cref{thm:kannans-embedding-formal} to the exact locally dense lattice of~\cref{thm: Wan locally dense to Craig}. The running time is immediate from the construction.
\end{proof}
\begin{remark}
    Using a similar padding argument as~\cref{rmk:padding}, we get explicitly constructed lattices achieving $\kappa_p (n)\ge 2^{\Omega(n^{1/4}\log n)}$ for all sufficiently large $n$. And it is immediate that such lattices can be constructed deterministically in $\poly(n)$-time.
\end{remark}
\section{Constructions from sparsified direct sums}
\label{sec:sparsification}

In this section, we apply~\cref{thm:sparsification}
to obtain lattices $\lat$ with large kissing number $\kappa_p(\lat)$.
Following the notation of~\cref{thm:sparsification}, for a lattice $\lat \subset \R^n$, target $\vec t$ and $r>0$ we define
$$
A = A(\lat,\vec t,r) :=|\{\vec y-k\vec t:\vec y\in\lat,k\in\Z,\|\vec y-k\vec t\|_p<r\}|,\  G = G(\lat,\vec t,r) :=|\{\vec x\in\lat:\|\vec x-\vec t\|_p=r\}|.
$$
Recall in particular that, given such $\lat$ and $\vec{t}$, \cref{thm:sparsification} allows us to construct a lattice $\lat' \subseteq \R^n$ with kissing number $\kappa_p(\lat') \gtrsim G/A$.
For our particular constructions, computing $G$ will be relatively straightforward. To bound $A$, we use Theta functions as in~\cite{mazoLatticePointsHighdimensional1990,elkiesPackingDensitiesSuperballs1991,AS18}. In fact, our construction generalizes an earlier construction from \cite{AS18}. We first quickly present the \cite{AS18} construction in our language.

\subsection{Lattices with \texorpdfstring{$\kappa_p(n)\ge 2^{\Omega(n)}$}{kappa-p(n) >= 2**(Omega(n))} for \texorpdfstring{$p>p_0\approx 2.1397$}{p > p0 approximately 2.1397}}
\label{sec:Z^n-local-density}

In this subsection, we present a simple construction that achieves exponential kissing number. \cite{AS18} already showed how to use the ideas below to construct a certain kind of locally dense lattice. Here, we show that the same ideas yield an \emph{exact} locally dense lattice. The construction that we present in \cref{sec:Dm_rotation_thing} is a strict generalization of this construction, but this section serves as a useful warm up and motivates the more complicated construction in \cref{sec:Dm_rotation_thing}.

For any $p\ge 1$, we write \[\alpha_{p,1}:=2/c_p
\;, \]
where \[
c_p :=\min_{\tau>0}e^{\tau/2^p}\sum_{z\in\Z}e^{-\tau|z|^p}
\]
as defined in~\cref{thm:ball_point_count}. Note that this notation is consistent with the more general definition of $\alpha_{p,m}$ in~\cref{cor: alpha to kissing number}, which is why we use it here.\footnote{This is because when $m=1$, $\M_1$ of~\cref{cor: alpha to kissing number} is exactly $2\Z$, so $\alpha_{p,1}=2/(\min_{\tau>0}e^{\tau}\Theta_p(2\Z,\tau))$. The denominator is equal to $\min_{\tau>0}e^{\tau}(\sum_{z\in\Z}e^{-\tau 2^p\cdot |z|^p})$. Changing variables from $\tau$ to $\tau/2^p$ gives exactly $c_p$, and hence $\alpha_{p,1}=2/c_p$.} 

The following theorem shows that for the lattice $\lat =\Z^n$ and target $\vec{t} = (1/2,\ldots,1/2)^\top$, $G/A \gtrsim \alpha_{p,1}^n$, where $G$ and $A$ are the number of ``good'' and ``annoying'' vectors respectively, as defined in \cref{thm:sparsification}. This is essentially immediate from our definitions and \cref{thm:ball_point_count} (closely following \cite{AS18}). For completeness, we include in \cref{fig:alpha1} a plot of $\alpha_{p,1}$. Also note that as $p \to \infty$, $\alpha_{p,1} \to 2$ (which is immediate from the definition).

\begin{figure}[!ht]
    \def\arraystretch{3}
    \centering
    \begin{tabular}{@{}l@{}}
    \includegraphics[width=0.9\textwidth]{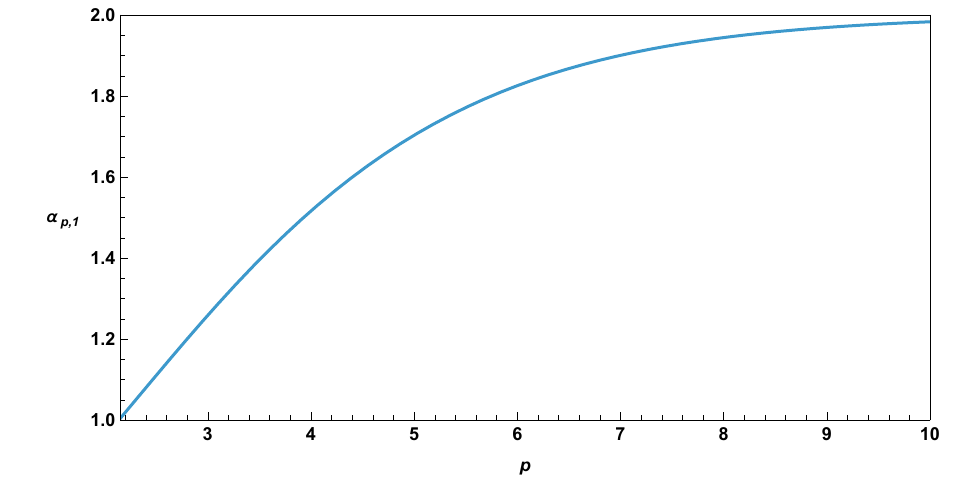}
    \end{tabular}
    \caption[Plot of $\alpha_{p,1}$]{Numerical estimates of $\alpha_{p,1}$ for different values of $p$. This was computed by simply restricting the sum defining $c_p$ to the finite set $z \in [-10,10]$ (which is very accurate). In particular, \cref{thm:large p} shows that $\kappa_p(n) \geq \Omega(\alpha_{p,1}^n)$.}
    \label{fig:alpha1}
\end{figure}

\begin{theorem}\label{thm: new G/A for Zn}
For any $p\ge 1$, and $\vec t := (\frac12,\frac12,\dots,\frac12)^\top\in\R^n$, we have
    \[
    \frac{G(\Z^n,\vec t,n^{1/p}/2)}{A(\Z^n,\vec t,n^{1/p}/2)}\ge \alpha_{p,1}^n \ \text{.} %
    \]
\end{theorem}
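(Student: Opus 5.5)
We compute $G$ exactly and bound $A$ by \cref{thm:ball_point_count}; throughout set $r := n^{1/p}/2$. For $G$, every $\vec x\in\Z^n$ has $|x_i - 1/2|\ge 1/2$ in each coordinate, so $\|\vec x-\vec t\|_p \ge r$, with equality if and only if $x_i\in\{0,1\}$ for every $i$. Hence $G(\Z^n,\vec t,r) = 2^n$.

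For $A$, we control the set $\{\vec y - k\vec t : \vec y\in\Z^n,\ k\in\Z,\ \|\vec y-k\vec t\|_p < r\}$ by splitting on the parity of $k$. If $k$ is odd, then $\vec y - k\vec t \in \Z^n - \vec t$, so every coordinate has absolute value at least $1/2$. Thus $\|\vec y-k\vec t\|_p\ge r$, and these vectors contribute nothing. If $k$ is even, then $\vec y - k\vec t\in\Z^n$, so the contributing set is exactly $\{\vec w\in \Z^n : \|\vec w\|_p < r\}$. It follows that $A = N^o_p(\Z^n, r)\le N_p(\Z^n,n^{1/p}/2) < c_p^n$ by \cref{thm:ball_point_count}. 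Note that the multiplier $k$ does not generate distinct vectors beyond those already in $\Z^n$ or $\Z^n-\vec t$, since $A$ counts the set of differences, not pairs $(\vec y,k)$.

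Combining the two bounds gives $G/A > 2^n/c_p^n = \alpha_{p,1}^n$, as required.

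The only point needing care is the parity split for $A$, in particular checking that the odd-$k$ terms are all excluded because the inequality in the definition of $A$ is strict. Finiteness of $A$ is immediate from discreteness of $\Z^n\cup(\Z^n-\vec t)$. If $c_p\ge 2$, the bound is weaker than the trivial $G/A\ge 2^n/A$ but still holds as stated.
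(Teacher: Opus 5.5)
Your proof is correct and follows the paper's argument: you get $G=2^n$ from $\CVP_p(\vec t,\Z^n)=\{0,1\}^n$, use the parity of $k$ to reduce $A$ to $N_p^o(\Z^n,n^{1/p}/2)$, and bound that by $c_p^n$ via \cref{thm:ball_point_count}. You add explicit detail on why odd-$k$ terms are excluded, which is fine; your closing remark about $c_p\ge 2$ is unnecessary, since $A<c_p^n$ holds for every $p\ge 1$, but it is harmless.
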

\begin{proof}
We have $\CVP_p(\vec{t},\Z^n)=\{0,1\}^n$, with $\dist_p(\vec{t},\Z^n)=n^{1/p}/2$, so $G(\Z^n,\vec t,n^{1/p}/2)=2^n$. \cref{thm:ball_point_count} implies $N^o_p(\Z^n,n^{1/p}/2)< c_p^n$. 
Note that $\Z^n + k\vec{t} = \Z^n + \vec{t}$ for odd integers $k$ and $\Z^n + k\vec{t} = \Z^n$ for even integers $k$.
So,
$$A(\Z^n,\vec t,n^{1/p}/2)=N^o_p(\Z^n,n^{1/p}/2)<c_p^n.$$
Since $\alpha_{p,1}=2/c_p$, this proves $G/A\ge (2/c_p)^n=\alpha_{p,1}^n\ .$
\end{proof}

Applying \cref{thm:sparsification} immediately yields the following result.

\begin{corollary}[Exponential kissing number for $p>p_0$]\label{thm:large p}
  Using the same $\vec t,\alpha_{p,1}$ as~\cref{thm: new G/A for Zn}, when $p>p_0$ where $p_0 \approx 2.1397$ is defined in~\cref{thm:ball_point_count}, we have $\alpha_{p,1}>1$.
  
  In particular, there exists a sublattice $\lat'\subseteq \Z^n$  and $ \vec{t}'\in\R^n$ such that $(\lat',\vec{t}')$ is $(p,n,\Omega(\alpha_{p,1}^n))$-exact locally dense, and $\kappa_p(\lat'+\Z\vec{t}')\ge\Omega(\alpha_{p,1}^n)= 2^{\Omega(n)}$.  
\end{corollary}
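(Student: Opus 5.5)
The plan is to combine \cref{thm:ball_point_count}, \cref{thm: new G/A for Zn}, and \cref{thm:sparsification}. First, $\alpha_{p,1}>1$ for $p>p_0$: by \cref{thm:ball_point_count}, $0<c_p<2$ whenever $p>p_0$, so $\alpha_{p,1}=2/c_p>1$.

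Next, set $\lat=\Z^n$, $\vec t=(\tfrac12,\ldots,\tfrac12)^\top$ and $r=n^{1/p}/2$. The hypotheses of \cref{thm:sparsification} hold. The lattice $\Z^n$ has rank $n$. The target satisfies $\vec t\in\spn(\Z^n)\setminus\Z^n$. The quantity $A$ is finite, since it counts points of the discrete set $\Z^n\cup(\Z^n+\vec t)$ inside a bounded ball. By \cref{thm: new G/A for Zn}, $G/A\ge\alpha_{p,1}^n$. Since $\alpha_{p,1}>1$ is a constant independent of $n$, we have $\alpha_{p,1}^n\ge 64$ for all sufficiently large $n$.

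Then \cref{thm:sparsification} gives a rank-$n$ sublattice $\lat'\subseteq\Z^n$ and a target $\vec t'$ that form a $(p,n,G/(8A))$-exact locally dense lattice. Here $G/(8A)\ge\alpha_{p,1}^n/8=\Omega(\alpha_{p,1}^n)$. Because the $N$ parameter in \cref{def:locally-dense-lat} is only a lower bound on a count, this is in particular a $(p,n,\Omega(\alpha_{p,1}^n))$-exact locally dense lattice. \cref{thm:kannans-embedding-formal} then gives $\kappa_p(\lat'+\Z\vec t')\ge\Omega(\alpha_{p,1}^n)$. Finally, $\alpha_{p,1}>1$ is a constant, so $\alpha_{p,1}^n=2^{n\log_2\alpha_{p,1}}=2^{\Omega(n)}$.

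For the finitely many small $n$ excluded above (where \cref{thm:sparsification} needs $n$ large or $G/A$ may be below $64$), the $\Omega(\cdot)$ absorbs them. We only need some exact locally dense lattice with $N\ge 1$, and \cref{thm:sparsification} already applies from a constant threshold onward. There is no real obstacle: all the substantive work sits in \cref{thm:ball_point_count}, \cref{thm: new G/A for Zn}, and \cref{lem:sparsification-25}. The only care needed is checking the finiteness of $A$ and the threshold $G/A\ge 64$.
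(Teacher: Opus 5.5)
Your proposal is correct and is essentially the paper's own proof. It deduces $\alpha_{p,1}=2/c_p>1$ from $c_p<2$ (\cref{thm:ball_point_count}), applies \cref{thm:sparsification} to $\Z^n$, $\vec t=(1/2,\ldots,1/2)^\top$, $r=n^{1/p}/2$ using $G/A\ge\alpha_{p,1}^n$ from \cref{thm: new G/A for Zn}, and concludes via \cref{thm:kannans-embedding-formal}; your extra checks (finiteness of $A$, the $G/A\ge 64$ threshold for large $n$, and $N$ being only a lower bound) are details the paper leaves implicit, and you handle them correctly.
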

\begin{proof}
\cref{thm:ball_point_count} shows $0<c_p<2$ for our choice of $p$, so $\alpha_{p,1}=2/c_p>1$. We apply~\cref{thm:sparsification} with $\lat=\Z^n,\vec t=(\frac12,\frac12,\dots,\frac12)^\top\in\R^n$, and $r=n^{1/p}/2$. \cref{thm: new G/A for Zn} shows $G/A\ge \alpha_{p,1}^n$, so we get that for some sublattice $\lat'\subseteq\Z^n,\vec{t}'\in\R^n$, $(\lat',\vec{t}')$ is $(p,n,\Omega(\alpha_{p,1}^n))$-exact locally dense, and $\kappa_p(\lat'+\Z\vec{t}')\ge \Omega(\alpha_{p,1}^n)=2^{\Omega(n)}$.
\end{proof}

\subsection{Lattices with \texorpdfstring{$\kappa_p(n)\ge 2^{\Omega(n)}$}{kappa-p(n) >= 2**(Omega(n))} for all \texorpdfstring{$p>2$}{p > 2}}
\label{sec:Dm_rotation_thing}

We now show how to generalize the simple $\Z^n$-based construction that we gave in \cref{sec:Z^n-local-density} to a more general direct sum. To that end, we write $\lat^\ell := \{ (\vec{y}_1^\top,\ldots, \vec{y}_\ell^\top)^\top \ : \ \vec{y}_i \in \lat\}$ and $\vec{t}^\ell := \vec{1}_{\ell} \otimes \vec{t} = (\vec{t}^\top, \ldots, \vec{t}^\top)^{\top}$. Our construction will use such a product lattice $\lat^\ell \subset \R^{m\ell}$ and target $\vec{t}^{\ell} \in \R^{m\ell}$ for some carefully chosen $m$-dimensional lattice $\lat \subset \R^m$ and target $\vec{t} \in \R^m$. We will again choose $\lat$ and $\vec{t}$ so that $2\vec{t} \in \lat$, which in particular implies that $2\vec t^\ell\in\lat^\ell$, hence the only ``annoying'' vectors (i.e., the vectors counted by $A$ in \cref{thm:sparsification}) are simply the short vectors in $\lat^\ell$. (In particular, if $k$ is odd, then $\lat^\ell - k\vec{t}^\ell = \lat^\ell - \vec{t}^\ell$ and if $k$ is even, then $\lat^\ell - k\vec{t}^{\ell} = \lat^\ell$.) 

To bound the number of short vectors, we follow \cite{mazoLatticePointsHighdimensional1990,elkiesPackingDensitiesSuperballs1991,AS18} in using the lattice Theta function 
\[
    \Theta_p(\lat,\tau) := \sum_{\vec{y} \in \lat} e^{-\tau \|\vec{y}\|_p^p}
    \; ,
\]
as in \cref{thm:ball_point_count}. In particular, we note that for any $\tau > 0$, we have 
\begin{equation}
    \label{eq:orthogonal_theta_bound}
    N_p(\lat^\ell, r) < \min_{\tau >0} \, e^{\tau r^p} \Theta_p(\lat^\ell,\tau) = \min_{\tau >0} \, e^{\tau r^p}\Theta_p(\lat,\tau)^\ell
    \; ,
\end{equation}
where the equality uses the fact that 
\[\Theta_p(\lat^\ell,\tau) = \sum_{\vec{y}_1,\ldots, \vec{y}_\ell \in \lat} e^{-\tau (\|\vec{y}_1\|_p^p + \cdots + \|\vec{y}_\ell\|_p^p)} = \Theta_p(\lat,\tau)^\ell
\]
as in \cref{eq:orthogonal-to-power-theta}. In particular, \cref{eq:orthogonal_theta_bound} allows us to obtain bounds on the point-counting function $N_p(\lat^\ell,r)$ of the $(m\ell)$-dimensional lattice $\lat^\ell \subset \R^{m\ell}$, provided that we can bound the Theta function $\Theta_p(\lat,\tau)$ of the $m$-dimensional lattice $\lat \subset \R^m$.   For small $m$ and well-chosen lattices $\lat \subset \R^m$, bounding $\Theta_p(\lat,\tau)$ is doable.

\subsubsection{A construction using the root lattice \texorpdfstring{$D_m$}{Dm}}

Our construction starts with the root lattice
\[D_m:=\{\vec{v}\in\Z^m:\sum_{i=1}^m v_i\equiv 0 \Mod{2} \}
\; ,
\]
and the target $\vec{e}_1 := (1,0,\dots,0)^\top\in\R^m$.
It is easy to see that $D_m$ is a sublattice of $\Z^m$ of index $2$.
Notice also that $D_m$ has $2m$ closest vectors to $\vec{e}_1$ (i.e., the vectors $\vec{e}_1\pm \vec{e}_i$ for $1\le i\le m$), and $\dist_p(\vec{e}_1, D_m) = 1$. Furthermore, $\lambda_1^{(p)}(D_m) = 2^{1/p} > 1$. So, $D_m$ is a promising candidate for our lattice $\lat$ and $\vec{e}_1$ a promising candidate for our target, since there are in fact a relatively large number of vectors at distance $1$ from $\vec{e}_1$ in $D_m$, and only the zero vector has length less than one.

More generally, $D_m^{\ell}$ contains $(2m)^\ell$ closest vectors to $\vec{e}_1^{\ell}$ (i.e., $|\CVP_p(\vec{e}_1^{\ell}, D_m^\ell)| = (2m)^\ell$), with $\dist_p(\vec{e}_1^{\ell}, D_m^\ell) = \ell^{1/p}$. If we could prove this number is larger than $N_p^o(D_m^\ell,\ell^{1/p})$ by an exponential factor, then we can use our techniques to obtain a lattice with exponential kissing number.
However, $N_p^o(D_m^\ell,\ell^{1/p})$ is simply too large for $p = 2+\eps$ and large $\ell$---much larger than the number $(2m)^\ell$ of closest vectors. This arises because of the vectors in $D_m^{\ell}$ with many (but not all) components equal to $\vec0$.

To solve this issue, we apply a simple linear transformation (in fact, a scaled rotation) to the lattice $D_m$ and target $\vec{e}_1$, with the goal of preserving the closest vectors but increasing the $\ell_p$ norms of short vectors. We take the linear transformation to be the Hadamard matrix (defined in \cref{def:Hadamard}). 

Assume $m$ is a power of two, and consider the lattice $\M_m^\ell$ where
\[
    \M_m :=H_mD_m
    := \{ H_m \vec{z} \ : \ \vec{z} \in D_m \} \subset \Z^{m}
    \;, 
\]
and target $\vec t^\ell$ where
\[
     \vec{t}:=H_m\vec{e}_1=(1,\dots,1)^\top\in\Z^m
     \; .
\]
Notice that multiplication by $H_m$ is a scaled isometry for the $\ell_2$ norm (specifically, $\|(H_m/\sqrt{m}) \vec{x}\|_2 = \|\vec{x}\|_2$) so that multiplication by $H_m$ essentially preserves the \emph{Euclidean} geometry of the lattice.%
\footnote{Note that when $m = 1$, $\mathcal{M}_1 = 2\Z$ and $\vec{t} = (1)$, so that the class of lattices $\mathcal{M}_m^\ell$ and targets $\vec{t}^\ell$ strictly generalizes the construction from \cref{sec:Z^n-local-density} using $\Z^n$ and $\vec{t} := (1/2, \ldots, 1/2)^\top$, up to scaling. Rather interestingly, we also have $\mathcal{M}_2 = 2\Z^2$, so that $\mathcal{M}_2^\ell = \mathcal{M}_1^{2\ell} = 2\Z^{2\ell}$. So, both the case $m = 1$ and $m = 2$ yield essentially identical constructions to the construction in \cref{sec:Z^n-local-density}.}
But, the $\ell_p$ norm is \emph{not} preserved by $H_m$.

We will start by proving that multiplying by $H_m$ does in fact preserve the closest vectors in $D_m$ to target $\vec{e}_1$, and in particular, that the number of closest vectors in this construction remains equal to $2m$.

\begin{lemma}[Closest vectors are preserved by Hadamard]\label{lem:closestvectorcount}
For $p>2$, let $\vec t=H_m\vec e_1,\M_m=H_mD_m$, we have $|\CVP_p(\vec{t},\M_m)|=2m$, and $\dist_p(\vec{t},\M_m)=m^{1/p}$.
\end{lemma}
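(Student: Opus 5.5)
Every vector of $\M_m - \vec{t}$ has the form $H_m(\vec{z} - \vec{e}_1)$ with $\vec{z} \in D_m$, so I will pull the whole problem back through $H_m$. Set $\vec{w} := \vec{z} - \vec{e}_1$. As $\vec{z}$ ranges over $D_m$, $\vec{w}$ ranges over the odd coset $\Z^m \setminus D_m$, i.e.\ the integer vectors with odd coordinate sum. It therefore suffices to show two things. First, $\|H_m \vec{w}\|_p \ge m^{1/p}$ for every such $\vec{w}$. Second, equality holds for exactly $2m$ of them, namely $\vec{w} = \pm \vec{e}_i$; these correspond to the vectors $H_m(\vec{e}_1 \pm \vec{e}_i)$, which are distinct lattice points because $H_m$ is invertible.

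For the lower bound I will apply \cref{lem:Hamadard norm}, which gives $\|H_m\vec{w}\|_p \ge m^{1/p}\|\vec{w}\|_2$. A nonzero integer vector has $\|\vec{w}\|_2 \ge 1$, and $\vec{w} \neq \vec 0$ since its coordinate sum is odd. This gives $\dist_p(\vec{t},\M_m) \ge m^{1/p}$. For the matching upper bound, \cref{fact: Hadamard} gives $\|H_m(\pm\vec{e}_i)\|_p = m^{1/p}$, and each $\pm\vec{e}_i$ has odd coordinate sum, so the distance is exactly $m^{1/p}$. This also exhibits $2m$ closest vectors.

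The main step is showing there are no other closest vectors. If $\|\vec{w}\|_2 > 1$, then strictly $\|H_m\vec w\|_p \ge m^{1/p}\|\vec w\|_2 > m^{1/p}$. The only integer vectors with $\|\vec{w}\|_2 = 1$ are $\pm\vec{e}_i$, so this settles the count. Note that $\|\vec w\|_2 > 1$ really does give strict inequality, because the constant $m^{1/p}$ in front is positive.

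The strict inequality for $p>2$ also needs care in the case $\|\vec w\|_2=1$, but there it is not needed, because those $\vec w$ are exactly the claimed minimizers. It is worth recording where $p>2$ enters: \cref{fact: norm conversion} is used in the form $\|\vec{x}\|_p \ge m^{1/p-1/2}\|\vec{x}\|_2$, which requires $p \ge 2$. Hence the whole argument is valid for $p \ge 2$, and in particular for the stated range $p>2$.
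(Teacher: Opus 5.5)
Your proposal is correct and follows essentially the same route as the paper: pull back through $H_m$, apply \cref{lem:Hamadard norm} to get $\|H_m(\vec z-\vec e_1)\|_p \ge m^{1/p}\|\vec z-\vec e_1\|_2$, and use that $\|\vec z-\vec e_1\|_2>1$ unless $\vec z-\vec e_1=\pm\vec e_i$. Your explicit description of $\vec z-\vec e_1$ as the odd-sum coset is just a slightly more self-contained way of saying what the paper takes from the known fact $\CVP(\vec e_1,D_m)=\{\vec e_1\pm\vec e_i\}$.
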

\begin{proof}
Recall $\CVP_p(\vec e_1,D_m)=\{\vec e_1\pm\vec e_i:1\le i\le m\}$. After applying $H_m$, for $\vec{u}\in \CVP_p(\vec e_1,D_m)$, $\|H_m\vec u-\vec t\|_p=\|H_m\vec u-H_m\vec e_1\|_p=m^{1/p}$ by~\cref{fact: Hadamard}, since $\vec u-\vec e_1\in\{\pm\vec e_i:1\le i\le m\}$. Therefore, the vectors in $H_m\CVP_p(\vec e_1,D_m)$ are at distance $m^{1/p}$ to $\vec{t}$, and they are clearly distinct.

It remains to show that these are the closest vectors. For any $\vec{w}\notin H_m\CVP_p(\vec e_1,D_m)$, let $\vec{w}=H_m\vec{z}$ for some $\vec{z}\in D_m$. Then $\vec{z}$ is not a closest vector to $\vec{e}_1$ in $D_m$, hence $\|\vec{z}-\vec{e}_1\|_2> 1$. This implies 
\[
\|\vec w-\vec t\|_p=\|H_m(\vec z-\vec e_1)\|_p\ge m^{1/p}\|\vec z-\vec e_1\|_2>m^{1/p}\ ,
\]
where the first inequality results from~\cref{lem:Hamadard norm}.
This proves $\CVP_p(\vec t,\M_m)=H_m\CVP_p(\vec e_1,D_m)$, so $|\CVP_p(\vec{t},\M_m)|=2m$, and $\dist_p(\vec{t},\M_m)=m^{1/p}$. 
\end{proof}

We have the following construction using $\M_m^\ell,\vec t^\ell$.
\begin{theorem}\label{thm: general G and A}
  Fix any $p>2$, constant $k\ge 0$ and $m=2^k$. Let $\M_m=H_mD_m,\vec t = (1,\dots,1)^\top\in\Z^{m}$. For $\ell>0$, it holds that
$$\frac{G(\M_m^\ell,\vec t^\ell,(m\ell)^{1/p})}{A(\M_m^\ell,\vec t^\ell,(m\ell)^{1/p})}\ge \left(\frac{2m}{\min_{\tau>0} \, e^{\tau m} \cdot \Theta_p(\M_m,\tau)}\right)^\ell \ \text{.}$$  
\end{theorem}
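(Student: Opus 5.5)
We bound $G$ from below and $A$ from above separately, exactly as in the proof of \cref{thm: new G/A for Zn}, with $r := (m\ell)^{1/p}$.

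\textbf{The numerator $G$.} The key observation is that $\|\cdot\|_p^p$ is additive over the blocks of a vector in $\R^{m\ell}$. Write $\vec{x} = (\vec{x}_1,\ldots,\vec{x}_\ell) \in \M_m^\ell$. By \cref{lem:closestvectorcount}, each block satisfies $\|\vec{x}_i - \vec{t}\|_p^p \ge \dist_p(\vec{t},\M_m)^p = m$. Hence
\[
\|\vec{x} - \vec{t}^\ell\|_p^p = \sum_{i=1}^{\ell} \|\vec{x}_i - \vec{t}\|_p^p \ge m\ell,
\]
with equality if and only if every block $\vec{x}_i$ lies in $\CVP_p(\vec{t},\M_m)$. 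This gives $\dist_p(\vec{t}^\ell,\M_m^\ell) = r$ and
\[
G = |\CVP_p(\vec{t},\M_m)|^\ell = (2m)^\ell .
\]
This uses $p > 2$, which is the hypothesis of \cref{lem:closestvectorcount}.

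\textbf{Reducing $A$ to a point count.} First, $2\vec{t} = H_m(2\vec{e}_1)$ and $2\vec{e}_1 \in D_m$, so $2\vec{t} \in \M_m$ and therefore $2\vec{t}^\ell \in \M_m^\ell$. Consequently the set $\M_m^\ell - k\vec{t}^\ell$ depends only on the parity of $k$:
\begin{itemize}
\item for even $k$ it equals $\M_m^\ell$;
\item for odd $k$ it equals $\M_m^\ell - \vec{t}^\ell$.
\end{itemize}
The odd cosets contain no vector of norm strictly less than $r$, since $r = \dist_p(\vec{t}^\ell,\M_m^\ell)$. So the set counted by $A$ is exactly $\{\vec{y} \in \M_m^\ell : \|\vec{y}\|_p < r\}$, and
\[
A = N_p^o(\M_m^\ell, r) \le N_p(\M_m^\ell, r).
\]

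\textbf{Bounding the point count.} Apply \cref{eq:orthogonal_theta_bound} (that is, \cref{thm:ball_point_count} combined with the product formula for the Theta function) with $r^p = m\ell$:
\[
A < \min_{\tau>0} e^{\tau m\ell}\, \Theta_p(\M_m,\tau)^\ell = \Bigl(\min_{\tau>0} e^{\tau m}\, \Theta_p(\M_m,\tau)\Bigr)^\ell .
\]
The minimum can be taken inside the $\ell$-th power because $x \mapsto x^\ell$ is increasing on positive reals. Dividing $G = (2m)^\ell$ by this bound on $A$ gives the claimed inequality.

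\textbf{Remaining checks.} No step looks like a serious obstacle; the argument is bookkeeping. Two small points need confirming:
\begin{enumerate}
\item $\Theta_p(\M_m,\tau)$ is finite, so the bound is meaningful. This holds because $\M_m$ is a full-rank lattice in $\R^m$.
\item The edge case $m = 1$ (that is, $k = 0$) is covered. Here $H_1 = [1]$ and $D_1 = 2\Z$, so $\M_1 = 2\Z$ and $\vec{t} = (1)$. The closest-vector set is $\{0, 2\}$, of size $2 = 2m$, so \cref{lem:closestvectorcount} still holds as stated.
\end{enumerate}
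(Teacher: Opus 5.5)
Your proof is correct and follows essentially the same route as the paper. You get $G=(2m)^\ell$ from the blockwise closest-vector structure given by \cref{lem:closestvectorcount}, and $A=N_p^o(\M_m^\ell,(m\ell)^{1/p})$ because $2\vec t^\ell\in\M_m^\ell$ keeps the odd cosets from contributing; you then apply the Theta bound \cref{eq:orthogonal_theta_bound}, and your extra checks (additivity of $\|\cdot\|_p^p$, the case $m=1$) only fill in details the paper leaves implicit.
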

\begin{proof}
    \cref{lem:closestvectorcount} shows $\dist_p(\vec t,\M_m)=m^{1/p}$ and $|\CVP_p(\vec t,\M_m)|=2m$, therefore $\dist_p(\vec t^\ell,\M_m^\ell)=(m\ell)^{1/p}$ and $|\CVP_p(\vec t^\ell,\M_m^\ell)|=(2m)^{\ell}$ (since $\CVP_p(\vec t^\ell,\M_m^\ell)=\CVP_p^{\oplus \ell}(\vec t,\M_m)$). So $$G(\M_m^\ell,\vec t^\ell,(m\ell)^{1/p})=(2m)^{\ell}.$$ 
    Since $\vec{t}^\ell\notin\M_m^\ell$ and $2\vec{t}^\ell\in\M_m^\ell$, $\{ \vec{y} - k\vec{t}^\ell\ : k \in \Z,\ \vec{y} \in \M_m^\ell\}$ is the union of $\{ \vec{y} - k\vec{t}^\ell\ : k \in 2\Z,\ \vec{y} \in \M_m^\ell\} = \M_m^\ell $ and $\{\vec{y} - k\vec{t}^\ell \ : \ k \in 2\Z +1, \vec{y} \in \M_m^\ell \} = \M_m^\ell + \vec{t}^\ell$. Note the second set contributes no vector shorter than $(m\ell)^{1/p}$. Using~\cref{eq:orthogonal_theta_bound}, we get $1\le A(\M_m^\ell,\vec t^\ell,(m\ell)^{1/p})\le \min_{\tau>0} \, e^{\tau m\ell} \cdot \Theta_p(\M_m,\tau)^{\ell}$. Hence the result follows.
\end{proof}

Combining this with \cref{thm:sparsification} gives the following immediate corollary, which shows a relatively simple function that lower bounds the kissing number.

\begin{corollary}\label{cor: alpha to kissing number}
    Fix any $p>2$, constant $k\ge 0$, and $m=2^k$. Define
    \[
    \alpha_{p,m} :=\left(\frac{2m}{\min_{\tau>0} \, e^{\tau m} \cdot \Theta_p(\M_m,\tau)}\right)^{1/m} \ \text{.}
    \]
    For $\ell>0$ and $n:=m\ell$, there exists a full-rank sublattice $\M'\subseteq \M_m^\ell$ and target $\vec t'$, such that $(\M',\vec t')$ is $(p,n,\Omega(\alpha_{p,m}^{n}))$-exact locally dense. In particular, $\kappa_p(\M'+\Z\vec t')\ge \Omega(\alpha_{p,m}^{n})$.

    Therefore, for all $n$, $\kappa_p(n) \geq \Omega(\alpha_{p,m}^{n})$.
\end{corollary}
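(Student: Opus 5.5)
The plan is to combine \cref{thm: general G and A} with \cref{thm:sparsification}, applied to the lattice $\lat := \M_m^\ell \subset \R^{m\ell}$, the target $\vec t^\ell$, and radius $r := (m\ell)^{1/p} = n^{1/p}$. First we check the hypotheses of \cref{thm:sparsification}.
\begin{itemize}
\item $\M_m^\ell$ has rank $n = m\ell$, since $H_m$ is invertible and $D_m$ has full rank.
\item $\vec t^\ell \in \spn(\M_m^\ell) = \R^n$.
\item $\vec t^\ell \notin \M_m^\ell$, because $\vec t = H_m \vec e_1$ and $\vec e_1 \notin D_m$ (its coordinate sum is odd).
\item $A$ is finite: as in the proof of \cref{thm: general G and A}, $2\vec t^\ell \in \M_m^\ell$, so the set counted by $A$ lies in the union of the two lattice cosets $\M_m^\ell$ and $\M_m^\ell + \vec t^\ell$, which is discrete.
\end{itemize}

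By \cref{thm: general G and A} and the definition of $\alpha_{p,m}$, we get
\[
G/A \ \ge\ \bigl(\alpha_{p,m}^m\bigr)^\ell \ =\ \alpha_{p,m}^{n}.
\]
If $\alpha_{p,m} > 1$ and $n$ is sufficiently large, then $G/A \ge 64$. \cref{thm:sparsification} then gives a rank-$n$ (hence full-rank) sublattice $\M' \subseteq \M_m^\ell$ and a target $\vec t'$ forming a $(p,n,G/(8A))$-exact locally dense lattice. Since $G/(8A) \ge \alpha_{p,m}^n/8 = \Omega(\alpha_{p,m}^n)$, we get the first claim. The kissing number bound $\kappa_p(\M' + \Z\vec t') \ge \Omega(\alpha_{p,m}^n)$ then follows from \cref{thm:kannans-embedding-formal}.

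The case $\alpha_{p,m} \le 1$ is trivial, since the statement is only asymptotic. The constant $\Omega(\alpha_{p,m}^n)$ is at most a constant, and any lattice of rank $n \ge 1$ has kissing number at least $2$ (for example $\Z^n$, which has $\kappa_p(\Z^n) = 2n$). The same trivial bound covers small $n$, where $G/A \ge 64$ may fail, by adjusting the implied constant.

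For the final statement ``for all $n$'', write $n = m\ell + j$ with $0 \le j < m$. Take the rank-$m\ell$ lattice above and pad it as in \cref{rmk:padding}, adding $j$ mutually orthogonal long vectors orthogonal to its span, so that the kissing number is preserved. This gives
\[
\kappa_p(n) \ \ge\ \Omega\bigl(\alpha_{p,m}^{m\ell}\bigr) \ \ge\ \Omega\bigl(\alpha_{p,m}^{n-m}\bigr) \ =\ \Omega\bigl(\alpha_{p,m}^{n}\bigr),
\]
because $m$ is a constant. (If $\alpha_{p,m} < 1$, the bound holds trivially as before.)

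The main obstacle is essentially bookkeeping: confirming that the hypotheses of \cref{thm:sparsification} hold and that the padding preserves the shortest vectors. The padded vectors must be long enough, for example of $\ell_p$ norm exceeding $\lambda_1^{(p)}$ of the lattice they are added to, orthogonal to its span, and mutually orthogonal. Then every lattice vector with a nonzero padded component has norm at least its padded part, which is larger than $\lambda_1^{(p)}$, since the $\ell_p$ norm on a direct sum with disjoint coordinate supports dominates each part. The substantive content was already handled in \cref{thm: general G and A} and \cref{thm:sparsification}.
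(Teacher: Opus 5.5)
Your proposal is correct and follows the paper's proof. Like the paper, you get $G/A \ge \alpha_{p,m}^{n}$ from \cref{thm: general G and A}, apply \cref{thm:sparsification} once $G/A \ge 64$ (for $\alpha_{p,m} > 1$ and large $\ell$), conclude with \cref{thm:kannans-embedding-formal}, and pad to reach all $n$. Beyond the paper, you explicitly verify the hypotheses ($\vec t^\ell \notin \mathcal{M}_m^\ell$, $2\vec t^\ell \in \mathcal{M}_m^\ell$, $A < \infty$) and note that the padding must use disjoint coordinate supports for the $\ell_p$ norm; the paper leaves these details implicit.
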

\begin{proof}
The statement is vacuous when $\alpha_{p,m} \leq 1$, so we may assume $\alpha_{p,m} > 1$.

For $\M_m^\ell,\vec t^\ell$ and radius $(m\ell)^{1/p}$, we have $G/A\ge \alpha_{p,m}^{m\ell}=\alpha^n_{p,m}$ by~\cref{thm: general G and A}. When $\alpha_{p,m}>1$, for sufficiently large $\ell$, this gives $G/A\ge 64$. So, we can apply \cref{thm:sparsification}, which implies the existence of the sublattice $\M'\subseteq \M_m^\ell$ and $\vec t'$ such that $(\M',\vec t')$ is $(p,n,\Omega(\alpha_{p,m}^{n}))$-exact locally dense, and that $\kappa_p(\M'+\Z\vec t')\ge \Omega(\alpha_{p,m}^{n})$.

This implies that $\kappa_p(n) \geq \Omega(\alpha_{p,m}^n)$ for $n$ divisible by $m$. A simple padding argument similar to~\cref{rmk:padding} then shows the result for all $n$.
\end{proof}

\subsubsection{Finishing the proof}\label{sec:finishing}

By \cref{cor: alpha to kissing number}, the kissing number  $\kappa_{p}(n)$ is at least $\Omega(\alpha_{p,m}^n)$ for any power of two $m = 2^k$. Therefore, for any $p$, 
\[
\kappa_p(n) \geq \Omega(\alpha_p^n)
\]
where
\[
    \alpha_p := \max_{k \geq 0} \alpha_{p,2^k}
    \; .
\]
(It is not hard to see that the maximum is achieved. And, the value of $k$ achieving the maximum increases as $p$ approaches $2$ from above.)
But, this is of course not interesting if $\alpha_p \leq 1$. So, it remains to show that $\alpha_p > 1$ so that $\kappa_p(n) \geq 2^{\Omega(n)}$. More generally, we wish to prove the best lower bound on $\alpha_p$ that we can, since better lower bounds imply better kissing numbers.

In fact, for any fixed $p$ and $m$, computing $\alpha_{p,m}$ numerically is not difficult---even for values of $m$ that are quite large. We therefore do exactly that in \cref{fig:high_p_plot}. As we explain in the caption of \cref{fig:high_p_plot}, the computation works by truncating the theta series defining $\alpha_{p,m}$ after a relatively small number of shells and (rigorously) bounding the (small) contribution of later shells. We stopped performing these computations at $m = 2^{17}$, where we found that, e.g., $\alpha_{2.05,m} > 1.000000239$ (and a separate computation shows that $\alpha_{2.05,m} < 1.000000246$).

\begin{figure}[!ht]
    \def\arraystretch{3}
    \centering
    \begin{tabular}{@{}l@{}}
    \includegraphics[width=0.9\textwidth]{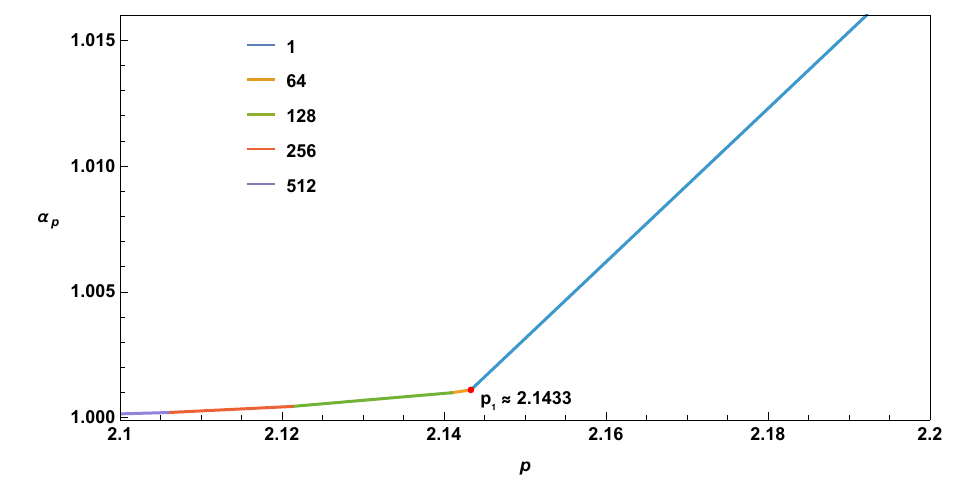} \\
    \includegraphics[width=0.9\textwidth]{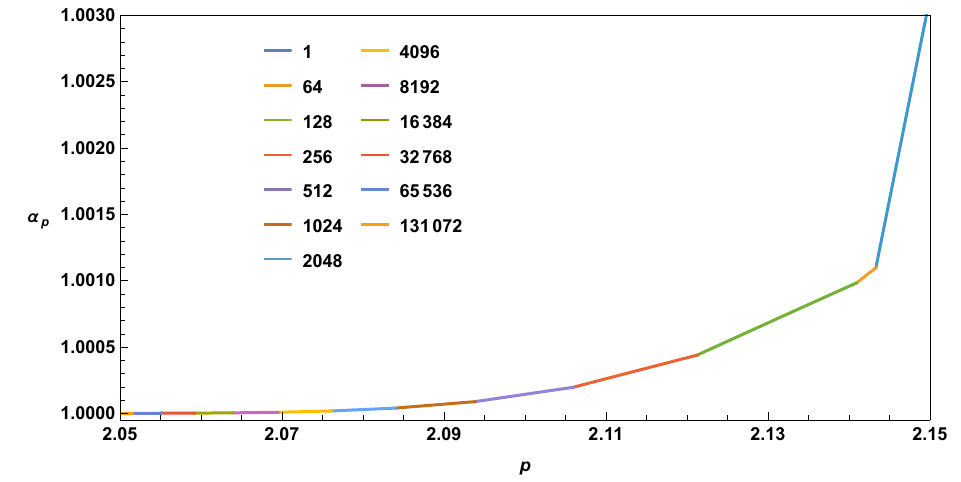}
    \end{tabular}
    \caption[Plot of $\alpha_p$]{Numerical estimates of $\alpha_{p}$ for different values of $p$. The top plot shows the maximum value of $\alpha_p$ over all powers of two for $2.1 \leq p \leq 2.2$. The colors represent the value of $m$ that achieved the optimal value $\alpha_p = \max_m \alpha_{p,m}$.
    The bottom plot shows the values $2.05 \leq p \leq 2.15$, where much larger values of $m$ become relevant. 
    
    \protect \par Formally, these values were computed by computing the exact contribution to the sum $\Theta_p(\mathcal{M}_{m},\tau)$ of vectors $\vec{y} \in \mathcal{M}_m$ with $\|\vec{y}\|_p^p/m$ equal to $0$, $2^{p-1}$, $2^{p}$, $2^{p-1} + 2^{2p-3}$, $2^{2p-2}$, and $2^{p-1} + 2^{2p-2}$ (i.e., the first five shells together with zero) and using \cref{lem:Hamadard norm} together with an exact expression for $\Theta_2(\mathcal{M}_{m},\tau)$ in terms of Jacobi theta functions to bound the contribution of lower-order terms. (For $m = 1$, we used a simpler truncated sum up to the tenth shell, with the same rigorous bound on the error.) So, this numerical estimate is by design a (very tight) lower bound on $\alpha_{p,m}$.}
    \label{fig:high_p_plot}
\end{figure}

It remains to rigorously show that $\alpha_p := \max \alpha_{p,m}$ is always strictly greater than $1$ for all $p > 2$. More generally, we prove a relatively simple analytic bound on $\alpha_{p,m}$. This bound starts with the following lemma.\footnote{We note that the theta function $\Theta_2(D_m,\sigma)$ satisfies the well known (and easy to prove) identity
\[
    \Theta_2(D_m,\sigma) = \frac{\theta_3(\sigma)^m + \theta_4(\sigma)^m}{2}
    \; ,
\]
where $\theta_3(\sigma) := \sum_{z \in \Z} e^{-\sigma z^2}$ and $\theta_4(\sigma) := \sum_{z \in \Z}(-1)^z e^{-\sigma z^2}$ are the Jacobi theta functions. The Jacobi theta functions are well understood.}

\begin{lemma}\label{lem: directsumshortbound}
For any $p > 2$ and $\tau>0$,
\[
\Theta_p(\M_m,\tau)\le\Theta_2(\M_m,\tau/\mu)=\Theta_2(D_m,\tau m/\mu)
\; ,
\]
where $\mu := 2^{1 - p/2} < 1$.
\end{lemma}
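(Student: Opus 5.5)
The plan is to compare the two theta series term by term. Since
\[
\Theta_p(\M_m,\tau)=\sum_{\vec y\in\M_m}e^{-\tau\|\vec y\|_p^p},\qquad \Theta_2(\M_m,\tau/\mu)=\sum_{\vec y\in\M_m}e^{-(\tau/\mu)\|\vec y\|_2^2},
\]
it suffices to prove the pointwise inequality $\|\vec y\|_p^p\ge \|\vec y\|_2^2/\mu=2^{p/2-1}\|\vec y\|_2^2$ for every $\vec y\in\M_m$. The final equality $\Theta_2(\M_m,\tau/\mu)=\Theta_2(D_m,\tau m/\mu)$ then follows from \cref{fact: Hadamard}. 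Indeed, $H_m^\top H_m=m\bm I_m$ gives $\|H_m\vec z\|_2^2=m\|\vec z\|_2^2$, and $\vec z\mapsto H_m\vec z$ is a bijection from $D_m$ onto $\M_m$.

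The key structural step is to show that every coordinate of $\vec y=H_m\vec z$, with $\vec z\in D_m$, is an even integer. Each coordinate is $y_i=\sum_j \pm z_j$, since $H_m\in\{\pm1\}^{m\times m}$. Because $-1\equiv 1\pmod 2$, this gives $y_i\equiv \sum_j z_j\equiv 0\pmod 2$ by the definition of $D_m$. Hence each $|y_i|$ is either $0$ or at least $2$.

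Next I apply a coordinatewise bound. If $|y_i|\ge 2$ and $p>2$, then
\[
|y_i|^p=|y_i|^2|y_i|^{p-2}\ge 2^{p-2}|y_i|^2,
\]
and this holds trivially when $y_i=0$. Summing over $i$ gives $\|\vec y\|_p^p\ge 2^{p-2}\|\vec y\|_2^2$. Since $p>2$ implies $p-2\ge p/2-1\ge 0$, we get $2^{p-2}\ge 2^{p/2-1}=1/\mu$, which is the needed inequality $\|\vec y\|_p^p\ge\|\vec y\|_2^2/\mu$. It also confirms $\mu<1$.

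Exponentiating, $e^{-\tau\|\vec y\|_p^p}\le e^{-(\tau/\mu)\|\vec y\|_2^2}$ for every $\vec y\in\M_m$. Summing over $\M_m$ yields the lemma. Both series converge, so the termwise comparison is legitimate.

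There is no real obstacle here; the only idea is the parity observation that $\M_m\subseteq 2\Z^m$. The stated constant $\mu$ is weaker than what this argument gives.
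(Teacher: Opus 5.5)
Your proof is correct, but it gets the pointwise inequality by a different mechanism than the paper.

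The paper argues through Euclidean geometry. Since $\lambda_1^{(2)}(D_m)=\sqrt2$ and $H_m/\sqrt m$ is orthogonal, every nonzero $\vec y\in\mathcal{M}_m$ has $\|\vec y\|_2\ge\sqrt{2m}$. The power-mean bound of \cref{fact: norm conversion} then gives $\|\vec y\|_p^p\ge m^{1-p/2}\|\vec y\|_2^{p-2}\|\vec y\|_2^2\ge m^{1-p/2}(2m)^{(p-2)/2}\|\vec y\|_2^2=2^{p/2-1}\|\vec y\|_2^2$, and the powers of $m$ cancel exactly.

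You instead use the arithmetic fact $\mathcal{M}_m\subseteq 2\Z^m$, which needs only the $\pm1$ entries of $H_m$ and the parity condition defining $D_m$, and then compare coordinate by coordinate. This is more elementary. It also gives the constant $2^{p-2}$, which beats $2^{p/2-1}$ for every $p>2$. This constant is optimal for a bound of the form $\|\vec y\|_p^p\ge c\|\vec y\|_2^2$ on $\mathcal{M}_m$: equality holds for any nonzero $\vec y\in\mathcal{M}_m$ with coordinates in $\{0,\pm2\}$, such as $H_m(\vec e_1+\vec e_2)=(2,0,2,0,\ldots)^\top$ for $m\ge2$.

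So the lemma actually holds with $\mu$ replaced by $2^{2-p}$. The proof of \cref{lem:chosen tau bound} uses this lemma only for some $0<\mu<1$, so your constant would sharpen that bound to $\alpha_{p,m}^m\ge 2m^{1-2^{2-p}}/9$. This roughly doubles $1-\mu$ for $p$ near $2$, and the improvement carries through to \cref{thm:last-one}.

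In exchange, the paper's route is general in a different direction. It uses neither integrality nor the sign pattern of $H_m$. It therefore applies to any lattice in $\R^m$ whose nonzero vectors have Euclidean norm at least $\sqrt{2m}$, for instance $\sqrt m\,R D_m$ for an arbitrary orthogonal $R$. It also reuses the same norm-conversion step already used in \cref{lem:Hamadard norm}.
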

\begin{proof}
For all $\vec{v}\in D_m$, if $\vec{v}\ne \bm{0}$, then $\|\vec{v}\|_2\ge \sqrt{2}$, so $\|H_m\vec v\|_2\ge \sqrt {2m}$.
Therefore, \cref{fact: norm conversion} gives
\[
\|H_m\vec{v}\|_p^p\ge (m^{1/p-1/2}\|H_m\vec{v}\|_2)^p=m^{1-p/2} \|H_m \vec{v}\|_2^{p-2} \|H_m\vec{v}\|_2^2\ge m^{1-p/2}(\sqrt{2m})^{p-2}\|H_m\vec{v}\|_2^2=\|H_m\vec{v}\|_2^2/\mu.
\]
From this, it follows that
\[
\Theta_p(\M_m,\tau)=\sum_{\vec{v}\in D_m}e^{-\tau \|H_m\vec{v}\|_p^p}\le \sum_{\vec{v}\in D_m}e^{-\tau \|H_m\vec{v}\|_2^2/\mu}=\Theta_2(\M_m,\tau /\mu).
\]
Finally, the equality follows from the fact that $H_m$ is a scaled rotation. In particular,
\[
\Theta_2(\M_m,\tau/\mu)=\sum_{\vec{v}\in D_m}e^{-\tau \|H_m\vec{v}\|_2^2/\mu}=\sum_{\vec{v}\in D_m}e^{-\tau m \|\vec{v}\|_2^2/\mu}=\Theta_2(D_m,\tau m/\mu). \qedhere
\]
\end{proof}

\begin{lemma}\label{lem:chosen tau bound}
   For any $p > 2$ and $m \geq 2$ a power of two, we have
   \[
   \alpha_{p,m}^m :=\frac{2m}{\min_{\tau>0} \, e^{\tau m}\Theta_p(\M_m,\tau)} \ge \frac{2m^{1-\mu}}{9} \ \text{,}
   \]
   where $\mu := 2^{1 - p/2} < 1$.
\end{lemma}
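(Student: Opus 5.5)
The plan is to plug a single explicit value of $\tau$ into the minimum, after converting to the Euclidean theta function of $D_m$. By \cref{lem: directsumshortbound}, for every $\tau>0$,
\[
e^{\tau m}\Theta_p(\M_m,\tau)\le e^{\tau m}\,\Theta_2(D_m,\tau m/\mu)\; .
\]
Substitute $\sigma:=\tau m/\mu$, so that $\tau m=\mu\sigma$. Then it suffices to exhibit one $\sigma>0$ with
\[
e^{\mu\sigma}\,\Theta_2(D_m,\sigma)\le 9m^{\mu}\; ,
\]
since this gives $\alpha_{p,m}^m\ge 2m/(9m^\mu)=2m^{1-\mu}/9$. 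I will take $\sigma:=\ln m$, which makes $e^{\mu\sigma}=m^\mu$ exactly. The claim then reduces to the $p$-independent bound $\Theta_2(D_m,\ln m)\le 9$ for all powers of two $m\ge 2$.

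To bound $\Theta_2(D_m,\ln m)$, I will use the Jacobi identity recorded in the footnote,
\[
\Theta_2(D_m,\sigma)=\frac{\theta_3(\sigma)^m+\theta_4(\sigma)^m}{2}\; .
\]
Write $x:=e^{-\sigma}=1/m$. The second term is easy: $\theta_4(\sigma)=1-2x+2x^4-\cdots$ is an alternating series with decreasing terms, so $0<\theta_4(\sigma)\le 1$ and $\theta_4(\sigma)^m\le 1$. For the first term,
\[
\theta_3(\sigma)=1+2x+2\sum_{z\ge 2}x^{z^2}\le 1+\frac{2}{m}+\frac{3}{m^4}\; ,
\]
using $m\ge 2$ to bound the geometric-type tail $\sum_{z\ge2}m^{-z^2}$. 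Then $1+y\le e^y$ gives
\[
\theta_3(\sigma)^m\le \exp\!\left(2+\frac{3}{m^3}\right)\le e^{2+3/8}<11\; .
\]
Hence $\Theta_2(D_m,\ln m)\le (11+1)/2=6\le 9$, which completes the argument.

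The only delicate point is the constant. Bounding $\Theta_2(D_m,\sigma)$ crudely by $\theta_3(\sigma)^m$, using that $D_m\subseteq\Z^m$, gives roughly $e^{2.375}\approx 10.75$ when $m=2$, which exceeds $9$. So I will use the identity above, where $\theta_4^m\le 1$ roughly halves the bound. For large $m$ the crude bound $\approx e^2<9$ would already suffice, so if needed one could instead check $m=2$ by hand and use the crude bound for $m\ge 4$. Everything else, namely the substitution and the tail estimate for $\theta_3$, is routine.
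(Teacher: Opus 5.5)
Your proof is correct and is essentially the paper's argument: you make the same reduction through the comparison $\Theta_p(\cdot,\tau)\le\Theta_2(D_m,\tau m/\mu)$ and the same choice $\tau=\mu\ln(m)/m$ (i.e.\ $\sigma=\ln m$), and you differ only in how you bound $\Theta_2(D_m,\ln m)$ by a constant. The Jacobi identity is not needed, though. The paper simply uses $D_m\subseteq\Z^m$ and $\sum_{z\in\Z}m^{-z^2}\le 1+2/(m-1)$, whose $m$-th power is decreasing and equals exactly $9$ at $m=2$; your worry that the crude $\Z^m$ bound exceeds $9$ at $m=2$ comes only from the lossy step $1+y\le e^y$, since in fact $\theta_3(\ln 2)^2\approx 4.5$.
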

\begin{proof}
\cref{lem: directsumshortbound} gives $\Theta_p(\M_m,\tau)\le \Theta_2(D_m,\tau m/\mu)$, and we trivially have  that $\Theta_2(D_m,\tau m/\mu)\le \Theta_2(\Z^m,\tau m/\mu)$.
Let $\tau :=\mu\ln(m)/m$. Then,
\[
\Theta_p(\M_m,\tau)\le \sum_{\vec{v}\in\Z^m}e^{-\tau m\|\vec{v}\|_2^2/\mu}\le \sum_{\vec{v}\in\Z^m}m^{-\|\vec{v}\|_2^2}=\left(\sum_{z\in\Z}m^{-z^2}\right)^m \ \text{,}
\]
and
\[
\sum_{z\in\Z}m^{-z^2}=1+2\sum_{z\in\Z_{>0}}m^{-z^2}\le 1+2\sum_{z\in\Z_{>0}}m^{-z}=1+\frac{2m^{-1}}{1-m^{-1}}=1+\frac{2}{m-1} \ \text{.}
\]

Differentiation shows that $\left(1+\frac{2}{m-1}\right)^m$ is strictly decreasing for $m\ge 2$, and so
\[
\Theta_p(\M_m,\tau)\le \left(1+\frac{2}{m-1}\right)^m \leq \left(1+\frac{2}{2-1}\right)^2 = 9 \ \text{.}
\]
We have that $e^{\tau m}=m^{\mu}$, and so this gives
\[
\alpha_{p,m}^m\ge\frac{2m}{9\cdot m^\mu}=\frac{2m^{1-\mu}}{9} \ \text{,}
\]
as needed.
\end{proof}

Finally, plugging in a concrete value of $m$ as a function of $p$ gives the following result.

\begin{corollary}[Exponential kissing number for $p>2$]\label{thm:last-one}
For any constant $p > 2$, 
\[
    \kappa_p(n) \geq \Omega(\alpha_p^n)
    \geq 2^{\Omega(n)}
    \; .
\]
\end{corollary}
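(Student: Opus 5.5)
The plan is to combine \cref{cor: alpha to kissing number} with the analytic bound of \cref{lem:chosen tau bound}, choosing $m$ as a power of two that depends only on $p$. Since $\alpha_p := \max_{k\ge 0}\alpha_{p,2^k} \ge \alpha_{p,m}$ for every power of two $m$, \cref{cor: alpha to kissing number} applied with the maximizing $m$ gives $\kappa_p(n) \ge \Omega(\alpha_p^n)$ directly. (If one prefers not to rely on the claim that the maximum is attained, it suffices to work with any fixed $m$ for which $\alpha_{p,m}>1$.) It then remains to exhibit one power of two $m = m(p)$ with $\alpha_{p,m} > 1$. Once we have this, $\alpha_p \ge \alpha_{p,m} > 1$ is a constant depending only on $p$, and therefore $\Omega(\alpha_p^n) = 2^{\Omega(n)}$.

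To find such an $m$, fix $p>2$ and set $\mu = 2^{1-p/2}$. Then $0<\mu<1$, so the quantity $1-\mu$ is a positive constant depending only on $p$. By \cref{lem:chosen tau bound}, for every power of two $m\ge 2$ we have
\[
\alpha_{p,m}^m \ge \frac{2m^{1-\mu}}{9}.
\]
The right-hand side exceeds $1$ as soon as $m^{1-\mu} > 9/2$, that is, as soon as $m > (9/2)^{1/(1-\mu)}$. So I would take $m$ to be the smallest power of two strictly larger than $(9/2)^{1/(1-\mu)}$; this choice is a constant determined by $p$ alone. For this $m$ we get $\alpha_{p,m}^m > 1$, and hence $\alpha_{p,m} > 1$.

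Finally, I would check that the hypotheses of \cref{cor: alpha to kissing number} hold: $p>2$, and $m=2^k$ with $k$ a constant (it depends only on $p$). The corollary then gives $\kappa_p(n) \ge \Omega(\alpha_{p,m}^n)$ for all $n$, with the padding argument already built into it handling $n$ not divisible by $m$. There is no real obstacle beyond this bookkeeping. The only subtle point is that the hidden constant in $\Omega(\cdot)$ and the threshold for sufficiently large $n$ may depend on $p$ through $m$, which is acceptable because $p$ is fixed. Optionally, one can note that as $p\to 2^+$ we have $\mu\to 1$, so the required $m$ blows up; this matches the numerical behavior shown in \cref{fig:high_p_plot}.
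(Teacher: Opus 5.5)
Your proposal is correct and follows essentially the same route as the paper: apply \cref{lem:chosen tau bound} with a power of two $m$ depending only on $\mu = 2^{1-p/2}$ to get $\alpha_{p,m} > 1$, then invoke \cref{cor: alpha to kissing number}. The paper additionally takes $m$ to be the first power of two beyond $\widehat m = e(9/2)^{1/(1-\mu)}$, where $f_p(m) = (2m^{1-\mu}/9)^{1/m}$ is decreasing, which yields the explicit bound $\alpha_p \ge (2e)^{(1-\mu)/(2\widehat m)}$; your smaller threshold $m > (9/2)^{1/(1-\mu)}$ gives only the qualitative $\alpha_{p,m} > 1$, which suffices for the statement.
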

\begin{proof}
Let $\mu := 2^{1-p/2} < 1$.
Let $\widehat{m} := e (9/2)^{1/(1-\mu)}$. A quick check shows that the function $f_p(m) := (2m^{1-\mu}/9)^{1/m}$ arising in \cref{lem:chosen tau bound} is decreasing for $m > \widehat{m}$. So, let $m$ be the first power of two larger than $\widehat{m}$, so that in particular $f_p(m) \geq f_p(2\widehat{m})$. It follows from \cref{lem:chosen tau bound} that
\[
    \alpha_p \geq \alpha_{p,m} \geq f_p(m) \geq f_p(2\widehat{m}) = (2e)^{(1-\mu)/(2 \widehat{m})} > 1
    \; ,
\]
and the result follows from \cref{cor: alpha to kissing number}.
\end{proof}

\newcommand{\etalchar}[1]{$^{#1}$}

\appendix
\crefalias{section}{appendix}
\crefname{appendix}{Appendix}{Appendices}

\section{Proof of sparsification results}\label{app:A}

We will begin by proving some technical lemmas used in the proof of~\cref{lem:sparsification-25}. 
\begin{lemma}\label{lem:order-of-t}
    Let $p\ge 1$, $\lat$ be a lattice of rank $n$ with basis $\basis$, $\vec{t}\in\spn(\lat)$, and $r > 0$. Let 
    \[A:=|\{\vec{y}-k\vec{t}:\vec{y}\in\lat,k\in\Z,\|\vec{y}-k\vec{t}\|_p<r\}|
    \; .
    \]
    If $A$ is finite, then there exists an integer $z>0$, such that $z\vec{t}\in\lat$. Furthermore, $\lat+\Z\vec t$ is a lattice.
\end{lemma}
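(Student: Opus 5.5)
We argue by pigeonhole on cosets modulo $\lat$, and then check discreteness of $\lat+\Z\vec t$ directly. Note first that $A\ge 1$, since $\vec 0=\vec 0-0\cdot\vec t$ lies in the set (as $r>0$).

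\textbf{Existence of $z$.} Consider the images of the multiples $k\vec t$, $k\in\Z$, in the torus $\spn(\lat)/\lat$, which is compact because $\lat$ has full rank in $\spn(\lat)$. Fix a bounded fundamental domain for $\lat$ inside $\spn(\lat)$ of $\ell_p$-diameter less than $r$. Such a domain exists: shrink a basis parallelepiped by a large integer factor $M$, so $\lat/M$ has a fundamental parallelepiped of small diameter, and cover a fundamental domain of $\lat$ by finitely many translates of it.

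More simply, for each $k$ choose $\vec y_k\in\lat$ with $\vec y_k-k\vec t$ in a fixed basis parallelepiped $P$. Partition $P$ into finitely many cells of $\ell_p$-diameter $<r$. Suppose no positive multiple of $\vec t$ lies in $\lat$. Then the points $\vec y_k-k\vec t$ for $k=0,1,2,\dots$ are pairwise distinct: if $\vec y_k-k\vec t=\vec y_j-j\vec t$ with $k>j$, then $(k-j)\vec t\in\lat$. Infinitely many of these points fall in a single cell. Fix one such index $k_0$. For every other such $k$, the difference $(\vec y_k-\vec y_{k_0})-(k-k_0)\vec t$ has norm $<r$, is of the form counted by $A$, and these differences are pairwise distinct. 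This gives $A=\infty$, a contradiction.

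\textbf{$\lat+\Z\vec t$ is a lattice.} Let $s\ge1$ be minimal with $s\vec t\in\lat$. Then
\[
\lat+\Z\vec t=\bigcup_{j=0}^{s-1}(\lat+j\vec t),
\]
a finite union of translates of the discrete set $\lat$. Any bounded region meets each translate in finitely many points, so the union is discrete. It is also an additive subgroup of $\spn(\lat)$. A discrete additive subgroup of $\R^m$ is a lattice, which finishes the proof.

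The only mildly delicate step is the pigeonhole argument. There one has to ensure the produced short vectors are genuinely distinct, and this is exactly where the assumption that no positive multiple of $\vec t$ lies in $\lat$ is used.
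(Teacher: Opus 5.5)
Your proof is correct and is essentially the paper's argument: you reduce the multiples of $\vec t$ modulo $\lat$ into the basis parallelepiped and use compactness to produce infinitely many distinct vectors of the form $\vec y - k\vec t$ of norm $<r$, then write $\lat+\Z\vec t$ as the finite union $\bigcup_{j<s}(\lat+j\vec t)$; the only difference is that you pigeonhole into finitely many cells of diameter $<r$, while the paper uses accumulation to get arbitrarily short nonzero differences. One blemish: your opening claim that $\lat$ has a fundamental domain of $\ell_p$-diameter less than $r$ is false in general (e.g., $\lat=\Z$ with $r<1$), but your actual argument never uses it, since it only partitions the parallelepiped into small cells, so simply delete that claim.
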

\begin{proof}
Let $\lat=\lat(\basis)$, $S=[0,1)^n\subseteq \R^n$, define $\mathcal{P}(\lat):=\basis S$. For any $\vec{x}\in \mathrm{span}(\lat)$, it is easy to see that there exist unique $\vec{v}\in\lat,\vec{p}\in\mathcal{P}(\lat)$, such that $\vec{x}=\vec{v}+\vec{p}$. We write $\vec{p}=\vec{x}\mod \lat$. 

Now, if for all integer $z\ge 1$ we have $z\vec{t}\notin\lat$, let $Z:=\{z\vec{t}\mod\lat:z\in\Z_{\ge 1}\}$. Then $\bm{0}\notin Z$, and for $1\le k<l$ we must have $k\vec{t}\mod\lat\ne l\vec{t}\mod\lat$, otherwise $(l-k)\vec{t}\in\lat$. Then $Z\subset \overline{\mathcal{P}(\lat)}$ is an infinite set inside a compact set, thus for all $\eps>0$, there exist $1\le h<j$ such that $\|(j\vec{t}\mod\lat)-(h\vec{t}\mod\lat)\|_p<\eps$. Write $j\vec{t}=\vec{v}_j+\vec{p}_j,h\vec{t}=\vec{v}_h+\vec{p}_h$ where $\vec{v}_j,\vec{v}_h\in\lat,\vec{p}_j,\vec{p}_h\in \mathcal{P}(\lat)$. Since $(j-h)\vec{t}\notin\lat$, $0<\|(j-h)\vec{t}-(\vec{v}_j-\vec{v}_h)\|_p=\|\vec{p}_j-\vec{p}_h\|_p<\eps$. This contradicts $A$ being finite. So $z\vec t\in\lat$ for some $z\in\Z$.

Then $\lat+\Z\vec t=\cup_{0\le i<z}(\lat+i\vec t)$ is discrete and additive, and hence it is a lattice.
\end{proof}
The following concentration bound will be used in our sparsification.

\begin{lemma}[Chebyshev concentration]\label{lem:chebyshev}
Let $q$ be a prime, $\vec h_1,\dots,\vec h_G\in\Z_q^n\setminus\{\bm0\}$ be pairwise linearly independent, i.e.,
$\vec h_i\not\equiv c\vec h_j\pmod q$ for all $i\neq j$ and $c\in\Z_q$. Let $\vec a$ be uniform over $\Z_q^n$. Set
\[
X:=\big|\{i\in[G]:\langle\vec h_i,\vec a\rangle\equiv0\pmod q\}\big|.
\]
Then $\E[X]=\frac{G}{q}$ and $\Var[X]= \frac{G}{q}(1-\frac1q)$. Consequently, for every $\delta>0$,
\[
\Pr[X\le(1-\delta)G/q]
\le\frac{1-1/q}{\delta^2G/q}<\frac{q}{\delta^2G}.
\]
\end{lemma}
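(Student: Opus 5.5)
We write $X = \sum_{i=1}^G X_i$, where $X_i := \mathbf{1}[\langle \vec h_i, \vec a\rangle \equiv 0 \pmod q]$. We then compute the first two moments using only the distribution of linear forms of a uniform $\vec a \in \Z_q^n$, and finish with Chebyshev's inequality.

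\textbf{Step 1: marginals.} For each $i$, since $\vec h_i \neq \bm 0$ over the field $\Z_q$, the map $\vec a \mapsto \langle \vec h_i, \vec a\rangle$ is a surjective linear map $\Z_q^n \to \Z_q$. Hence $\langle \vec h_i,\vec a\rangle$ is uniform on $\Z_q$. This gives $\Pr[X_i=1]=1/q$, so $\E[X]=G/q$.

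\textbf{Step 2: pairwise independence.} For $i\neq j$, pairwise linear independence says $\vec h_i,\vec h_j$ are linearly independent over $\Z_q$. (A dependence $c_1\vec h_i + c_2\vec h_j=\bm 0$ with, say, $c_1 \ne 0$ would give $\vec h_i = -c_1^{-1}c_2\vec h_j$, contradicting the hypothesis; and if $c_1 = 0$, then $c_2 \neq 0$ forces $\vec h_j = \bm 0$, which is excluded.) So the map $\vec a\mapsto(\langle\vec h_i,\vec a\rangle,\langle\vec h_j,\vec a\rangle)$ is a surjective linear map onto $\Z_q^2$, and its output is uniform on $\Z_q^2$. Therefore $\Pr[X_i=X_j=1]=1/q^2=\Pr[X_i=1]\Pr[X_j=1]$, so $\mathrm{Cov}(X_i,X_j)=0$. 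It follows that
\[
\Var[X]=\sum_{i=1}^G\Var[X_i]=G\cdot\frac1q\Bigl(1-\frac1q\Bigr).
\]

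\textbf{Step 3: Chebyshev.} The event $X\le(1-\delta)G/q$ implies $|X-\E X|\ge \delta G/q$. Chebyshev's inequality then gives
\[
\Pr\bigl[X\le(1-\delta)G/q\bigr]\le \frac{\Var[X]}{\delta^2G^2/q^2}=\frac{(G/q)(1-1/q)}{\delta^2G^2/q^2}=\frac{1-1/q}{\delta^2G/q},
\]
which is strictly less than $q/(\delta^2G)$ because $1-1/q<1$.

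No step is a serious obstacle. The only point needing care is Step 2: we must translate the hypothesis ``$\vec h_i\not\equiv c\vec h_j$ for all $c$'' into genuine linear independence. This relies on excluding zero vectors and on $q$ being prime, so that $\Z_q$ is a field and surjectivity of the joint map onto $\Z_q^2$ follows from rank $2$.
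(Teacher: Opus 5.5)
Your proposal is correct and follows essentially the same route as the paper: indicator decomposition, marginals equal to $1/q$, pairwise linear independence giving $\E[X_iX_j]=1/q^2$ and hence the exact variance, then Chebyshev. You additionally spell out why the hypothesis $\vec h_i\not\equiv c\vec h_j$ (together with nonzeroness and $q$ prime) yields genuine linear independence and uniformity on $\Z_q^2$, which the paper leaves implicit.
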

\begin{proof}
Let $X_i:=\bm{1}\{\langle\vec h_i,\vec a\rangle\equiv0\pmod q\}$.
Then $\E[X_i]=\Pr[\langle\vec h_i,\vec a\rangle\equiv0\pmod q]=\frac1q$, so $\E[X]=G/q$. Pairwise linear independence gives $\E[X_iX_j]=\frac1{q^2}$ for $i\ne j$. Hence $\Var[X]= \frac{G}{q}(1-\frac1q)$. The result then follows from Chebyshev's inequality.
\end{proof}

The next two technical lemmas imply a large number of short vectors given some short vectors with certain properties. 

\begin{lemma}\label{lem: short vec not in qL}
For a lattice $\lat$, $p\ge 1$ and $r>0$, if $\bm{0}\ne \vec{y}\in q\lat$ and $\|\vec{y}\|_p\le r$, then we have 
$$N^o_p(\lat,r)\ge 2q-1.$$ 
\end{lemma}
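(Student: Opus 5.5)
We exhibit $2q-1$ explicit lattice vectors of norm strictly less than $r$. Write $\vec{y} = q\vec{x}$ with $\vec{x} \in \lat \setminus \{\bm{0}\}$; this is possible since $\vec{y} \in q\lat$ and $\vec{y} \neq \bm{0}$. Consider the multiples $j\vec{x}$ for integers $j$ with $|j| \le q-1$, i.e., $j \in \{-(q-1), \ldots, q-1\}$.

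These are $2q-1$ vectors in $\lat$, and they are pairwise distinct because $\vec{x} \neq \bm{0}$. By homogeneity of the $\ell_p$ norm,
\[
\|j\vec{x}\|_p = \frac{|j|}{q}\,\|\vec{y}\|_p \le \frac{q-1}{q}\, r < r
\; ,
\]
where the last step uses $r > 0$. So each of the $2q-1$ vectors lies in the open ball of radius $r$, and hence $N^o_p(\lat,r) \ge 2q-1$.

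The zero vector ($j=0$) is included in the count, consistent with the definition of $N^o_p$ as counting all of $S$ inside the open ball. There is no real obstacle. The only point needing care is strictness: the hypothesis only gives $\|\vec{y}\|_p \le r$, so we must stop at $|j| \le q-1$ rather than $|j| \le q$, since $q\vec{x} = \vec{y}$ itself might have norm exactly $r$.
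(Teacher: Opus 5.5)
Your proof is correct and is the same argument as the paper's: the paper likewise lists $\bm{0}, \pm\vec{y}/q, \dots, \pm(q-1)\vec{y}/q$ as $2q-1$ distinct lattice vectors of norm less than $r$. You also correctly stop at $|j| \le q-1$ to get the strict inequality.
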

\begin{proof}
$\bm{0},\pm\vec{y}/q,\pm 2\vec{y}/q,\dots,\pm(q-1)\vec{y}/q$ are distinct vectors in $\lat$ with norm less than $r$, which proves the lemma.
\end{proof}

\begin{lemma}\label{lem: equal norm not independent}
For a lattice $\lat$, $p\ge 1$, $r>0$, and lattice vectors $\vec{v}, \vec{u} \in \lat$ with $\|\vec{v}\|_p = \|\vec{u}\|_p =r$ and $\vec{v} \neq \pm \vec{u}$. Let $q\ge 3$ be an odd integer. If $\vec{v}\equiv k\vec{u}\pmod{q\lat}$ for some integer $k$, then 
$$N^o_p(\lat,r)\ge q.$$ 
\end{lemma}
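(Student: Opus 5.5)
We write $\vec{v} = k\vec{u} + q\vec{w}$ with $\vec{w}\in\lat$. The plan is to exhibit $q$ distinct lattice vectors of norm strictly less than $r$. The natural candidates are the vectors $(\vec{v} - j\vec{u})/q$-type combinations, but these need not lie in $\lat$. Instead, for each integer $j$ we use the vector
\[
\vec{x}_j := \frac{j\vec{v} - c_j\vec{u}}{q},
\]
where $c_j$ is chosen so that $c_j \equiv jk \pmod q$; then $j\vec{v} - c_j\vec{u} \equiv (jk - c_j)\vec{u} \equiv \vec{0} \pmod{q\lat}$, so $\vec{x}_j\in\lat$. We take $j\in\{0,1,\dots,q-1\}$ and choose the representative $c_j$ of $jk \bmod q$ so as to keep $\vec{x}_j$ short.

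\textbf{Keeping the vectors short.} By the triangle inequality, $\|\vec{x}_j\|_p \le (|j| + |c_j|)r/q$. This alone is not below $r$ in general, so we recenter: let $j$ range over $\{-(q-1)/2,\dots,(q-1)/2\}$ (possible since $q$ is odd), and pick $c_j$ in the same symmetric range. Then $|j|,|c_j|\le (q-1)/2$, so $|j|+|c_j| \le q-1$ and
\[
\|\vec{x}_j\|_p \le \frac{q-1}{q}\,r < r.
\]
This gives $q$ vectors $\vec{x}_j$, one for each $j$ in the symmetric range, all of norm less than $r$.

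\textbf{Distinctness.} Suppose $\vec{x}_j = \vec{x}_{j'}$ with $j\ne j'$. Then $(j-j')\vec{v} = (c_j - c_{j'})\vec{u}$, where $0<|j-j'|\le q-1$. So $\vec{v}$ and $\vec{u}$ are parallel, say $\vec{v} = \lambda\vec{u}$ with $\lambda\in\mathbb{Q}$. Since $\|\vec{v}\|_p=\|\vec{u}\|_p=r>0$, we get $|\lambda|=1$, hence $\vec{v}=\pm\vec{u}$, contradicting the hypothesis. Hence the $q$ vectors are distinct, and $N^o_p(\lat,r)\ge q$.

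The main obstacle is the norm bound, and the fix is the symmetric choice of representatives above. The only other point to check is that $\vec{x}_j\in\lat$ regardless of the chosen representative, which follows from the congruence modulo $q\lat$. Strictness of the inequality $\|\vec{x}_j\|_p<r$ is automatic because $(q-1)/q<1$.
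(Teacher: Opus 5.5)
Your proposal is correct and is essentially the paper's own proof. The paper also takes $a$ in the symmetric range $\{-(q-1)/2,\dots,(q-1)/2\}$ and picks a symmetric representative $b_a \equiv -ak \pmod q$, which is your $-c_j$. It then notes that $(a\vec v + b_a\vec u)/q = a\vec w + c_a\vec u \in \lat$ has $\ell_p$ norm at most $\frac{q-1}{q}r < r$, and gets distinctness because $\vec v,\vec u$ have the same nonzero norm and $\vec v\neq\pm\vec u$, so they are linearly independent.
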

\begin{proof}Suppose $\vec{v}=k\vec{u}+q\vec{w}$ for some $k\in\Z,\vec{w}\in \lat$. For each $a\in \{-(q-1)/2,\dots,(q-1)/2\}$, choose $b_a\in\{-(q-1)/2,\dots,(q-1)/2\}$ such that $b_a=-ak+ c_aq$ for some integer $c_a$. Set 
$$
    \vec{s}_a=\frac{a\vec{v}+b_a\vec{u}}{q}=\frac{ak\vec{u}+aq\vec{w}-ak\vec{u}+c_aq\vec{u}}{q}=a\vec{w}+c_a\vec{u}\in \lat.
$$
But we have $\|\vec{s}_a\|_p\le \frac1q (|a|\|\vec{v}\|_p+|b_a|\|\vec{u}\|_p)=\frac{|a|+|b_a|}{q}r<r$. Now for $-(q-1)/2\le a<a'\le (q-1)/2$, if $\vec{s}_a=\vec{s}_{a'}$, this implies $a\vec{v}+b_a\vec{u}=a'\vec{v}+b_{a'}\vec{u}$. But $0<\|\vec{v}\|_p=\|\vec{u}\|_p$ and $\vec{v}\ne\pm\vec{u}$, so they are linearly independent, in contradiction to $a<a'$. This proves $\{\vec{s}_a\}_{-(q-1)/2\le a\le (q-1)/2}$ contains $q$ distinct vectors in $\lat$ with norm less than $r$.
\end{proof}

Now, we are ready to prove the main sparsification results.
\begingroup
\renewcommand{\thetheorem}{\getrefnumber{lem:sparsification-25}}
\prop*
\endgroup
\addtocounter{theorem}{-1}
\begin{proof}
Note that $q\lat\subseteq \lat' \subseteq \lat$, so that $\lat'$ has rank $n$ as claimed. \cref{lem:order-of-t} implies $s>1$ is well-defined, and $M:=\lat+\Z\vec t$ is a lattice. Observe that $s\vec t' = \basis (\vec d-s\vec z)$, and $\langle \vec d-s\vec z,\vec a\rangle = \langle \vec d-ss^*\vec d,\vec a\rangle\equiv 0\pmod q$. Hence $s\vec t'\in\lat'$, and $\lat'+\Z\vec t'$ is also a lattice.

\paragraph{Proof of~\cref{item-one-BPT25}.} Let $A_2:=\{\vec{y}-k\vec{t}:\vec{y}\in\lat,k\in\Z,0<\|\vec{y}-k\vec{t}\|_p<r\}$, then $A=|A_2|+1$. For any $\vec w\in A_2$, write $\vec w=\vec y-k\vec t,\ \vec y=\basis\vec x$ where $\vec x\in\Z^n$. We can assume $0\le k<s$ since $s\vec t\in\lat$. Note $\vec w = (\vec y-k\basis\vec z)-k\vec t'$. We next prove $\vec w\in\lat'+\Z\vec t'\iff \vec y-k\basis\vec z\in\lat'$. The converse direction is trivial. Suppose $\vec w=\vec y'-e\vec t'$ for some $\vec y'\in\lat'$, $s\vec t'\in\lat'$ implies we can assume $0\le e<s$. Then we have $\vec y-k\basis\vec z-\vec y'=(k-e)\vec t'=(k-e)(\vec t-\basis \vec z)$. Since the left hand side is in $\lat$ and $-s<(k-e)<s$, we get $k=e$, and hence $\vec y-k\basis\vec z=\vec y'\in\lat'$. We have
\begin{equation}
    \vec w\in\lat'+\Z\vec t'\iff \vec y-k\basis\vec z\in\lat'\iff \langle\vec x-k\vec z,\vec a\rangle\equiv 0\pmod q. \label{eq:bad w condition}
\end{equation}
Recall $s\vec t=\basis\vec d$, and~\cref{eq:bad w condition} is equivalent to $ \langle \vec x-ks^*\vec d,\vec a\rangle\equiv 0\pmod q$. Since $q\nmid s$, multiplying by $s$ gives equivalence to $\langle s\vec x-k\vec d,\vec a\rangle\equiv 0\pmod q$. 

We now prove $s\vec x-k\vec d\not\equiv0\pmod q$. Otherwise $s\vec w=\basis (s\vec x-k\vec d)\in q\lat\subseteq qM$. Since $\vec w\in M$ and $q\nmid s$, we have $\vec w\in qM$. But $0<\|\vec w\|_p<r$, \cref{lem: short vec not in qL} implies $A\ge 2q-1$, contradicting the assumption that $q>A$. Therefore,
\[
\Pr[\vec w\in\lat'+\Z\vec t']=\Pr[\langle s\vec x-k\vec d,\vec a\rangle\equiv 0\pmod q]=\frac1q.
\]

Since $\lat'+\Z\vec t'\subseteq \lat+\Z\vec t$, a union bound over $\vec w\in A_2$ shows
\[
\Pr[\lambda_1^{(p)}(\lat'+\Z\vec t')<r]\le \frac{A-1}{q}.
\]

\paragraph{Proof of~\cref{item-two-BPT25}.} Write $\{\vec v\in \lat:\|\vec v-\vec t\|_p=r\}=\{\vec v_1,\dots,\vec v_G\},\ \ \vec v_i=\basis\vec x_i$ for $1\le i\le G$, and define $\vec u_i=\vec v_i-\vec t$. Let $X :=N_p(\lat'-\vec t',r)$. Note that 
\[\{ \vec{v} \in \lat' \ : \ \|\vec{v}-\vec{t}'\|_p = r\}=(\{\vec v\in \lat:\|\vec v-\vec t\|_p=r\}-\basis \vec z)\cap \lat'
\; ,
\]so that $X \geq \sum_{i=1}^G \bm{1}\{\vec v_i-\basis\vec z\in\lat'\}$.

Define $\vec h_i=s\vec x_i-\vec d$, for $1\le i\le G$. We have
\begin{equation}
    \vec v_i-\basis\vec z\in\lat'\iff \langle\vec x_i-s^*\vec d,\vec a\rangle\equiv 0\pmod q\iff \langle\vec h_i,\vec a\rangle\equiv 0\pmod q. \label{eq:condition of bi}
\end{equation}

We next show $\vec h_i\not\equiv 0\pmod q$. Indeed, otherwise $\basis\vec h_i=s\vec v_i-s\vec t=s\vec u_i\in q\lat\subseteq qM$. Since $\vec u_i\in M$ and $q\nmid s$, we have $\vec u_i\in qM$, and~\cref{lem: short vec not in qL} gives a contradiction.

Suppose for $i\ne j$, $\vec h_i\equiv k\vec h_j\pmod q$ for some $k\in\Z_q$. Then $s\vec{v}_i-s\vec t-ks(\vec v_j-\vec t)=s(\vec u_i-k\vec u_j)\in qM$, so $\vec u_i-k\vec u_j\in qM$. So $\vec u_i\equiv k\vec u_j\pmod {qM}$, since they have equal norm $r$, \cref{lem: equal norm not independent} implies $A\ge q$ unless $\vec u_i=\pm\vec u_j$. For $i\ne j$, this happens only if $\vec u_i=-\vec u_j$, so $\vec v_i-\vec t=\vec t-\vec v_j$. Then $2\vec{t}\in\lat$ and $s=2$.

If $s>2$, vectors $\vec h_1,\dots,\vec h_G$ are pairwise linearly independent modulo $q$. \cref{lem:chebyshev} gives 
\[
\Pr[X\le (1-\delta)G/q]< \frac{q}{\delta^2 G}.
\]
Else when $s=2$, two vectors $\vec h_i=-\vec h_j$ always survive simultaneously. We group $\vec h_1,\dots,\vec h_G$ into $G/2$ such pairs, the same lemma implies $\Pr[X\le (1-\delta)G/q]< \frac{2q}{\delta^2 G}.$
\end{proof}

\begin{remark}
We assume that $q\nmid s$ in~\cref{lem:sparsification-25} for convenience. However, we note that a different construction works if $q\mid s$. In that case, one can sample $\vec{z} \sim \Z_q^n$ and $\vec a$ uniformly from $\{\vec x\in\Z^n_q:\langle \vec x,\vec d\rangle\equiv 0\pmod q\}$. This yields similar results for minimum distance and number of close vectors. However, in our applications (\cref{thm:large p,cor: alpha to kissing number}), $s=2$ and $q$ is a large prime. Therefore, we prove~\cref{lem:sparsification-25} only for $q\nmid s$.

Furthermore, one can also think of our choice of $\vec a,\vec z$ as following~\cite[Proposition~2.5]{conf/innovations/BennettPT22}, but conditioned on $s\vec t'\in\lat'$. This is equivalent to choosing a fixed $\vec z=s^*\vec d$ as in~\cref{lem:sparsification-25}.
\end{remark}

\section{Building exact locally dense lattices from lattices with large kissing number}
\label{app:going_backwards}

In this section, we show a simple converse to \cref{thm:kannans-embedding-formal}, that is, we show that a lattice with large kissing number implies an exact locally dense lattice. We note that a similar result was proven by Aggarwal and Stephens-Davidowitz. Namely, they showed how to build a certain kind of locally dense lattice from a lattice with large kissing number. (See \cite[Theorem 5.9]{AS18}, \cite[Lemma 3.6]{conf/innovations/BennettPT22}, and the discussion above.) Our result is in some sense much simpler, because we are interested in \emph{exact} locally dense lattices. On a technical level, they required $\lambda_1^{(p)}(\lat') \geq (1+\eps) \dist_p(\vec{t}',\lat')$ for some positive constant $\eps > 0$, whereas we are happy with $\eps = 0$. On the other hand, we require many lattice vectors at distance exactly $\dist_p(\vec{t}',\lat')$, while \cite{AS18,conf/innovations/BennettPT22} only required many lattice vectors within distance $(1-\eps) \lambda_1^{(p)}(\lat')$. (However, in our context, this constraint is remarkably easy to satisfy.)

\begin{lemma} \label{lem:new-KN-to-LDL}
For any $p \geq 1$ and lattice $\lat$ of rank $n$, there exists an index-two sublattice $\lat' \subsetneq \lat$ of $\lat$ and vector $\vec{t} \in \lat \setminus \lat'$ such that $(\lat', \vec{t})$ is a $(p, n, N)$-exact locally dense lattice for $N > \kappa_p(\lat)/2$.
In particular, for every $p \geq 1$ and $n \in \Z^+$ there exists a $(p, n, N)$-exact locally dense lattice for $N > \kappa_p(n)/2$.
\end{lemma}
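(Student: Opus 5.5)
Let $\lambda := \lambda_1^{(p)}(\lat)$ and let $S := \{\vec{x}\in\lat : \|\vec{x}\|_p = \lambda\}$, so $|S| = \kappa_p(\lat)$ and $S = -S$. The plan is to pick a surjective homomorphism $\phi:\lat\to\Z/2\Z$ and set $\lat' := \ker\phi$. This is an index-two sublattice, hence of rank $n$, and $2\lat\subseteq\lat'$. We then choose as target some $\vec{t}\in S$ with $\phi(\vec{t})=1$. The closest vectors to $\vec{t}$ we aim for are $\vec{t}-\vec{x}$ for $\vec{x}\in S$ with $\phi(\vec{x})=1$. Indeed, $\vec{t}-\vec{x}\in\lat'$ and $\|(\vec{t}-\vec{x})-\vec{t}\|_p=\lambda$, and distinct $\vec{x}$ give distinct lattice vectors. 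So $N$ equals the number of $\vec{x}\in S$ with $\phi(\vec{x})=1$, and we need $\phi$ to be chosen so that this count exceeds $|S|/2$.

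To choose $\phi$, write $\phi(\basis\vec{c}) = \langle \vec{c},\vec{a}\rangle \bmod 2$ for $\vec{a}\in\{0,1\}^n\setminus\{\vec{0}\}$ and average over all $2^n-1$ such $\vec{a}$. Any $\vec{x}\in S$ is primitive: if $\vec{x}=2\vec{w}$ with $\vec{w}\in\lat$, then $\vec{w}$ would be a nonzero lattice vector of length $\lambda/2$. Hence the coefficient vector of $\vec{x}$ is nonzero mod $2$, and exactly $2^{n-1}$ of the nonzero $\vec{a}$ give $\phi(\vec{x})=1$. The fraction of good $\vec{a}$ for each $\vec{x}$ is therefore $2^{n-1}/(2^n-1) > 1/2$. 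By averaging, some $\vec{a}$ has more than $|S|/2$ elements $\vec{x}\in S$ with $\phi(\vec{x})=1$. This is the only step needing any care, and it is exactly where the strict inequality $N>\kappa_p(\lat)/2$ comes from. (If $S$ is empty there is nothing to prove beyond noting $\vec{t}$ exists; $S$ is nonempty for $n\ge 1$.) Fix this $\phi$ and any $\vec{t}\in S$ with $\phi(\vec{t})=1$. Then $\vec{t}\notin\lat'$, as required.

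Next we verify the three conditions of \cref{def:locally-dense-lat}. First, every vector of $\lat'-\vec{t}$ lies in $\lat\setminus\{\vec{0}\}$ because $\vec{t}\notin\lat'$, so it has norm at least $\lambda$. Thus $\dist_p(\vec{t},\lat') = \lambda$, and it is attained by the more than $|S|/2$ vectors above. Second, $\lambda_1^{(p)}(\lat')\ge\lambda = \dist_p(\vec{t},\lat')$, since $\lat'\subseteq\lat$. Third, for an integer $k$: if $k$ is even then $k\vec{t}\in 2\lat\subseteq\lat'$. If $k$ is odd then $\lat'-k\vec{t}=\lat'-\vec{t}$, since $(k-1)\vec{t}\in\lat'$, so $\dist_p(k\vec{t},\lat')=\dist_p(\vec{t},\lat')$.

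The "in particular" statement follows by applying this to a rank-$n$ lattice $\lat$ with $\kappa_p(\lat)=\kappa_p(n)$. Such a lattice exists since $\kappa_p(n)$ is bounded and is a maximum of integers. Combined with \cref{thm:kannans-embedding-formal}, this yields \cref{thm:LDL-KN-equivalence}, because the lattice $\lat'+\Z\vec{t}$ has rank $n+1$.
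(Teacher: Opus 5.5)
Your proof is correct and is essentially the paper's argument: a uniformly random nonzero $\vec a\in\{0,1\}^n$ defines the index-two sublattice, primitivity of shortest vectors gives each $\vec x\in S$ probability $2^{n-1}/(2^n-1)>1/2$ of lying in the nontrivial coset, and averaging finishes. The only differences are cosmetic. You take $\vec t\in S$ rather than an arbitrary vector of $\lat\setminus\lat'$, which is harmless since $\lat'-\vec t=\lat\setminus\lat'$ either way. You also verify the second and third conditions of the definition explicitly, whereas the paper leaves them implicit.
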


\begin{proof}
Let $S := \set{\vec{v} \in \lat : \norm{\vec{v}}_p = \lambda_1^{(p)}(\lat)}$ be the set of the shortest non-zero vectors of $\lat$.

We note that it suffices to prove that there exist an index-two sublattice $\lat' \subsetneq \lat$ of $\lat$ and a vector $\vec{t} \in \lat \setminus \lat'$ such that the coset $\lat' - \vec{t} = \lat \setminus \lat'$ satisfies $\card{S \cap (\lat' - \vec{t})} > \kappa_p(\lat)/2$. 
Indeed, notice that $\card{\CVP_p(\vec{t}, \lat')} = \card{S \cap (\lat' - \vec{t})}$ as long as $S \cap (\lat' - \vec{t})$ is non-empty. So, if $\card{S \cap (\lat' - \vec{t})} > \kappa_p(\lat)/2$ then $(\lat', \vec{t})$ is a $(p, n, N)$-exact locally dense lattice for $N>\kappa_p(\lat)/2$.

To that end, let $\basis$ be a basis of $\lat$ and sample $\vec{a} \sim \bit^n \setminus \set{\vec{0}}$ uniformly at random. Let
\[
\lat' := \set{\basis \vec{x} : \vec{x} \in \Z^n, \iprod{\vec{x}, \vec{a}} \equiv 0 \Mod{2}} \ \text{,}
\]
and let $\vec{t} = \basis \vec{z} \in \lat \setminus \lat'$. (Notice that such a vector $\vec{t}$ exists since $\vec{a} \not\equiv \vec{0} \Mod{2}$.)

Let $\vec{v} = \basis\vec{y} \in S$ and notice that $\vec{y} \not\equiv \vec{0} \Mod{2}$ (since otherwise $\vec{v} = \vec{0}$ or $\vec{v}/2 \in \lat$ and $\vec{v}/2$ is shorter than $\vec{v}$). We then have that
   \[
    \Pr_{\vec{a}}[\vec{v} \in \lat'-\vec t ] = \Pr_{\vec{a}}[\langle\vec y,\vec a\rangle\equiv 1 \Mod{2}]> 1/2\ \text{.}
    \]
So, by linearity of expectation, $\E_{\vec a}[\ \card{S \cap (\lat' - \vec{t})}\ ]>|S|/2=\kappa_p(\lat)/2$, hence there exists a pair $(\lat',\vec t)$ satisfying $\card{S \cap (\lat' - \vec{t})} >\kappa_p(\lat)/2$, as needed. 
\end{proof}

We then obtain a proof of \cref{thm:LDL-KN-equivalence}, which gives a near equivalence between lattices with large kissing number and exact locally dense lattices.

\begin{proof}[Proof of \cref{thm:LDL-KN-equivalence}]
Combine \cref{thm:kannans-embedding-formal,lem:new-KN-to-LDL}.
\end{proof}

\section{Refinement of Wan's theorem}
\label{app:Wan_stuff}

In this section, we show how to extend the proof of~\cite[Theorem~6.3]{wan2026nphardnesssvpeuclideanspace} to $k=\Theta(q^{1/4})$.

\corRS*
\begin{proof}\label{app}
 Observe that although the parameters we choose are not covered directly by the statement of \cite[Theorem~6.3]{wan2026nphardnesssvpeuclideanspace}, the same proof applies. Indeed, by \cite[(6.3.2)]{wan2026nphardnesssvpeuclideanspace},
\[
     |S_2(\vec y,2k)|\cdot (2k)!>q^{(k+2)/2}\binom{2k}{2}
    \big(q^{(k-2)/2}-(2k)^{2k}\big).
\]
Moreover,
\[
    \frac{(2k)^{2k}}{q^{(k-2)/2}}=    q\left(\frac{(2k)^2}{q^{1/2}}\right)^k
    \le q\left(\frac{\alpha^2q^{1/2}}{q^{1/2}}\right)^k= q\alpha^{2k}=o(1),
\]
since $0<\alpha<1$. Hence,
\[
    q^{(k-2)/2}-(2k)^{2k}=(1-o(1))q^{(k-2)/2},
\]
and therefore $|S_2(\vec y,2k)|\cdot (2k)!\ge(1-o(1))\binom{2k}{2}q^k$. Using $(2k)!\le (2k)^{2k}\le \alpha^{2k}q^{k/2}\le q^{k/2}$, we obtain
\[
    |S_2(\vec y,2k)|\ge q^{k/2}=2^{\Omega(q^{1/4}\log q)}.
\]
Thus the proof of \cite[Theorem~6.3]{wan2026nphardnesssvpeuclideanspace} extends to the endpoint \(k=\Theta(q^{1/4})\).
\end{proof}

\section{A brief note on why a different construction in \texorpdfstring{\cite{AS18}}{[AS18]} did not work for us}
\label{app:stuff_that_doesn't_work}
Here, we briefly discuss a different way that one might try to build exact locally dense lattices (and thus lattices with large kissing number) in our context. We explain that we were unable to make this idea work, though we do not know how to rule it out. 

Specifically, we note that \cite{AS18} actually has two slightly different constructions of locally dense lattices from $\Z^n$. The first is the one that we discuss in \cref{sec:Z^n-local-density} in which the target vector is simply $\vec{t} := (1/2,\ldots, 1/2)^\top \in \R^n$. However, as \cite{AS18} noted, this construction fails for $p \leq p_0 \approx 2.1397$ specifically because for such $p$, $N_p(\Z^n, n^{1/p}/2) > 2^{n-o(n)}$.

\cite{AS18} therefore presented a different construction of a locally dense lattice (see \cite[Lemma 5.4]{AS18}) that still uses $\Z^n$ as the lattice but takes the target vector $\vec{t}$  to be $(\eps,\ldots, \eps)^\top \in \R^n$ for some small constant $\eps > 0$ (depending on $p$). For all $p > 2$, they show that with this construction there exist constants $0 < C_p < C_p'$ such that $N_p(\Z^n-\vec{t},C_p n^{1/p}) > 2^{\Omega_p(n)} \cdot N_p(\Z^n, C_p' n^{1/p})$. In other words, there are exponentially many more lattice points in a ball centered around $\vec{t}$ than there are in a ball around the origin (even if the radius of the ball around the origin is slightly larger than the ball around $\vec{t}$). (See also~\cite{aggarwalMindGapNot2026}, which has a similar construction working with $\vec{t} = (1/2,\dots, 1/2)^\top$ with a varying radius.)

This construction is sufficient for one of their purposes (proving Gap-ETH hardness of $\SVP_p$), but we do not see how to use it to build an \emph{exact} locally dense lattice. It is natural to try to sparsify such a construction to obtain a locally dense lattice, like we do in \cref{thm:sparsification} (and as \cite{AS18} do implicitly). However, it seems that for such a technique to yield an \emph{exact} locally dense lattice, it is necessary for there to be some fixed radius $r$ such that the number of points in $\lat$ at distance \emph{exactly} $r$ from $\vec{t}$ is exponentially larger than the number of \emph{closer} lattice points to $\vec{t}$ (in addition to the short lattice vectors and the close vectors to $k\vec{t}$). And, we simply do not see how to get around this obstruction using this construction. 

In contrast, the construction using $\vec{t} = (1/2,\ldots, 1/2)^\top$ presented in \cref{sec:Z^n-local-density} and our generalization using $\mathcal{M}_m$ in \cref{sec:Dm_rotation_thing} work with a radius $r$ that is simply equal to $\dist_p(\vec{t},\lat)$. In this case, one of course does not need to worry about any lattice vectors with distance from $\vec{t}$ strictly less than $r$, since they do not exist. (One certainly still needs to worry about short vectors and close vectors to $k\vec{t}$.)
\end{document}